\newtheorem{theorem}{Theorem}[section]
\newtheorem{definition}[theorem]{Definition}
\newtheorem{proposition}[theorem]{Proposition}
\newtheorem{lemma}[theorem]{Lemma}
\newtheorem{corollary}[theorem]{Corollary}
\newtheorem{remark}[theorem]{Remark}
\numberwithin{equation}{section}
\newcommand{\Z}{\mathbb{Z}}
\newcommand{\R}{\mathbb{R}}
\newcommand{\C}{\mathbb{C}}
\newcommand{\B}{\mathbb{B}}
\newcommand{\Ebb}{\mathbb{E}}
\newcommand{\Lbb}{\mathbb{L}}
\newcommand{\Sbb}{\mathbb{S}}
\newcommand{\Tbb}{\mathbb{T}}
\newcommand{\Acal}{\mathcal{A}}
\newcommand{\Bcal}{\mathcal{B}}
\newcommand{\Ccal}{\mathcal{C}}
\newcommand{\Fcal}{\mathcal{F}}
\newcommand{\Hcal}{\mathcal{H}}
\newcommand{\Ical}{\mathcal{I}}
\newcommand{\Mcal}{\mathcal{M}}
\newcommand{\Pcal}{\mathcal{P}}
\newcommand{\norm}[2]{\left\| #1 \right\|_{#2}}
\newcommand{\dd}{\;{\rm d}}
\newcommand{\de}{{\rm d}}
\DeclareMathOperator{\id}{id}
\DeclareMathOperator{\Leb}{Leb}
\DeclareMathOperator{\Liouv}{Liouv}
\DeclareMathOperator{\Jac}{Jac}
\DeclareMathOperator{\GL}{GL}
\DeclareMathOperator{\SL}{SL}
\DeclareMathOperator{\Supp}{Supp}
\DeclareMathOperator{\Card}{Card}
\DeclareMathOperator{\Cov}{Cov}
\DeclareMathOperator{\Vol}{Vol}
\DeclareMathOperator{\VectSpan}{span}
\title{Keplerian shear in ergodic theory}
\author{Damien THOMINE}
\date{}
\begin{document}

\maketitle

\begin{abstract}

Many integrable physical systems exhibit Keplerian shear. We look at this phenomenon 
from the point of view of ergodic theory, where it can be seen as mixing conditionally 
to an invariant $\sigma$-algebra. In this context, we give a sufficient criterion 
for Keplerian shear to appear in a system, investigate its genericity and, 
in a few cases, its speed. Some additional, non-Hamiltonian, examples are discussed.
\end{abstract}

When a celestial body is orbiting circularily around another, Kepler's third law 
asserts that the period of the orbit is proportional to the radius of the orbit 
at the power $3/2$: closer bodies complete their orbits faster. When one considers 
bodies whose size is non-negligible with respect to the radius of the orbit, 
this difference of orbital periods induces a shearing effect, called Keplerian shear~\cite{Tiscareno:2012}. 
Kelperian shear is most notable in planetary rings, for instance Saturn's. As a consequence, 
any large-scale heterogeneity of the rings is wrapped around the rings, until -- 
for large enough times -- it equidistributes radially (see Fig~\ref{fig:Saturne}): 
Keplerian shear explains the radial symmetry of large planetary rings.

\begin{figure}[h!]
\vspace{1cm}
\centering
\includegraphics[scale=0.4]{./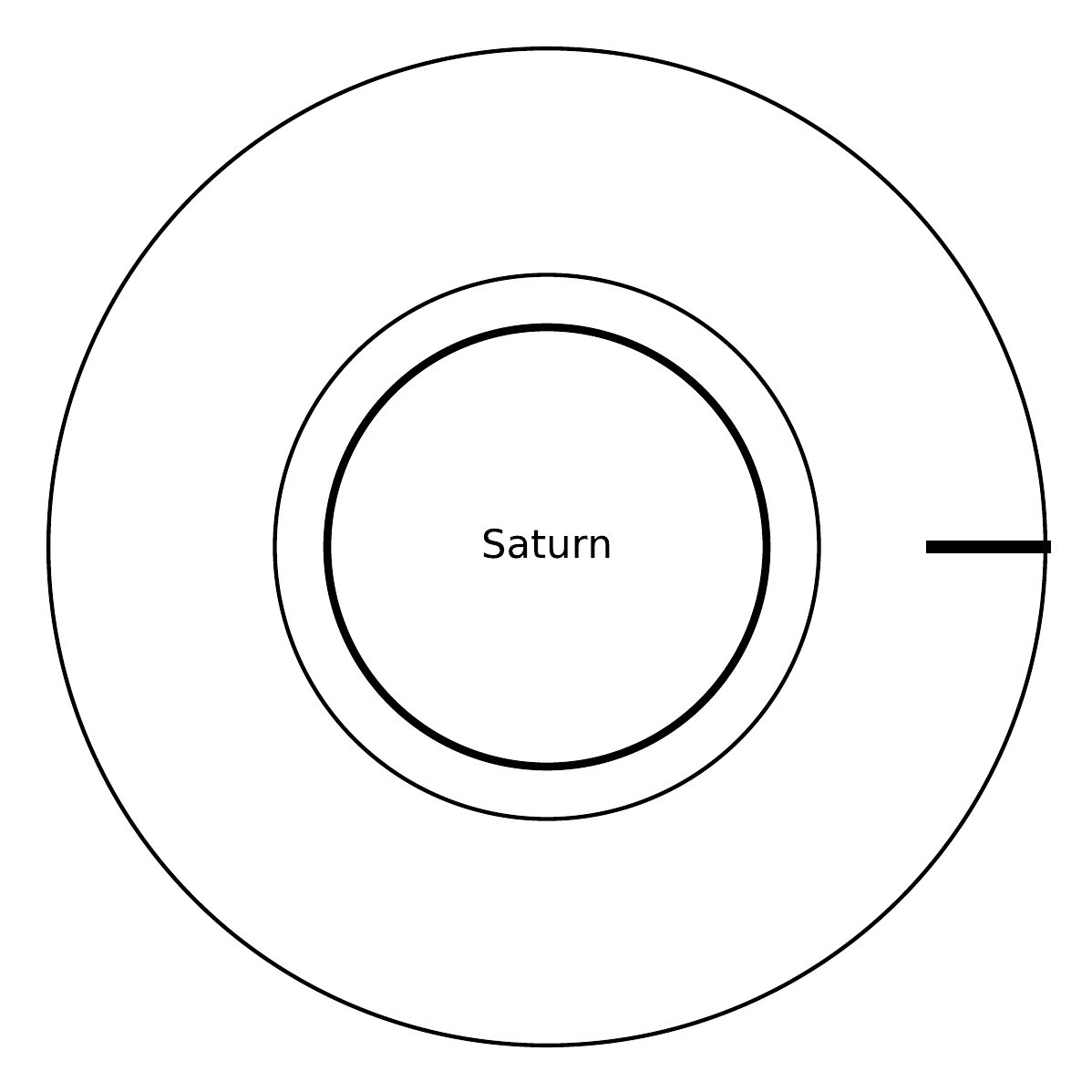}
\includegraphics[scale=0.4]{./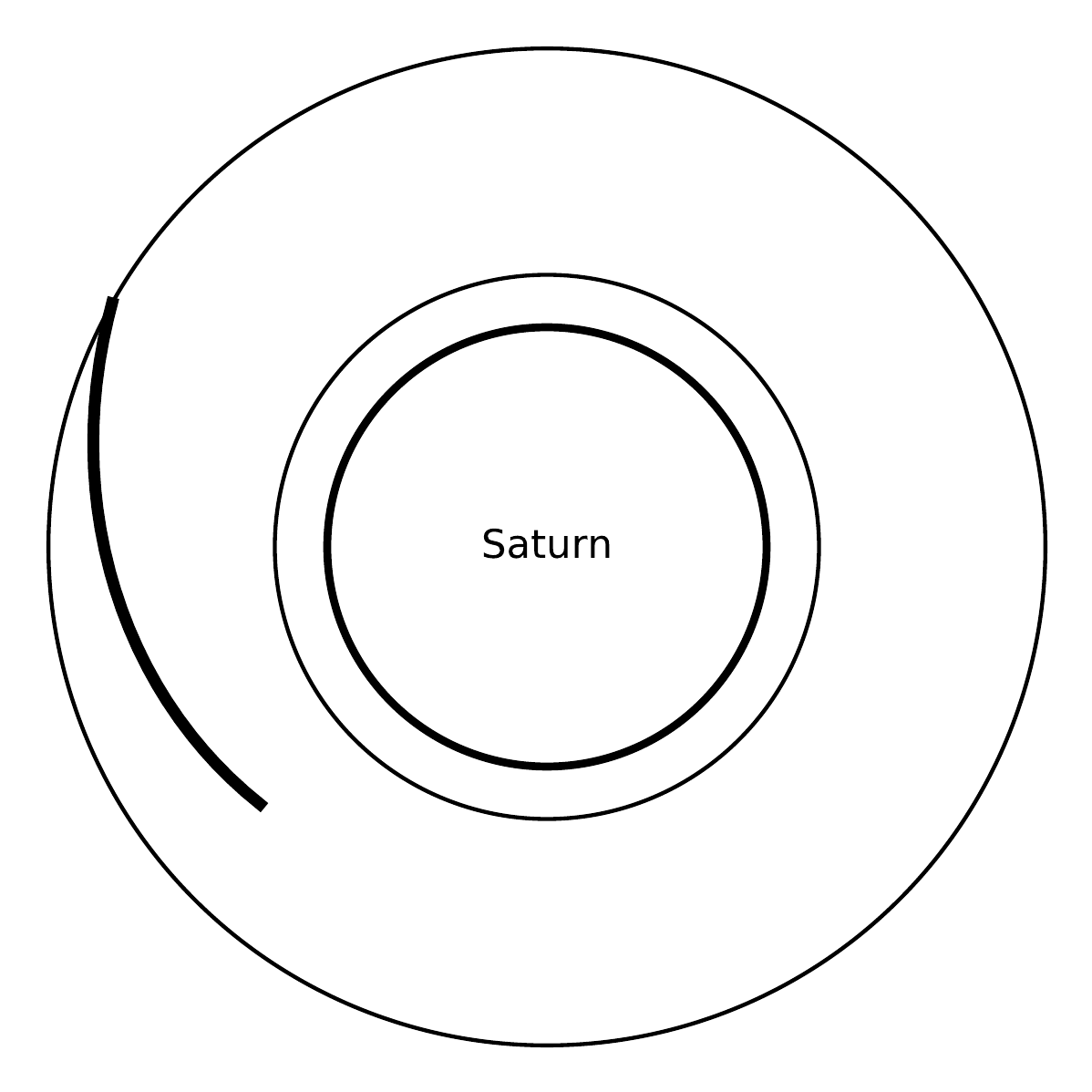}
\includegraphics[scale=0.4]{./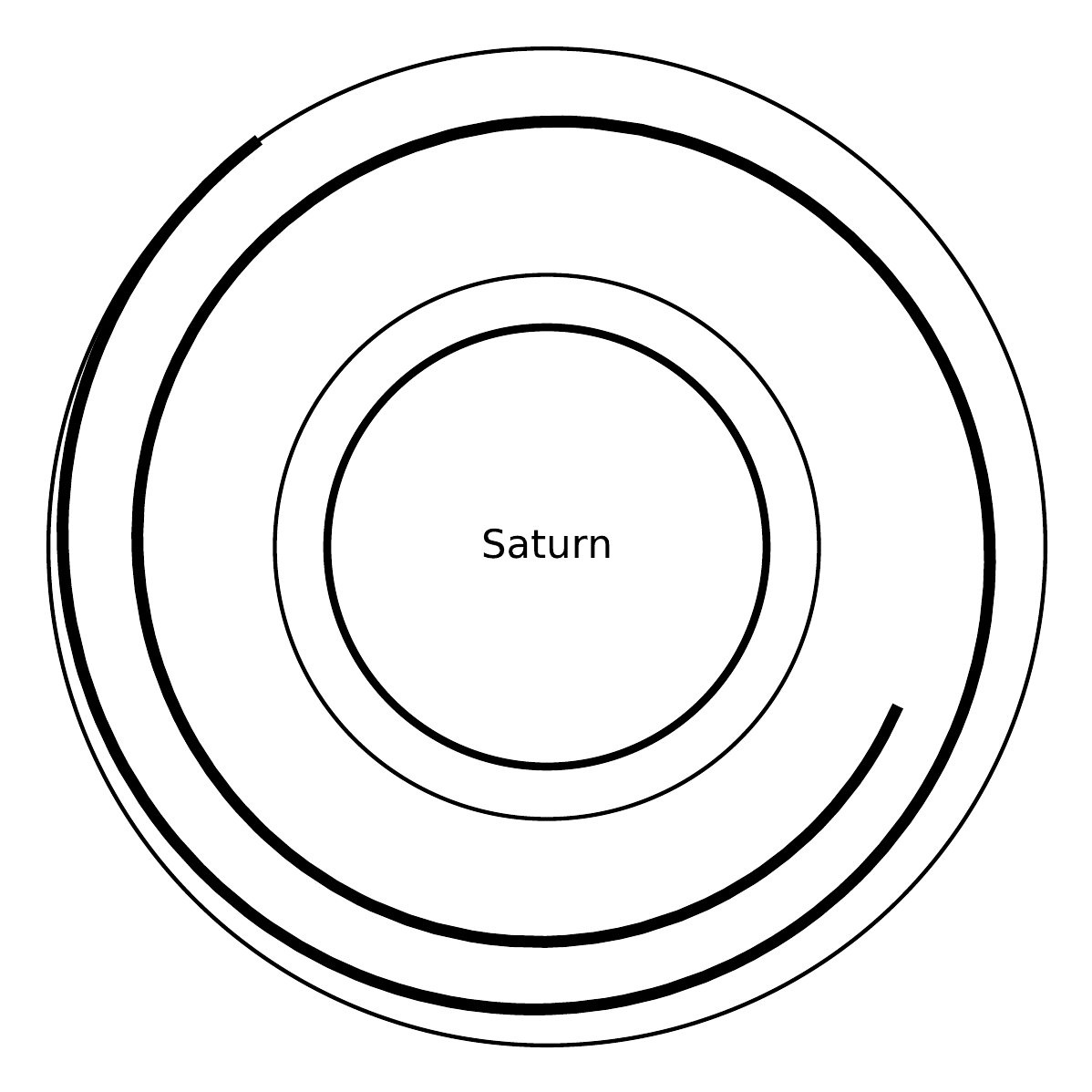}
\caption{Equirepartition of a cloud of dust in Saturn's rings. On the left: the cloud (thick black line) at initial time. 
In the middle: the same cloud, after 6 hours. On the right: the same cloud, after 48 hours.}
\label{fig:Saturne}
\end{figure}

Keplerian shear is a more general feature of many integrable Hamiltonian dynamical systems. 
Using action-angle coordinates, the phase space is foliated by invariant Lagrangian tori, 
and the dynamics of a point belonging to the phase space is conjugate to a translation on one of 
these tori. Provided that the translations on the Lagrangian tori are (in some sense) asynchronous, 
the dynamics shear the transversals to the invariant tori, so that in large time, 
densities equidistribute along the tori. In the case of planetary rings, the invariant tori 
are orbits of given radius, and the asynchronicity comes from the variation of the orbital period: 
we recover classical Keplerian shear. Other systems with Keplerian shear are the geodesic 
flow on a flat torus (see Fig~\ref{fig:Tore}), or the dynamics of a ball bouncing in a square box.

\begin{figure}[h!]
\vspace{1cm}
\centering
\includegraphics[scale=0.4]{./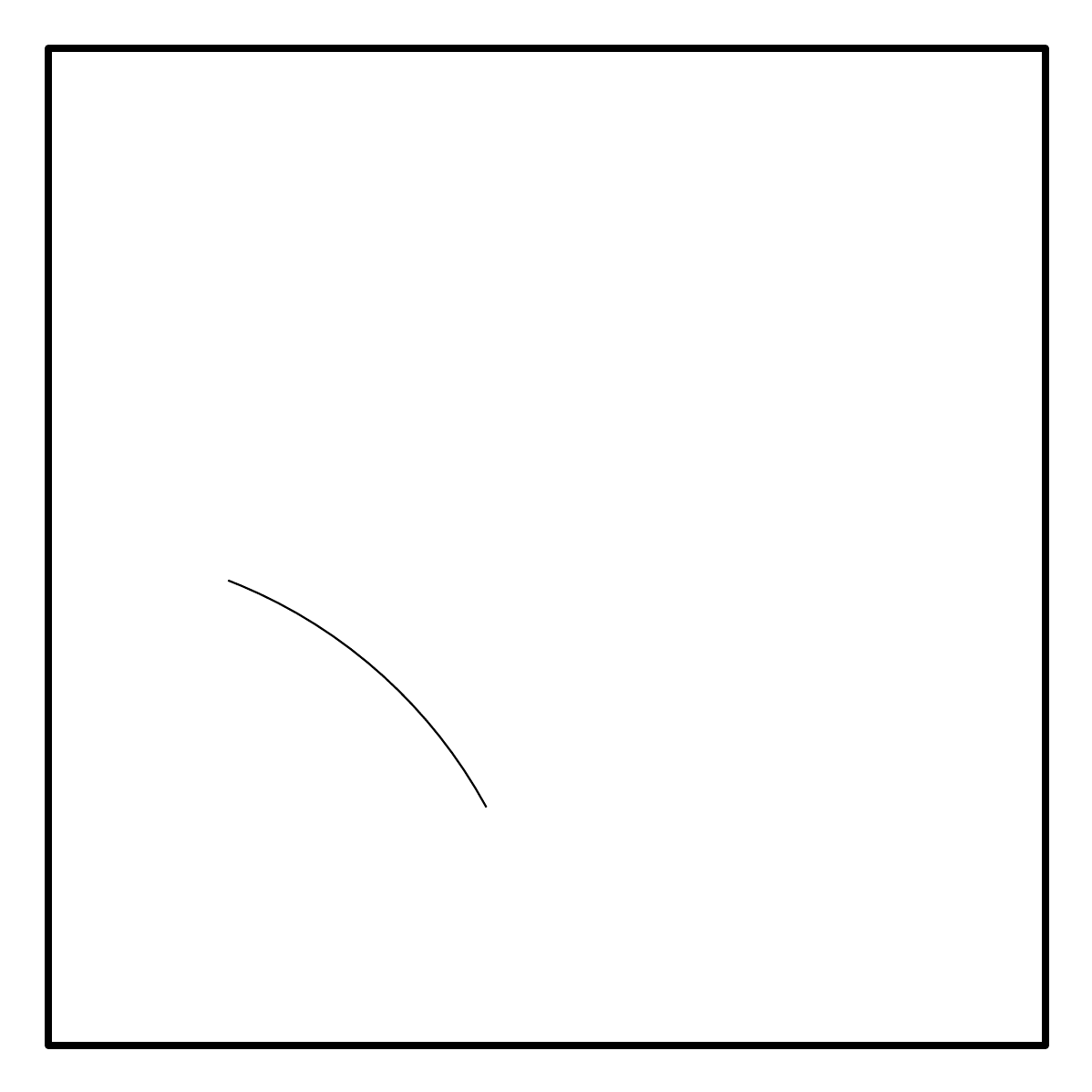}
\includegraphics[scale=0.4]{./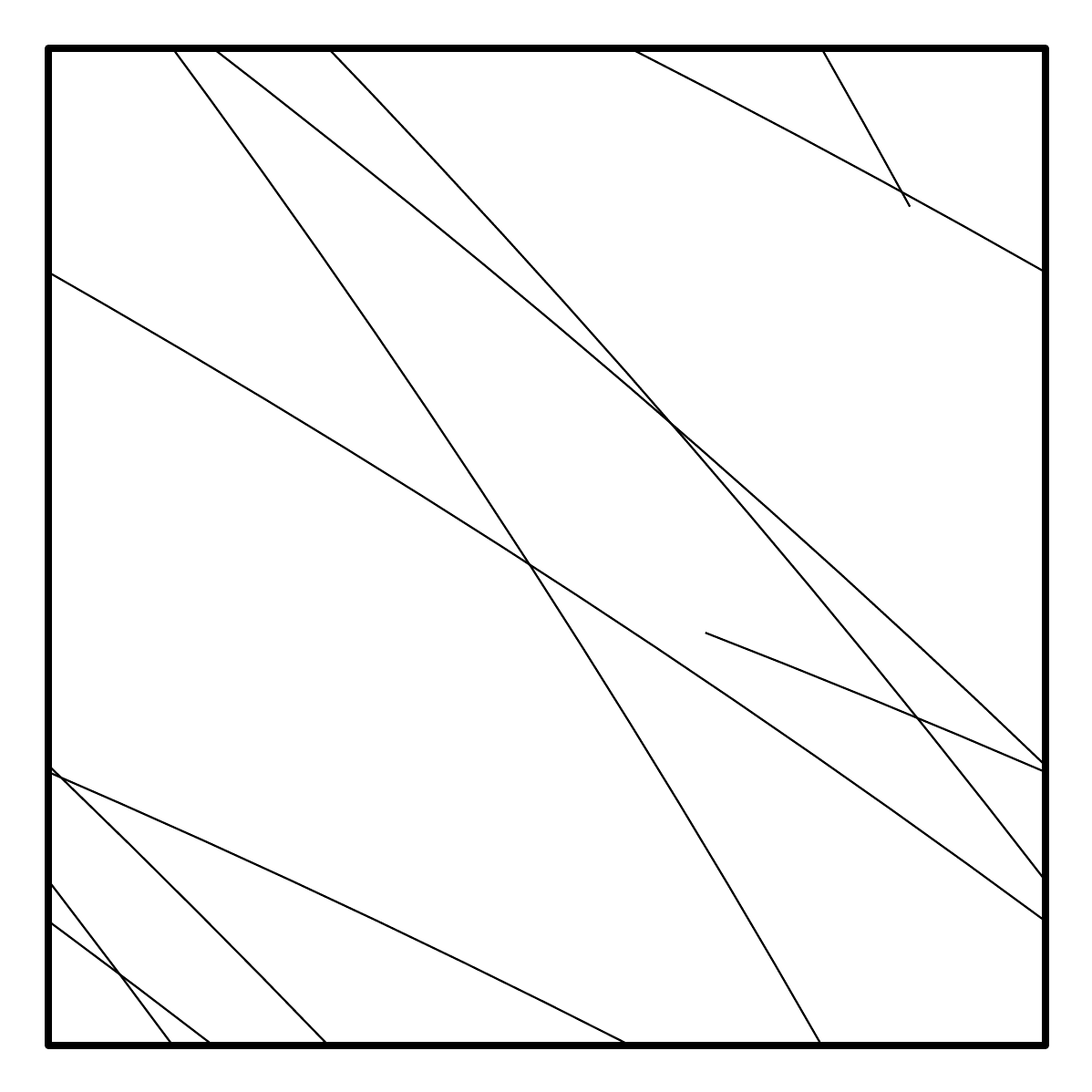}
\includegraphics[scale=0.4]{./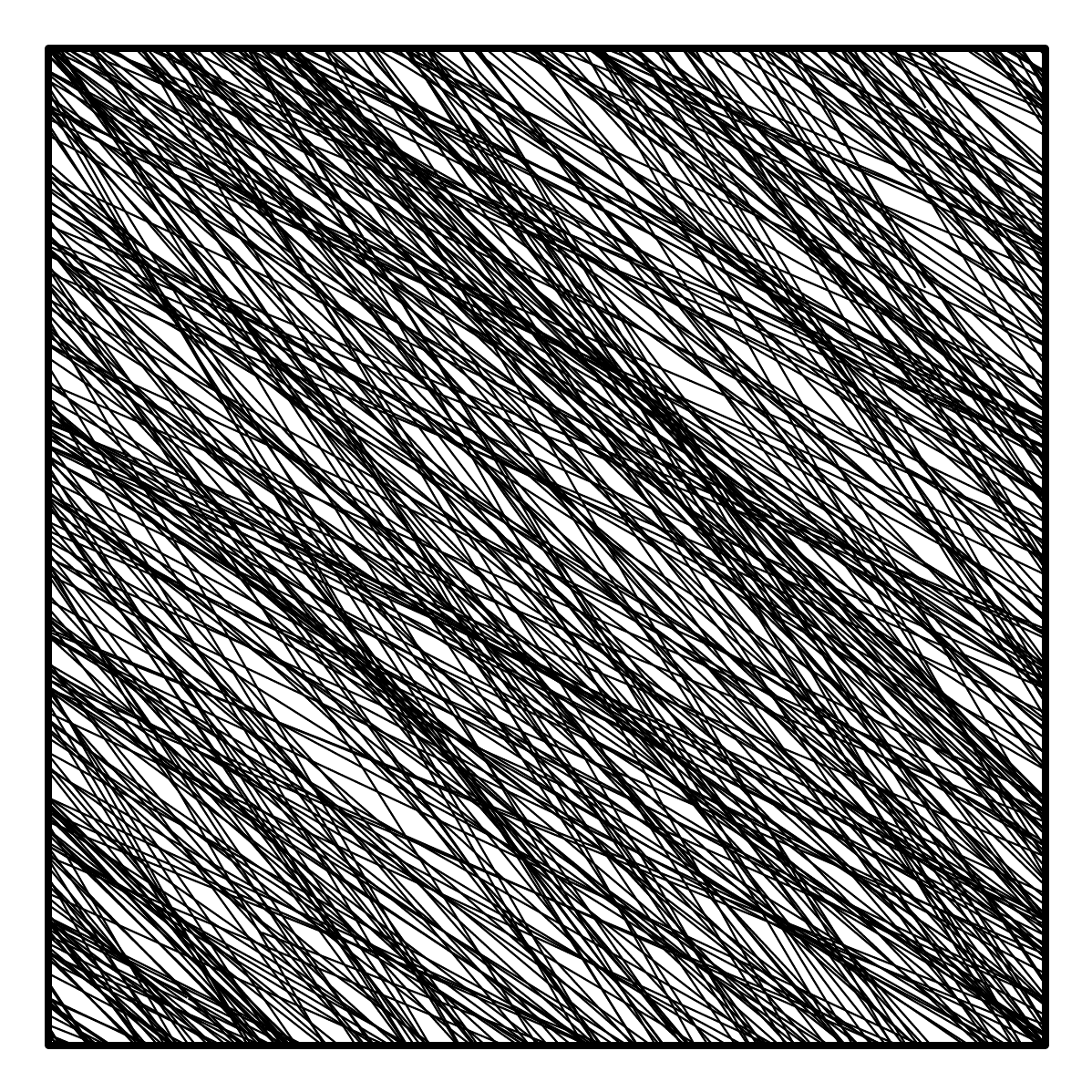}
\caption{Propagation of a wavefront at unit speed in a unit square torus. The wave starts from the corner, and propagates at unit speed. 
On the left: the wavefront at time $0.5$. In the middle: the wavefront at time $10$. On the right: the wavefront at time $500$.}
\label{fig:Tore}
\end{figure}

\smallskip

In this article, we frame Keplerian shear in the more general context of ergodic theory, 
as a conditional version of the notion of strong mixing.

\begin{definition}[Keplerian shear]\quad

A dynamical system $(\Omega, \mu, (g_t)_{t \in \R})$ which preserves a probability measure is said to exhibit 
\textit{Keplerian shear} if, for all $f \in \Lbb^2 (\Omega, \mu)$,
\begin{equation}
\label{eq:DefinitionKeplerianShear}
\lim_{t \to + \infty} f \circ g_t 
= \Ebb_\mu (f | \Ical),
\end{equation}
where $\Ical$ is the invariant $\sigma$-algebra and the convergence is for the weak topology on $\Lbb^2 (\Omega, \mu)$.
\end{definition}

Recall that a system $(\Omega, \mu, (g_t)_{t \in \R})$ is mixing if and only if, for any function 
$f \in \Lbb^2 (\Omega, \mu)$,
\begin{equation*}
\lim_{t \to + \infty} f \circ g_t 
= \int_\Omega f \dd \mu 
= \Ebb_\mu (f),
\end{equation*}
where the limit is taken in the weak topology on $\Lbb^2 (\Omega, \mu)$, so a system 
$(\Omega, \mu, (g_t)_{t \in \R})$ is mixing if and only if it is ergodic 
and exhibits Keplerian shear. As such, Keplerian shear is a conditional version of the notion 
of strong mixing. Informally, if the system restricted to its invariant subsets is mixing, 
then $(\Omega, \mu, (g_t)_{t \in \R})$ has Keplerian shear. The interesting examples occur 
when these restrictions are ergodic, but not mixing: that is the case, 
for instance, of translation flows on a torus.

\smallskip

In this article, we give a criterion ensuring Keplerian 
shear for a large class of such systems; for instance, one of our result is:

\begin{proposition}[Corollary of Theorem~\ref{thm:ClassiquePrincipal} and Proposition~\ref{prop:BaireGenericite}]

Let $M$ be a Riemannian manifold, $d \geq 1$ and $k \in [1, \infty]$. Let $v \in \Ccal^k (M, \R^d)$, 
and put $g_t (x,y) := (x, y+tv(x))$ for $(x,y)\in M \times \Tbb^d$. If:
\begin{equation*}
\Vol_M (\nabla \langle \xi, v \rangle) = 0 \quad \forall \xi \in \Z^d \setminus \{0\},
\end{equation*}
then $(M \times \Tbb^d, \Vol_M \otimes \Leb_{\Tbb^d}, (g_t))$ exhibits Keplerian shear. 
Moreover, the criterion above is satisfied for a generic $v \in \Ccal^k (M, \R^d)$.
\end{proposition}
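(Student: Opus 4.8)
The plan is to reduce everything to one-dimensional oscillatory integrals via a Fourier expansion in the torus fibre, and to read off both the conditional expectation and its convergence from the decay of these integrals.

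First I would expand any $f\in\Lbb^2(M\times\Tbb^d)$ as $f(x,y)=\sum_{\xi\in\Z^d}\hat f_\xi(x)\,e^{2\pi\ii\langle\xi,y\rangle}$ with $\hat f_\xi\in\Lbb^2(M)$ and $\sum_\xi\|\hat f_\xi\|_{2}^2=\|f\|_{2}^2$. Since $g_t$ acts by $y\mapsto y+tv(x)$, the $\xi$-th fibre coefficient of $f\circ g_t$ is $\hat f_\xi(x)\,e^{2\pi\ii t\langle\xi,v(x)\rangle}$. A function $F$ is $g_t$-invariant iff each coefficient satisfies $\hat F_\xi(x)\,e^{2\pi\ii t\langle\xi,v(x)\rangle}=\hat F_\xi(x)$ for all $t$, i.e. $\hat F_\xi$ vanishes wherever $\langle\xi,v\rangle\neq0$. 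Under the hypothesis, for every $\xi\neq0$ the critical set $\{\nabla\langle\xi,v\rangle=0\}$ is $\Vol_M$-null; the complementary part of the zero level set $\{\langle\xi,v\rangle=0\}$ is a regular codimension-one level set, hence also null, so $\{\langle\xi,v\rangle=0\}$ is null and every invariant function depends on $x$ alone. This identifies $\Ical$ with the $\sigma$-algebra lifted from $M$ and gives $\Ebb_\mu(f|\Ical)=\hat f_0$.

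It then suffices, using that $f\mapsto f\circ g_t$ is an $\Lbb^2$-isometry and a density argument, to prove for each fixed $\xi\neq0$ and each $a\in\Lbb^1(M)$ that $I_\xi(t):=\int_M a(x)\,e^{2\pi\ii t\langle\xi,v(x)\rangle}\dd\Vol_M\to0$ as $t\to+\infty$; testing $f\circ g_t$ against functions with finitely many nonzero fibre modes (with $a=\hat f_\xi\,\overline{\hat h_\xi}$ in the $\xi$-th term) then yields weak convergence to $\hat f_0=\Ebb_\mu(f|\Ical)$. Here lies the analytic heart of the argument. Writing $\phi:=\langle\xi,v\rangle$, the integral $I_\xi(t)$ is the Fourier transform $\hat\nu(t)$ of the pushforward measure $\nu:=\phi_*(a\dd\Vol_M)$ on $\R$. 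The hypothesis $\Vol_M(\{\nabla\phi=0\})=0$ forces $\nu$ to be absolutely continuous: away from its critical set $\phi$ is a $\Ccal^1$ submersion, so by the coarea formula (equivalently the implicit function theorem plus Fubini) the preimage of any Lebesgue-null set is $\Vol_M$-null, whence $\nu\ll\Leb_{\R}$. As $\nu$ is a finite complex measure with $\Lbb^1$ density, the classical Riemann--Lebesgue lemma gives $\hat\nu(t)\to0$. This is exactly where the criterion enters, and I expect it to be the crux: a nonvanishing gradient upgrades ``no atoms'' to genuine absolute continuity, which is what makes Riemann--Lebesgue applicable, rather than the merely necessary atomlessness.

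Finally, for genericity I would fix $\xi\neq0$, set $G_\xi:=\{v:\Vol_M(\{\nabla\langle\xi,v\rangle=0\})=0\}$, and observe that the good set is the countable intersection $\bigcap_{\xi\in\Z^d\setminus\{0\}}G_\xi$; since $\Ccal^k(M,\R^d)$ is a Baire space it is enough to show each $G_\xi$ is residual. Density is immediate: Morse functions are dense in $\Ccal^k(M,\R)$ (approximating first in $\Ccal^\infty$ when $k=1$), a Morse function has isolated, hence null, critical set, and the continuous open surjection $v\mapsto\langle\xi,v\rangle$ transfers this back to $\Ccal^k(M,\R^d)$. For the $G_\delta$ structure I would invoke Thom's jet-transversality theorem: for a residual set of $v$ the $1$-jet of $\langle\xi,v\rangle$ is transverse to the locus $\{du=0\}$ of codimension $\dim M$ in $J^1(M,\R)$, so its critical set is $0$-dimensional and null; intersecting over $\xi$ and over an exhaustion of $M$ by compact sets gives a residual set satisfying the criterion. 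The subtlety to watch is the low-regularity endpoint $k=1$, where jet transversality must be routed through a smooth dense approximation in order to preserve the Baire ($G_\delta$) structure; this, together with the absolute-continuity step above, is where the real care is needed.
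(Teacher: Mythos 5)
Your proof of the first half (the criterion implies Keplerian shear) is correct, and its mechanism is the same as the paper's proof of Theorem~\ref{thm:ClassiquePrincipal}: expand in Fourier modes along the torus fibre and kill each mode $\xi\neq 0$ by Riemann--Lebesgue, using that $\nabla\langle\xi,v\rangle\neq 0$ almost everywhere. Only the bookkeeping differs: the paper rectifies $\langle\xi,v\rangle$ in submersion normal-form charts so that the phase becomes a coordinate, applies the one-dimensional Riemann--Lebesgue lemma there, and finishes with dominated convergence, whereas you push $a\dd\Vol_M$ forward by $\phi=\langle\xi,v\rangle$, note that the hypothesis makes the pushforward absolutely continuous, and apply Riemann--Lebesgue once on $\R$. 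For the trivial bundle $M\times\Tbb^d$ of this statement your packaging is cleaner; the paper's chart-based version is what extends to the nontrivial $\GL_d(\Z)$-bundles covered by Theorem~\ref{thm:ClassiquePrincipal}. Your identification of the invariant $\sigma$-algebra and the reduction via the isometry/density lemma are fine.

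The genericity half, however, has a genuine gap at the endpoint $k=1$, which you flag but whose proposed repair cannot work. Your scheme obtains residuality by exhibiting, \emph{inside} the good set $G_\xi$, sets of the form ``$\langle\xi,v\rangle$ is Morse (equivalently, $j^1$ transverse to $\{du=0\}$) on a compact $K$'', which are open and dense. This is exactly the paper's remark following Proposition~\ref{prop:BaireGenericite}, and it genuinely requires $k\geq 2$: openness of the Morse condition needs control of second derivatives. At $k=1$ the failure is structural, not technical. If $u\in\Ccal^1$ has a critical point $p$ (on a compact manifold it always does), then for small $\delta$ the perturbation $w=-\chi\bigl((\cdot-p)/\delta\bigr)\,(u-u(p))$ satisfies $\norm{w}{\Ccal^1}=o(1)$ as $\delta\to 0$ (since $du(p)=0$ forces $|u-u(p)|=o(|x-p|)$ and $|du|=o(1)$ near $p$), while $u+w$ is constant on $B(p,\delta/2)$, so its critical set has positive measure. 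Hence every $\Ccal^1$-neighborhood of every function with a critical point meets the bad set, and the good set has empty interior in the $\Ccal^1$ topology: it contains no dense open set at all. It can still be residual --- the paper proves it is --- but any dense $G_\delta$ inside it must be an intersection $\bigcap_n W_n$ of dense open sets $W_n$ \emph{none of which lies inside the good set}. This is precisely what ``routing jet transversality through smooth dense approximation'' cannot produce: approximation yields density, never openness, and every open set your method outputs is contained in the good set, which is impossible at $k=1$.

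This obstruction is what the paper's proof of Proposition~\ref{prop:BaireGenericite} is engineered around: its open sets are the quantitative conditions $\Leb(\{\norm{d\langle\xi,v_i\circ\varphi_i^{-1}\rangle}{}\leq 1/m\}\cap\varphi_i(K_i))<1/n$, which are open in the $\Ccal^1$ topology but are \emph{not} subsets of the good set; density of their union over $m$ is proved by the explicit perturbation $v_i\mapsto v_i+t\,x_1\chi_i\,\xi$, which translates $d\langle\xi,v_i\rangle$ by $t\norm{\xi}{}^2 e_1^*$, together with a pigeonhole argument. Your density step (Morse approximation transferred through the continuous open surjection $v\mapsto\langle\xi,v\rangle$) can be retained, but for $k=1$ the $G_\delta$ step must be replaced by a quantitative argument of this kind; for $k\geq 2$ your argument goes through as written.
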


We also study the rate of decay of conditional covariance for the geodesic flow on $T^1 \Tbb^d$, 
and give non-trivial examples of non-Hamiltonian systems with Keplerian shear.

\smallskip

Keplerian shear for the geodesic flow on the flat torus is related to two famous 
problems. The first is Landau's damping for plasma dynamics on a torus (see Landau's 
article~\cite{Landau:1946}, and~\cite[Theorem~3.1]{MouhotVillani:2011} for a version 
which follows closely our formalism), where the effect is qualitatively similar,
although the underlying mechanism is different. The second is Gauss's circle problem, 
which consists in counting integral points in a large disc; we shall discuss 
it in Sub-subsection~\ref{subsubsec:Gauss}. The methods used to tackle these problems 
are either through Fourier transform (e.g. for Landau damping), or with a 
big arc/small arc decomposition (typical for Gauss's circle problem). While both work in our setting, 
we shall only use the Fourier transform.

\smallskip

In the context of ergodic theory, a notion closely related with Keplerian shear was used independently 
by F.~Maucourant~\cite{Maucourant:2016} to prove that the some hyperbolic actions on 
$(\R^d \rtimes SL_d (\R))_{/(\Z^d \rtimes SL_d (\Z))}$ are ergodic for a large class of measures. 
The presentation in~\cite{Maucourant:2016} is however very different, as the phenomenon 
-- named \textit{asynchronicity} -- is described as a version of unique ergodicity for 
measures with prescribed marginals.

\medskip
\textbf{\textsc{Organization of the article}}
\smallskip

Section~\ref{sec:ProprietesFondamentales} gives general 
results on the notion of Keplerian shear (including equivalences between distinct definitions), 
and gives us some tools to use for the remainder of the article.

\smallskip

Section~\ref{sec:FibreBundle} 
deals with a first family of systems which may exhibit Keplerian shear: fibrations by tori, 
where the flow acts by translation on each torus. using action-angle coordinates, this family includes 
integrable Hamiltonian flows. We give an explicit criterion ensuring Keplerian shear, check that 
it is $\Ccal^r$-generic ($r \geq 1$) and satisfied for some explicit systems, then give rates of convergence 
for the geodesic flow on $T^1 \Tbb^n$. We also detail the link between Keplerian shear and 
the unique ergodicity as investigated in~\cite{Maucourant:2016}.

\smallskip

Section~\ref{sec:StretchedBirkhoff} deals with another family of dynamical systems (roughly, 
``fibrations by suspension flows''), which includes many non-Hamiltonian examples, and uses 
a different mechanism to ensure Keplerian shear.

\smallskip

The shorter Section~\ref{sec:SansMelange} 
gives examples of systems without Keplerian shear.

\medskip
\textbf{\textsc{A note on the terminology}}
\smallskip

Given that Keplerian shear is a conditional version of the notion mixing, one could want 
to use a terminology such as \textit{conditional (strong) mixing}. We prefer to eschew this option, 
and to keep the name of Keplerian shear; indeed, we think that otherwise the name of conditional 
(strong) mixing would be overloaded.

\smallskip

Indeed, in probability theory, there are already multiple notions of conditional mixing; 
compare for instance~\cite{Rao:2009} (where it refers to conditional $\alpha$-mixing) 
and~\cite{KacemLoiselMaumeDeschamps:2016}, among others. 

\smallskip

More worryingly, in ergodic theory, the notion of conditionally weakly mixing systems 
is well-established (see e.g.~\cite{Tao:2008}), but if one where to conceive a notion of 
conditional strong mixing along this line, the resulting notion would be stronger than Keplerian shear, 
essentially requiring that almost every subsystem in its ergodic decomposition be mixing.

\medskip
\textbf{\textsc{Open problems}}
\smallskip

We sum up here some further leads which seem worth pursuing.

\smallskip

The setting of Section~\ref{sec:FibreBundle} covers integrable Hamiltonian systems. However, 
it requires some regularity, and in particular it does not cover singular systems. A conjecture by 
Boshernitzan asserts that given a compact translation surface $S$, the geodesic flow on 
$(T^1 S, \Liouv)$ exhibits Keplerian shear. This question, mentioned as \textit{illumination by circles}, 
also appears in~\cite{Monteil:2005}, and admits a partial answer by J.~Chaika and P.~Hubert~\cite{ChaikaHubert:2015}, 
where the convergence of $\Cov (f, g \circ g_t|\Ical)$ to zero is shown along a density $1$ 
subsequence for all continuous observables $f$ and $g$\footnote{Technically, J.~Chaika and P.~Hubert 
show the convergence only for observables which do not depend on the direction, but 
a straightworward generalization and a diagonal argument yield the general case.}.

\smallskip

In Subsection~\ref{subsec:Speed}, we investigate the speed of Keplerian shear for the geodesic flow 
on $T^1 \Tbb^n$. The problem is simplified by the particularities of the geometry of the sphere, 
more precisely the fact that its principal curvatures do not vanish. What would the speed of convergence 
be if the curvature vanishes (e.g.\ in a topologically or measure-theoretically generic setting)?

\smallskip

Finally, while the settings of Sections~\ref{sec:FibreBundle} and~\ref{sec:StretchedBirkhoff} 
are distinct, it could be that they are a special case of a more general structure. A natural candidate 
would be spaces fibrated by suspension tori, but we need new tools to prove Keplerian shear (or even to 
get a description of the invariant $\sigma$-algebra $\Ical$).

\medskip
\textbf{\textsc{Acknowledgements}}
\smallskip

I would like to thank S\'ebastien Gou\"ezel, Bassam Fayad and Fra\c{c}ois Maucourant for 
their useful comments and some of the references, as well as J\'er\^ome Buzzi for his 
feedback on the presentation.

\section{General properties of Keplerian shear}
\label{sec:ProprietesFondamentales}

The following lemma from basic functional analysis is quite useful to prove the ergodicity and mixing of any given dynamical system, 
and will be instrumental in the remainder of our article.

\begin{lemma}\quad
\label{lem:LemmeAnalyseFonctionnelle}

Let $\B$ be a Banach space. Let $(T_t)_{t \geq 0}$ be a family of operators on $\B$, such that $\sup_{t \in \R_+} \norm{T_t}{\B \to \B} < + \infty$. 
Let $T$ be an operator on $\B$.

Let $E$ and $E^*$ be subsets of $\B$ and $\B^*$ respectively, whose span is dense in their respective space. Assume that, for all $f \in E$ and 
$g \in E^*$, 
\begin{equation}
\label{eq:ConvergenceFaible}
\lim_{t \to + \infty} \langle g, T_t f \rangle 
= \langle g, T f \rangle.
\end{equation}

Then $(T_t f)_{t \geq 0}$ converges weakly to $T f$ for all $f \in \B$.
\end{lemma}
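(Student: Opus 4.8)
The plan is to prove weak convergence on all of $\B$ by a standard three-$\epsilon$ (density) argument, bootstrapping the convergence from the generating sets $E$ and $E^*$ to their dense spans and then to the whole space, while crucially using the uniform bound $C := \sup_{t \geq 0} \norm{T_t}{\B \to \B} < +\infty$ to control the tails.

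First I would observe that the convergence~\eqref{eq:ConvergenceFaible} is bilinear in $(f,g)$, so by taking finite linear combinations it extends immediately from pairs in $E \times E^*$ to pairs in $(\VectSpan E) \times (\VectSpan E^*)$. This step is purely algebraic and requires no estimates; it only uses that a finite sum of convergent sequences converges to the sum of the limits.

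Next I would upgrade from the dense span of $E$ to all of $\B$, at a fixed functional $g \in \VectSpan E^*$. Fix $f \in \B$ and $\varepsilon > 0$, and choose $f_0 \in \VectSpan E$ with $\norm{f - f_0}{\B} < \varepsilon$. Then I would split
\begin{equation*}
\langle g, T_t f \rangle - \langle g, T f \rangle
= \langle g, T_t (f - f_0) \rangle + \bigl( \langle g, T_t f_0 \rangle - \langle g, T f_0 \rangle \bigr) + \langle g, T (f_0 - f) \rangle.
\end{equation*}
The middle term tends to $0$ by the previous step; the first term is bounded by $\norm{g}{\B^*} \, C \, \varepsilon$ uniformly in $t$, using the uniform bound on the $T_t$; and the third is bounded by $\norm{g}{\B^*} \norm{T}{\B \to \B} \varepsilon$, which forces me to note that $T$ is (assumed, or must be checked to be) a bounded operator so that $\langle g, Tf \rangle$ is well-defined and the tail is controlled. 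Taking $\limsup_{t \to +\infty}$ and then letting $\varepsilon \to 0$ yields $\lim_t \langle g, T_t f \rangle = \langle g, Tf \rangle$ for every $f \in \B$ and every $g \in \VectSpan E^*$.

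Finally I would perform the symmetric density argument on the functional variable: fix $f \in \B$ and an arbitrary $g \in \B^*$, approximate $g$ by $g_0 \in \VectSpan E^*$ with $\norm{g - g_0}{\B^*} < \varepsilon$, and split analogously, again using the uniform bound $C$ to dominate $\langle g - g_0, T_t f \rangle$ by $\varepsilon \, C \, \norm{f}{\B}$ uniformly in $t$. The main (and really the only) obstacle is bookkeeping the uniformity in $t$: the entire argument hinges on the hypothesis $\sup_t \norm{T_t}{\B \to \B} < +\infty$, which is exactly what lets the approximation errors be estimated independently of $t$ before passing to the limit. I would also flag that one should confirm $T$ is bounded (or interpret $\langle g, Tf \rangle$ as the pointwise limit, in which case boundedness of $T$ on $\VectSpan E$ followed by the same approximation gives a bounded extension); this is the one place where a hypothesis not explicitly displayed must be used or verified.
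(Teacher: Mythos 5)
Your proof is correct and is essentially the paper's own argument: the paper also extends~\eqref{eq:ConvergenceFaible} to $\VectSpan(E)\times\VectSpan(E^*)$ by bilinearity and then uses the local equicontinuity of the uniformly bounded family $(g,f)\mapsto\langle g,T_tf\rangle$ to pass from the dense subset to all of $\B^*\times\B$, which is precisely the three-$\varepsilon$ estimate you write out in two stages. Your side remark that $T$ must be bounded is well taken -- the paper's word ``operator'' tacitly means bounded operator, and without continuity of $T$ the conclusion can indeed fail off the span.
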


\begin{proof}\quad

By bilinearity, Equation~\eqref{eq:ConvergenceFaible} holds for all $f \in \VectSpan(E)$ and $g \in \VectSpan(E^*)$.

\smallskip

Since $\sup_{t \in \R_+} \norm{T_t}{\B \to \B} < + \infty$, the family of functions $T_t : \B^* \times \B \to \C$ 
is locally equicontinuous, and by the remark above, it converges to $T$ on a dense subset. Hence, the convergence 
of Equation~\eqref{eq:ConvergenceFaible} holds for all $f \in \B$ and $g \in \B^*$.
\end{proof}


When we use Lemma~\ref{lem:LemmeAnalyseFonctionnelle}, the operator $T_t$ shall correspond to the 
composition by the flow $g_t$ at time $t$, and the operator $T$ to the projection $f \mapsto \Ebb (f | \Ical)$. 
Since the flow is assumed to preserve the measure, for all $t \geq 0$ and all $p \in [1, + \infty]$, the operator 
$T_t$ acting on $\Lbb^p (\Omega, \mu)$ is unitary. Lemma~\ref{lem:LemmeAnalyseFonctionnelle} implies that 
to prove the Keplerian shear in one of those Banach space $\B$ (potentially different from $\Lbb^2$), 
it is enough to restrict ourselves to subsets $E$ of $\B$ and $E^*$ of  $\B^*$ whose linear span is dense.
As a first consequence, in the definition of Keplerian shear, one may replace $\Lbb^2$ by $\Lbb^p$ for any 
$p \in [1, +\infty)$:

\begin{proposition}\quad

Let $(\Omega, \mu, (g_t)_{t \in \R})$ be a flow which preserves a probability measure. Let $\Ical$ be the invariant 
$\sigma$-algebra of the system. Then there is equivalence between:
\begin{itemize}
\item There exists $p \in [1, + \infty)$ such that, for all $f \in \Lbb^p (\Omega, \mu)$, we have $f \circ g_t \to \Ebb (f | \Ical)$ weakly in $\Lbb^p$.
\item The system exhibits Keplerian shear.
\item For all $p \in [1, + \infty)$, for all $f \in \Lbb^p (\Omega, \mu)$, we have $f \circ g_t \to \Ebb (f | \Ical)$ weakly in $\Lbb^p$.
\end{itemize}
\end{proposition}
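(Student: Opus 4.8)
The plan is to prove the three statements equivalent by establishing the implications $(3) \Rightarrow (2) \Rightarrow (1)$ (which are essentially immediate) and then the genuine content, $(1) \Rightarrow (3)$. The first two implications require almost no work: statement $(3)$ specializes to $p = 2$, which is exactly the definition of Keplerian shear, giving $(2)$; and $(2)$ is the instance $p = 2$ of the existential statement $(1)$. So the heart of the matter is to show that weak convergence $f \circ g_t \to \Ebb(f \st \Ical)$ in $\Lbb^{p_0}$ for a \emph{single} exponent $p_0 \in [1, +\infty)$ forces the same weak convergence in \emph{every} $\Lbb^p$, $p \in [1, +\infty)$.

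For $(1) \Rightarrow (3)$ I would invoke Lemma~\ref{lem:LemmeAnalyseFonctionnelle} as suggested by the remark following it. Fix the target exponent $p$, set $\B = \Lbb^p(\Omega, \mu)$ and $\B^* = \Lbb^{q}(\Omega, \mu)$ with $q$ the conjugate exponent (and $\B^* = \Lbb^1$'s dual handled by the bounded-functions remark when $p = 1$). Take $T_t$ to be composition by $g_t$, which is an isometry of $\Lbb^p$ since $\mu$ is preserved, so the uniform-boundedness hypothesis of the lemma holds with $\sup_t \norm{T_t}{\B \to \B} = 1$; and take $T$ to be $f \mapsto \Ebb(f \st \Ical)$, a contraction on every $\Lbb^p$. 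For the dense test sets $E$ and $E^*$ I would choose $E = E^* = \Lbb^\infty(\Omega, \mu)$, which is dense in $\Lbb^p$ for every finite $p$ and whose span is itself. It then suffices to verify the pairing convergence~\eqref{eq:ConvergenceFaible}, namely $\int_\Omega (f \circ g_t)\, g \dd \mu \to \int_\Omega \Ebb(f \st \Ical)\, g \dd \mu$, for bounded $f$ and $g$; Lemma~\ref{lem:LemmeAnalyseFonctionnelle} then upgrades this to weak convergence for all $f \in \Lbb^p$, which is statement $(3)$.

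The one point requiring genuine attention — the main obstacle — is establishing this scalar convergence on the dense set $\Lbb^\infty \times \Lbb^\infty$ starting only from hypothesis $(1)$, which a priori gives convergence only against $\Lbb^{q_0}$-duals for the single exponent $p_0$. The clean resolution is that a bounded function belongs to $\Lbb^r$ for every $r \in [1, +\infty)$ because $\mu$ is finite; in particular, for $f, g \in \Lbb^\infty$ we have $f \in \Lbb^{p_0}$ and $g \in \Lbb^{q_0}$, so hypothesis $(1)$ directly yields
\begin{equation*}
\lim_{t \to + \infty} \int_\Omega (f \circ g_t)\, g \dd \mu
= \int_\Omega \Ebb(f \st \Ical)\, g \dd \mu.
\end{equation*}
Thus the scalar convergence needed for~\eqref{eq:ConvergenceFaible} holds on $\Lbb^\infty \times \Lbb^\infty$ for \emph{free}, and the lemma does the rest. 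The case $p = 1$ needs a brief word, since $(\Lbb^1)^* = \Lbb^\infty$: here one takes $E^* = \Lbb^\infty$ and uses that weak-$*$ convergence against $\Lbb^\infty$ is precisely weak convergence in $\Lbb^1$, so the same argument applies verbatim with $q = \infty$, the pairing $\int (f\circ g_t)\,g\dd\mu$ still making sense and converging for $f \in \Lbb^1 \cap \Lbb^\infty$, $g \in \Lbb^\infty \subset \Lbb^{q_0}$. Since the target exponent $p$ was arbitrary, $(3)$ follows, closing the cycle of equivalences.
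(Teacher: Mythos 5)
Your proof is correct and follows essentially the same route as the paper's: both reduce $(1)\Rightarrow(3)$ to the observation that $\Lbb^\infty \subset \Lbb^{p_0} \cap (\Lbb^{p_0})^*$ (so the pairing convergence for bounded observables comes for free from the hypothesis), and then apply Lemma~\ref{lem:LemmeAnalyseFonctionnelle} with $E = E^* = \Lbb^\infty$, which is dense in $\Lbb^q$ and in its dual for every finite $q$. The only difference is that you spell out the trivial implications and the $p=1$ case explicitly, which the paper leaves implicit.
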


\begin{proof}

We only prove the non-trivial implication. Let $p \in [1, + \infty)$. Assume such that, for all $f \in \Lbb^p (\Omega, \mu)$, 
we have $f \circ g_t \to \Ebb (f | \Ical)$ weakly in $\Lbb^p$. Then, since $\Lbb^\infty \subset \Lbb^p \cap (\Lbb^p)^*$, 
for all $f_1$ and $f_2$ in $\Lbb^\infty$,
\begin{equation*}
\lim_{t \to + \infty} \langle f_1,  f_2 \circ g_t \rangle 
= \langle f_1, \Ebb (f_2 | \Ical) \rangle.
\end{equation*}
Let $q \in [1, + \infty)$. Since $\Lbb^\infty$ is dense in both $\Lbb^q$ and $(\Lbb^q)^*$, by Lemma~\ref{lem:LemmeAnalyseFonctionnelle}, 
the convergence above occurs for all $f_1$ and $f_2$ in $\Lbb^q$ and $(\Lbb^q)^*$ respectively.
\end{proof}

A second consequence is that Keplerian shear is not uniquely a property of the invariant measure $\mu$, but 
of the class of $\mu$.

\begin{proposition}\quad
\label{prop:MesuresAbsContinues}

Let $(\Omega, \mu, (g_t)_{t \in \R})$ be a flow which preserves a probability measure and exhibits Keplerian shear. 
Let $\nu \ll \mu$ be a probability measure which is also $(g_t)$-invariant. Then $(\Omega, \nu, (g_t)_{t \in \R})$ also 
exhibits Keplerian shear.
\end{proposition}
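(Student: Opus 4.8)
The plan is to use the characterization of Keplerian shear through Lemma~\ref{lem:LemmeAnalyseFonctionnelle}, reducing the weak convergence statement to a convergence on a conveniently dense class of observables, and to transfer the known convergence for $\mu$ to $\nu$ via the Radon--Nikodym derivative $\varphi := \de\nu/\de\mu$. The key observation is that, writing $\langle\cdot,\cdot\rangle_\nu$ for the pairing on $\Lbb^2(\Omega,\nu)$, one has $\langle f_1, f_2 \circ g_t\rangle_\nu = \int f_1\, (f_2\circ g_t)\, \varphi \dd\mu = \langle f_1 \varphi, f_2\circ g_t\rangle_\mu$, so the $\nu$-pairing against a test function $f_1$ is an ordinary $\mu$-pairing against the reweighted test function $f_1\varphi$. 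This lets me import the hypothesis directly.

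First I would fix the bounded, dense test classes: since the relevant operators (composition by $g_t$ and the conditional expectation) are contractions on the appropriate $\Lbb^2$ spaces, it suffices by Lemma~\ref{lem:LemmeAnalyseFonctionnelle} to verify \eqref{eq:DefinitionKeplerianShear} weakly for $f_2$ ranging over $\Lbb^\infty(\Omega)$ and tested against $f_1 \in \Lbb^\infty(\Omega)$; here one must check $\Lbb^\infty$ is dense in $\Lbb^2(\Omega,\nu)$ and in its dual, which holds since $\nu$ is a probability measure. Second, for such $f_1, f_2 \in \Lbb^\infty$, I note $f_1 \varphi \in \Lbb^1(\Omega,\mu)$ and $f_2 \in \Lbb^\infty(\Omega,\mu)$, so the Keplerian shear hypothesis for $\mu$ (in its $\Lbb^1$--$\Lbb^\infty$ form, justified by the preceding Proposition on the equivalence of $\Lbb^p$ formulations) gives
\begin{equation*}
\lim_{t\to+\infty} \langle f_1\varphi, f_2\circ g_t\rangle_\mu = \langle f_1\varphi, \Ebb_\mu(f_2\mid\Ical)\rangle_\mu.
\end{equation*}

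Third, I would rewrite the right-hand limit in terms of $\nu$ and identify it with $\langle f_1, \Ebb_\nu(f_2\mid\Ical)\rangle_\nu$, which is the desired limit. The key step here is to show that $\Ebb_\mu(f_2\mid\Ical)$ agrees $\nu$-almost everywhere with $\Ebb_\nu(f_2\mid\Ical)$ on the relevant test functions, or more precisely that $\langle f_1\varphi, \Ebb_\mu(f_2\mid\Ical)\rangle_\mu = \langle f_1, \Ebb_\nu(f_2\mid\Ical)\rangle_\nu$. This is where I expect the main obstacle to lie: the two conditional expectations are taken with respect to \emph{different} measures on the \emph{same} invariant $\sigma$-algebra $\Ical$, and one must control how $\varphi$ interacts with $\Ical$. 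The clean resolution is to use that $\varphi$ is $\Ical$-measurable: indeed $\nu$ is $(g_t)$-invariant and $\mu$ is $(g_t)$-invariant, so $\varphi\circ g_t = \varphi$ $\mu$-almost everywhere for every $t$, whence $\varphi$ is invariant and thus measurable with respect to $\Ical$. Granting this, for any $\Ical$-measurable $f_1$ one computes $\langle f_1\varphi, \Ebb_\mu(f_2\mid\Ical)\rangle_\mu = \langle f_1\varphi, f_2\rangle_\mu = \langle f_1, f_2\rangle_\nu$, and testing against general $f_1\in\Lbb^\infty$ (not necessarily $\Ical$-measurable) reduces to the $\Ical$-measurable case by conditioning $f_1$ onto $\Ical$; the defining property of $\Ebb_\nu(\cdot\mid\Ical)$ then yields $\langle f_1, \Ebb_\nu(f_2\mid\Ical)\rangle_\nu$. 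Assembling the three steps and invoking Lemma~\ref{lem:LemmeAnalyseFonctionnelle} to pass from the dense test class to all of $\Lbb^2(\Omega,\nu)$ completes the proof. The one genuinely delicate point to write carefully is the $\Ical$-measurability of $\varphi$, since it hinges on the $(g_t)$-invariance of $\nu$ relative to $\mu$ and makes the two conditional expectations compatible.
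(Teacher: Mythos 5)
Your proof is correct and is essentially the paper's own argument: reweight the pairing by $h := \de\nu/\de\mu$, use the $(g_t)$-invariance of both measures to see that $h$ is $\Ical$-measurable (the paper asserts this without proof; your justification is the intended one), deduce $\Ebb_\nu(f_2\,|\,\Ical) = \Ebb_\mu(f_2\,|\,\Ical)$ $\nu$-almost surely, and conclude via Lemma~\ref{lem:LemmeAnalyseFonctionnelle} and density.

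The one step to tighten is your appeal to an ``$\Lbb^1$--$\Lbb^\infty$ form'' of the hypothesis: the preceding proposition covers $p \in [1,+\infty)$ only, i.e.\ observables in $\Lbb^p$ tested against functions in $\Lbb^{p'}$ with $p' > 1$, whereas your test function $f_1 h$ is a priori only in $\Lbb^1(\Omega,\mu)$; the case you invoke (weak-$*$ convergence in $\Lbb^\infty$) is therefore not literally contained in that statement. The fact you need is nonetheless true, and by the very device you are already using: for fixed $f_2 \in \Lbb^\infty$, the functionals $g \mapsto \Ebb_\mu(g \cdot f_2 \circ g_t)$ are equicontinuous on $\Lbb^1(\Omega,\mu)$ (bounded by $\norm{f_2}{\Lbb^\infty} \norm{g}{\Lbb^1}$) and converge on the dense subset $\Lbb^\infty$, hence on all of $\Lbb^1$ --- exactly the equicontinuity-plus-density argument that proves Lemma~\ref{lem:LemmeAnalyseFonctionnelle}. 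The paper sidesteps this point differently: it shrinks the test class to those $f_1 \in \Lbb^\infty(\Omega,\mu)$ with $f_1 h \in \Lbb^2(\Omega,\mu)$, so that the plain $\Lbb^2$ form of Keplerian shear for $\mu$ applies directly, and then checks that the image of this smaller class is still dense in $\Lbb^2(\Omega,\nu)$. Either repair is one line; with it, your argument is complete.
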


\begin{proof}

Let $(\Omega, \mu, (g_t)_{t \in \R})$ and $\nu$ be as in assumptions of the proposition. Let $h := \de \nu / \de \mu$.
Let $f_1$ be in $\Lbb^\infty (\Omega, \mu)$ be such that $f_1h \in \Lbb^2 (\Omega, \mu)$, and let $f_2 \in \Lbb^\infty (\Omega, \mu)$.
Since $h$ is $\Ical$-measurable, $\nu$-almost surely, $\Ebb_\nu (f_2 | \Ical) = \Ebb_\mu (f_2 | \Ical)$. Let $t \geq 0$. Then:
\begin{equation*}
\Ebb_\nu (f_1 \cdot f_2 \circ g_t) 
= \Ebb_\mu ((f_1h) \cdot f_2\circ g_t) .
\end{equation*}
Since the initial system is assumed to have Keplerian shear, $f \in \Lbb^\infty (\Omega, \mu)$ and $gh \in \Lbb^\infty (\Omega, \mu)$, 
we get:
\begin{equation*}
\lim_{t \to + \infty} \Ebb_\nu (f_1 \cdot f_2 \circ g_t) 
= \Ebb_\mu (f_1h\Ebb_\mu (f_2 | \Ical)) 
= \Ebb_\nu (f_1 \Ebb_\nu (f_2 | \Ical)).
\end{equation*}
The canonical projection $\Lbb^\infty (\Omega, \mu) \to \Lbb^\infty (\Omega, \nu)$ is surjective, so its image is dense in $\Lbb^2 (\Omega, \nu)$. 
The image of the set of functions $f_1 \in \Lbb^\infty (\Omega, \mu)$ such that $f_1h \in \Lbb^2 (\Omega, \mu)$ by this projection is 
also dense in $\Lbb^2 (\Omega, \nu)$. We use Lemma~\ref{lem:LemmeAnalyseFonctionnelle} to conclude.
\end{proof}

The last lemma asserts that, in the definition of Keplerian shear, the limit object $\Ebb_\mu (f | \Ical)$ cannot be meaningfully modified.

\begin{proposition}\quad

Let $(\Omega, \mu, (g_t)_{t \in \R})$ be a flow which preserves a probability measure. Let $f, h \in \Lbb^2 (\Omega, \mu)$.

\smallskip

If $(f \circ g_t)_{t \in \R}$ converges weakly to $h$, then $h = \Ebb_\mu (f | \Ical)$.
\end{proposition}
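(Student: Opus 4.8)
The plan is to show that the weak limit $h$ has the two properties that together characterise $\Ebb_\mu(f | \Ical)$: namely (i) $h$ is $\Ical$-measurable, and (ii) $\langle \phi, f \rangle = \langle \phi, h \rangle$ for every $\Ical$-measurable $\phi \in \Lbb^2(\Omega, \mu)$. Since $\Ebb_\mu(f | \Ical)$ is the orthogonal projection of $f$ onto the closed subspace of invariant ($\Ical$-measurable) functions, these two properties determine $h$ uniquely and force $h = \Ebb_\mu(f | \Ical)$.

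The central observation is that, because the flow preserves $\mu$, each composition operator $U_s : \phi \mapsto \phi \circ g_s$ is unitary on $\Lbb^2(\Omega, \mu)$, with adjoint $U_s^* = U_{-s}$; in particular each $U_s$ is weakly continuous. For step (i), I would fix $s \in \R$ and a test function $\phi \in \Lbb^2(\Omega, \mu)$, and write $\langle f \circ g_{t+s}, \phi \rangle = \langle f \circ g_t, \phi \circ g_{-s} \rangle$, using $f \circ g_{t+s} = U_s(f \circ g_t)$ and $U_s^* = U_{-s}$. Letting $t \to +\infty$, the left-hand side tends to $\langle h, \phi \rangle$ (as $t+s \to +\infty$), while the right-hand side tends to $\langle h, \phi \circ g_{-s} \rangle = \langle h \circ g_s, \phi \rangle$. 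As $\phi$ is arbitrary, $h \circ g_s = h$ for every $s$, so $h$ is flow-invariant, hence $\Ical$-measurable.

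For step (ii), I would use that any $\Ical$-measurable $\phi \in \Lbb^2$ is itself flow-invariant, so $\langle \phi, f \circ g_t \rangle = \langle \phi \circ g_{-t}, f \rangle = \langle \phi, f \rangle$ for every $t$; the pairing is thus constant in $t$, and passing to the weak limit gives $\langle \phi, f \rangle = \langle \phi, h \rangle$. Combining (i) and (ii) then identifies $h$ with the conditional expectation.

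I expect no serious obstacle, since the whole argument is soft Hilbert-space analysis resting on the unitarity of $U_s$ and the uniqueness of weak limits. The only point requiring a little care is the identification between $\Ical$-measurability and genuine invariance under the entire flow $(g_s)_{s \in \R}$, rather than invariance for a single time; this is where the continuous-time structure enters, and it should be invoked explicitly both to conclude in (i) that $h$ is $\Ical$-measurable and to use in (ii) that $\Ical$-measurable functions are flow-invariant.
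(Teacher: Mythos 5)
Your proof is correct, but it follows a genuinely different route from the paper's. The paper argues via averaging: since $t \mapsto \Ebb_\mu(g \cdot f \circ g_t)$ is bounded, measurable, and converges to $\Ebb_\mu(gh)$, its Ces\`aro averages converge to the same limit; on the other hand, by von Neumann's ergodic theorem the Ces\`aro averages $\frac{1}{t}\int_0^t f\circ g_s \dd s$ converge in $\Lbb^2$ to $\Ebb_\mu(f|\Ical)$, so pairing against arbitrary $g \in \Lbb^2$ forces $h = \Ebb_\mu(f|\Ical)$. You instead avoid the ergodic theorem altogether: using unitarity of the Koopman operators you show the weak limit $h$ is itself flow-invariant, and that pairings against invariant test functions are constant in $t$, then conclude by the characterisation of $\Ebb_\mu(\cdot|\Ical)$ as the orthogonal projection onto $\Lbb^2(\Ical)$. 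Each approach has its advantages: the paper's is shorter once von Neumann is available, but it needs the measurability in $t$ of the correlation function (hence, implicitly, a measurable flow) in order to form the Ces\`aro integral and apply Fubini; yours is soft, self-contained Hilbert-space analysis with no integration over $t$ at all. The one point you rightly flag deserves emphasis: your step (i) yields $h \circ g_s = h$ $\mu$-a.e.\ for each $s$, with the null set a priori depending on $s$, and upgrading this to ``$h$ is ($\mu$-a.e.\ equal to) an $\Ical$-measurable function'' is exactly the identification of $\Lbb^2(\Ical)$ with the closed subspace of flow-invariant functions. This is standard (and is precisely what von Neumann's theorem packages for the paper), but it is the one place where your argument, like the paper's, quietly uses more than pure Hilbert-space formalism; spelling it out (e.g.\ via joint measurability of the flow and Fubini, or by citing the standard identification) would make your proof complete.
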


\begin{proof}

Let $g \in \Lbb^2 (\Omega, \mu)$. Our hypotheses imply that $\lim_{t \to + \infty} \Ebb_\mu (g \cdot f \circ g_t) = \Ebb_\mu (gh)$. 
In addition, the function $t \to \Ebb_\mu (g \cdot f \circ g_t)$ is measurable and bounded. By taking the Ces\`aro average, we get:
\begin{equation*}
\lim_{t \to + \infty} \Ebb_\mu \left( \frac{g}{t} \int_0^t f \circ g_s \dd s \right) 
= \lim_{t \to + \infty} \frac{1}{t} \int_0^t \Ebb_\mu (g \cdot f \circ g_s) \dd s 
= \Ebb_\mu (gh).
\end{equation*}
On the other hand, by von Neumann's ergodic theorem, 
\begin{equation*}
\lim_{t \to + \infty} \Ebb_\mu \left( \frac{g}{t} \int_0^t f \circ g_s \dd s \right) 
= \Ebb_\mu (g \Ebb_\mu (f | \Ical)).
\end{equation*}
Since this holds for all $g \in \Lbb^2$, we have $h = \Ebb_\mu (f | \Ical)$.
\end{proof}

%
%
%
%

\section{Affine tori bundles}
\label{sec:FibreBundle}

\subsection{Setting and main theorem}
\label{subsec:TheoremePrincipal}

We generalize our introductory examples to a class of flows on fibre bundles by tori which leave the basis invariant. 
More specifically, the spaces on which we work are the following:

\begin{definition}

An affine tori bundle is a $\Ccal^1$ manifold $\Omega$ which is a fiber 
bundle by $d$-dimensional tori, with group structure $\Tbb^d \rtimes \GL_d (\Z)$. 
In other words, there exist:
\begin{itemize}
\item two integers $n$, $d \geq 1$;
\item a $n$-dimensional $\Ccal^1$ real manifold $M$;
\item a $\Ccal^1$ projection $\pi : \ \Omega \to M$;
\item a maximal atlas $\Acal$ on $M$,
\end{itemize}
such that, for all $U \in \Acal$, we have a diffeomorphism $\psi_U : \pi^{-1} (U) \to U \times \Tbb^d$ 
such that $\pi_1 \circ \psi_U = \pi$, and the change of charts are given by:
\begin{equation*}
\psi_V \circ \psi_U^{-1} :  \left\{ 
\begin{array}{lll}
(U \cap V) \times \Tbb^d & \to & (U \cap V) \times \Tbb^d \\
(x,y) & \mapsto & (x, \alpha_{U, V} (x) + A_{U, V} (y))
\end{array}
\right. ,
\end{equation*}
where $\alpha_{U,V}$ is $\Ccal^1$ and $A_{U, V} \in \GL_d (\Z)$.
\end{definition}

The notions of ``subset of zero Lebesgue measure'' or ``subset of full Lebesgue measure'' 
are well-defined on $\Ccal^1$ manifolds (as they are invariant by diffeomorphisms), 
and thus so is the notion of ``probability measure absolutely continuous with respect to the 
Lebesgue measure''. We will abuse notations and write $\Leb (A) = 0$ for a measurable subset of zero 
Lebesgue measure $A$, and $\mu \ll \Leb$ for an absolutely continuous measure.

\begin{definition}

Let $\Omega$ be an affine tori bundle. 
A flow $(g_t)_{t \in \R}$ on $\Omega$ is said to be \textit{compatible on a chart} 
$\psi_U : \ \pi^{-1} (U) \to U \times \Tbb^d$ if 
there exists $v_{\psi_U} \in \Ccal^1 (U, \R^d)$ such that, for all $t \in \R$,
\begin{equation*}
\psi_U \circ g_t \circ \psi_U^{-1} (x,y) 
= (x, y + t v_\psi (x)).
\end{equation*}

A $\sigma$-finite measure $\mu$ on $\Omega$ is said to be \textit{compatible on a chart} $\psi_U : \ \pi^{-1} (U) \to U \times \Tbb^d$ 
if $\psi_{U,*} \mu_{|\pi^{-1} (U)} = (\pi_* \mu)_{|U} \otimes \Leb_{\Tbb^d}$.

\smallskip

A flow or a measure is said to be \textit{compatible} if it is compatible on all charts.
\end{definition}

A compatible measure is always invariant under a compatible flow. In addition, 
this notion behaves well with respect to the affine structure on the manifolds we work with. 
If a flow or a measure is compatible on some chart $\psi_U: U \cap V \to \pi (U \cap V) \times \Tbb^d$ 
and if $\psi_{U,V}$ is a change of charts, then the flow or the measure is compatible on the 
chart $\psi_{V|U \cap V}: U \cap V \to \pi (U \cap V) \times \Tbb^d$.

\smallskip

In what follows, we are working mostly with absolutely continuous measures. In this case, what happens 
on a subset of zero Lebesgue measure does not matter: the assumption that $M$ be a manifold can be weakened 
to account for singularities or boundaries. 

\smallskip

In light of the previous paragraph, the introduction of the structure group $\Tbb^d \rtimes \GL_d (\Z)$ 
might look gratuitous: one can always cut out the manifold $M$ along a set of zero Lebesgue measure to get a disjoint 
union of simply connected domains, on which there is no holonomy. However, this structure appears 
naturally in many examples. For instance, for all $n \geq 1$, we can work with the geodesic flow on 
$T \Sbb_n$: if we ignore the set of null tangent vectors, which is negligible, we get a fibre bundle 
over $\R_+^* \times \widetilde{Gr} (2, n+1)$ with fibre $\Sbb_1$. With the same adaptation, our setting 
also includes billiards in ellipsoids or the geodesic flow on ellipsoids (see C.~Jacobi~\cite{Jacobi:1866} 
for the geodesic flow on ellipsoids, J.~Moser~\cite{Moser:1978} for similar examples, and S.~Tabachnikov~\cite{Tabachnikov:2002} 
for the relation between the geodesic flow and the billiard). Let us also mention the study of the geodesic flow 
on $(\R^d \rtimes \SL_d (\R))_{/ \Z^d \rtimes\SL_d (\Z)}$ done by F.~Maucourant~\cite{Maucourant:2016}, 
in which the same structure appears.

\smallskip

Another important remark is that, when we change charts from chart $U$ to chart $V$, we have 
$v_{\psi_V | U \cap V} = A_{U, V} v_{\psi_U | U \cap V}$. So, while there is in general 
no well-defined function $v : \ M \to \R^d$ which gives the direction of the flow, the set of functions 
$\{x \mapsto \langle \xi, v(x) \rangle\}_{\xi \in \Z^d - \{0\}}$ is well-defined.

\smallskip

We are now ready to state our main theorem.

\begin{theorem}\quad
\label{thm:ClassiquePrincipal}

Let $\pi: \Omega \to M$ be an affine $d$-dimensional tori bundle over a manifold $M$. Let $(g_t)_{t \in \R}$ 
be a compatible flow, and $\mu$ be an absolutely continuous compatible probability measure.

\smallskip

If $\Leb (\bigcup_{\xi \in \Z^d - \{0\}} \{ \de \langle \xi, v \rangle = 0\}) = 0$ on $M$, then the dynamical system 
$(\Omega, \mu, (g_t))$ exhibits Keplerian shear.
\end{theorem}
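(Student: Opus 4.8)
The plan is to reduce everything to a Fourier-analytic computation on each torus fibre, using Lemma~\ref{lem:LemmeAnalyseFonctionnelle} to restrict attention to a convenient dense family of observables. First I would identify the invariant $\sigma$-algebra $\Ical$: a function is invariant under the flow if and only if, in each compatible chart, it is $\mu$-almost everywhere independent of the torus coordinate $y$ on those fibres where the translation is irrational, so that $\Ebb_\mu(f|\Ical)$ amounts to averaging out the nonresonant Fourier modes along $\Tbb^d$ while keeping the resonant ones. The hypothesis that $\Leb\big(\bigcup_{\xi \in \Z^d \setminus \{0\}} \{\de\langle\xi,v\rangle = 0\}\big) = 0$ should guarantee that, for $\Leb$-almost every $x \in M$, the direction $v(x)$ is such that no nonzero frequency $\xi$ is locally resonant, so $\Ical$ is (up to $\mu$-null sets) the pullback $\pi^{-1}(\Bcal_M)$ of the Borel $\sigma$-algebra on the base, and $\Ebb_\mu(f|\Ical)$ is just the fibrewise average over $\Tbb^d$.

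The key steps, in order, are as follows. Because $\mu$ is compatible and absolutely continuous, it is a local product $(\pi_*\mu)\otimes\Leb_{\Tbb^d}$, so I would work in a single chart $\psi_U$ and take as the dense test family functions of the form $f(x,y) = a(x)\,e^{2\pi\ii\langle\xi,y\rangle}$ with $a$ smooth on $U$ and $\xi \in \Z^d$; their finite linear combinations span a dense subspace of $\Lbb^2$, and by a partition-of-unity argument the global statement follows from the local one. Under the flow, $f \circ g_t(x,y) = a(x)\,e^{2\pi\ii\langle\xi,y+tv(x)\rangle} = a(x)\,e^{2\pi\ii t\langle\xi,v(x)\rangle}\,e^{2\pi\ii\langle\xi,y\rangle}$. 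For $\xi = 0$ this is constant in $t$ and already equals its fibre-average. For $\xi \neq 0$, I would test against another such function $g(x,y)=b(x)\,e^{2\pi\ii\langle\eta,y\rangle}$; the $y$-integral forces $\eta = \xi$, leaving
\begin{equation*}
\langle g, f\circ g_t\rangle = \int_M a(x)\overline{b(x)}\,e^{2\pi\ii t\langle\xi,v(x)\rangle}\dd(\pi_*\mu)(x).
\end{equation*}
The task is then to show this oscillatory integral tends to $0$ as $t \to +\infty$, which is exactly the statement that $\Ebb_\mu(f|\Ical) = 0$ for the nonzero modes under the stated hypothesis.

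The main obstacle is the convergence of that oscillatory integral to zero, and this is where the hypothesis $\Vol_M(\de\langle\xi,v\rangle = 0) = 0$ does the work. I would appeal to a van der Corput / Riemann--Lebesgue type argument: the phase $\phi_\xi(x) = \langle\xi,v(x)\rangle$ has gradient vanishing only on a $\Leb$-null set, so away from a small neighbourhood of that critical set one integrates by parts (or invokes the fact that, by the stationary-phase philosophy, a $\Ccal^1$ phase with almost-everywhere nonvanishing differential yields decay of $\int a\,e^{2\pi\ii t\phi}$). The delicate point is that we have only $\Ccal^1$ regularity and no lower bound on $|\de\phi_\xi|$, so genuine decay rates are unavailable; instead I would argue qualitatively, splitting $M$ into the region $\{|\de\phi_\xi| > \epsilon\}$, where a change of variables sending $\phi_\xi$ to a coordinate plus Riemann--Lebesgue gives vanishing in $t$, and the region $\{|\de\phi_\xi| \le \epsilon\}$, whose measure tends to $0$ as $\epsilon \to 0$ precisely because the critical set is $\Leb$-null. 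Controlling the first region uniformly while shrinking the second, then sending $\epsilon \to 0$, should yield $\lim_{t\to+\infty}\langle g, f\circ g_t\rangle = 0$; Lemma~\ref{lem:LemmeAnalyseFonctionnelle} then upgrades this to weak convergence for all $\Lbb^2$ functions, which is exactly Keplerian shear.
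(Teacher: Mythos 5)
Your proposal is correct, and it shares the paper's skeleton: identify $\Ical$ with $\pi^{-1}(\Bcal_M)$ up to $\mu$-null sets, use Lemma~\ref{lem:LemmeAnalyseFonctionnelle} to reduce to single Fourier modes along the fibres, and kill the nonzero modes by showing the oscillatory integral $\int a\overline{b}\,e^{2\pi\ii t\langle\xi,v\rangle}\dd(\pi_*\mu)$ vanishes as $t\to+\infty$. Where you genuinely differ is in how the critical set $\{\de\langle\xi,v\rangle=0\}$ is neutralized. The paper excises it at the level of the test family: for each frequency $\xi$ it uses the local normal form of submersions to build charts $\varphi_{ij}^\xi$ in which $\langle\xi,v\rangle$ \emph{is} the first coordinate, and takes observables split along these adapted coordinates; the correlation then becomes a one-dimensional Fourier transform of an $\Lbb^1$ function, and Riemann--Lebesgue plus dominated convergence finish the proof with no further limiting procedure. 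You instead keep a frequency-independent test family and push the work into the estimate, splitting the base into $\{|\de\langle\xi,v\rangle|>\epsilon\}$ --- where rectification chart by chart plus Riemann--Lebesgue gives decay in $t$ (note you need a countable family of rectifying charts there and a dominated-series argument to sum the contributions) --- and $\{|\de\langle\xi,v\rangle|\le\epsilon\}$, whose $\pi_*\mu$-mass bounds its contribution uniformly in $t$ and tends to zero with $\epsilon$ by continuity from above, since the critical set is null. Your route costs an extra $\limsup$-then-$\epsilon\to 0$ step but buys a simpler, $\xi$-independent family of observables; the paper's route needs a $\xi$-dependent family (hence a slightly heavier density and disjointness discussion up front) but makes the final computation a bare application of Riemann--Lebesgue. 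Two small points of care: first, both arguments need the implication that $\Leb(\{\de\langle\xi,v\rangle=0\})=0$ forces $\Leb(\{\langle\xi,v\rangle=0\})=0$ (a $\Ccal^1$ level set of positive measure consists almost everywhere of critical points), which is what gives equidistribution on almost every fibre and the claimed form of $\Ical$; second, if you globalize by a partition of unity with overlapping charts rather than by a disjoint cover up to null sets, frequencies transform by $(A_{U,V}^*)^{-1}$ across overlaps, so matching modes requires the chart-independence of the family $\{\langle\xi,v\rangle\}_{\xi\neq 0}$ --- the disjoint cover used in the paper avoids this bookkeeping.
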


\begin{proof}\quad

Assume that $\Leb (\bigcup_{\xi \in \Z^d - \{0\}} \{ \de \langle \xi, v \rangle = 0\}) = 0$. 
Then $\Leb (\bigcup_{\xi \in \Z^d - \{0\}} \{ \langle \xi, v \rangle = 0\}) = 0$, so $(g_t (x,y))_{t \in \R}$ 
equidistributes in $\{x\} \times \Tbb^d$ for Lebesgue-almost every $x$. Hence, up to completion 
by the measure $\mu$, the invariant $\sigma$-algebra of the flow is $\Ical := \pi^* \Bcal_M$, 
where $\Bcal_M$ is the Borel $\sigma$-algebra of $M$.

\smallskip

Our goal is to find a family of observables which is large enough to generate a dense subset of $\Lbb^2 (\Omega, \mu)$, 
and specific enough to make our computations manageable. Roughly, we choose a specific frequency in 
the direction of the torus $\Tbb^d$. Under the hypothesis of the theorem, we can rectify the 
differential form $\langle \xi, v \rangle$ so that it has a very simple expression. Then we  
choose observables which split into an observable $a$ in the direction of $\langle \xi, v \rangle$, 
and another observable $b$ in the direction of the kernel. The later observable $b$ does not see the shearing at all, 
so the shearing only affects $a$.

\smallskip

Let $(U_i, \varphi_i)_{i \in I}$ be a countable cover of $M$ by disjoint open 
charts\footnote{The goal of this first decomposition is only to get well-defined speed 
functions $v_{\psi_i}$, and can be bypassed if the fibre bundle is trivial.}, 
up to a Lebesgue-negligible set, with $\varphi_i : U_i \to W_i \subset \R^n$. 
Let $\psi_i : \pi^{-1} (U_i) \to U_i \times \Tbb^d$ be a family of triviliazing charts 
for $\Omega$, and let $v_i := v_{\psi_i}$.

\smallskip

For $\xi \in \Z^d \setminus \{0\}$, let $V_i^\xi := V_i \cap \{ \de \langle \xi, v_i \rangle = 0\}$. 
Using the local normal form of submersions, we can find a finite or countable family $(V_{ij}^\xi)_{j \in J(i,\xi)}$ 
of open sets which are pairwise disjoint, cover $V_i^\xi$ up to a Lebesgue-negligible set, and with charts 
$\varphi_{ij}^\xi : V_{ij}^\xi \to W_{ij}^\xi \subset \R^n$ such that $\langle \xi, v_i \rangle \circ \varphi_{ij}^{\xi,-1} (x) = x_1$.
For $\xi=0$, we choose $J(i,\xi)$ to be a singleton and take $V_{ij}^0:=V_i$.

\smallskip

Given a point $p \in \R^n$, we write $p_x$ its first coordinate in $\R^n$, 
and $p_y$ for its remaining $n-1$ coordinates in $\R^n$. Given a point $p \in M \times \Tbb^d$, 
we write $p_z$ for its coordinate in $\Tbb^d$. We apply Lemma~\ref{lem:LemmeAnalyseFonctionnelle}, 
with the Banach space $\Bcal = \Bcal^* = \Lbb^2 (\Omega, \mu)$, and:
\begin{equation*}
E 
= E^* 
= \bigcup_{\substack{i \in I \\ \xi \in \Z^d \\ j \in J (i,\xi)}} \left\{a ((\varphi_{ij}^\xi \circ \pi)_x) b ((\varphi_{ij}^\xi \circ \pi)_y) e^{2\pi i \langle \xi, \psi_{i,z} \rangle} : \ a, b \in \Lbb^\infty, ab \in \Lbb^\infty (W_{ij}^\xi, \Leb) \right\}.
\end{equation*}

\smallskip

Let $f_j = a_j b_j e^{\langle \xi_j, \cdot \rangle}$, with $j \in \{1,2\}$, be in $E$.
If the corresponding indices $i \in I$ are different, then $f_1$ and $f_2 \circ g_t$ 
have disjoint support for all $t$, so $\Ebb_\mu (\overline{f}_1 \cdot f_2 \circ g_t) = 0 = \Ebb_\mu (\overline{f}_1 \cdot \Ebb_\mu (f_2 | \Ical))$ 
for all $t \in \R$. We can thus assume without loss of generality that they are supported 
by the same open set $\pi^{-1} (V_i)$.

\smallskip

If the corresponding frequencies $\xi_j \in 2\pi\Z^d$ are different, then the integral of $\overline{f}_1 \cdot f_2 \circ g_t$ 
on each torus $\Tbb^d$ vanishes, and a least one of $\Ebb_\mu (\overline{f}_2 | \Ical)$ or $\Ebb_\mu (f_2 | \Ical)$ 
vanishes, so for all $t \in \R$:
\begin{equation*}
\Ebb_\mu (\overline{f}_1 \cdot f_2 \circ g_t) 
= 0 
= \Ebb_\mu (\Ebb_\mu (\overline{f}_1 | \Ical) \Ebb_\mu (f_2 | \Ical)) 
= \Ebb_\mu (\overline{f}_1 \cdot \Ebb_\mu (f_2 | \Ical)).
\end{equation*}
We can thus assume without loss of generality that their frequencies $\xi_j$ are the same; let us denote it by $\xi$. 
If $\xi = 0$, then $f_1$ and $f_2$ are invariant under the flow, so there is nothing more to prove. 
We further assume that $\xi \neq 0$.

\smallskip

If the corresponding indices $j \in J(i, \xi)$ are different, then the supports of $f_1$ and $f_2 \circ g_t$ 
are disjoint for all $t$, so then again there is nothing more to prove. 
We thus fruther assume that these indices are the same.

\smallskip

Write $h_{ij}^\xi := \de (\varphi_{ij,*}^\xi \pi_* \mu) / \de \Leb \in \Lbb^1 (W_{ij}^\xi, \Leb)$. Then, for all $t \in \R$:
\begin{align*}
\Ebb_\mu (\overline{f}_1 \cdot f_2 \circ g_t) & = \int_{W_{ij}^\xi} \overline{a}_1(x) \overline{b}_1(y) a_2(x) b_2(y) \int_{\Tbb^d} e^{2\pi i \langle \xi, z+tv_i \circ \varphi_{ij}^{-1}(x,y)-z \rangle} \dd z h_{ij}^\xi (x,y) \dd x \dd y \\
& = \int_{W_{ij}^\xi \cap \{0\} \times \R^{n-1}} (\overline{b}_1 b_2) (y) e^{2\pi i \langle \xi, tv_i \circ \varphi_{ij}^{-1}(0,y) \rangle} \int_{W_{ij}^\xi \cap (y+\R\times\{0\})} (\overline{a}_1 a_2)(x) e^{ixt} h_{ij}^\xi (x,y) \dd x \dd y.
\end{align*}
The function $x \mapsto (\overline{a}_1 a_2)(x) h_{ij}^\xi (x,y)$ is integrable for almost every $y$. 
By the Riemann-Lebesgue lemma, the inner integral decay to $0$ as $t \to \pm \infty$. The inner integral 
is bounded by:
\begin{equation*}
\norm{\overline{a}_1 a_2}{\Lbb^\infty} \int_{W_{ij}^\xi \cap (y+\R\times\{0\})} h_{ij}^\xi (x,y) \dd x,
\end{equation*}
which is integrable as a function of $y$. Hence, by the dominated convergence theorem, 
\begin{equation*}
\lim_{t \to \pm \infty} \Ebb_\mu (\overline{f}_1 \cdot f_2 \circ g_t) 
= 0 
= \Ebb_\mu (\overline{f}_1 \cdot \Ebb_\mu (f_2| \Ical)). \qedhere
\end{equation*}
\end{proof}

\subsection{Genericity}
\label{subsec:FlotTM}

We check in this subsection that the sufficient condition in Theorem~\ref{thm:ClassiquePrincipal} 
is $\Ccal^r$-generic for all $r \in [1, +\infty]$. Given a $\Ccal^r$ affine 
tori bundle $\Omega$, we begin by endowing the space of $\Ccal^r$ compatible flows with a topology.

\smallskip

Let $r \in [1, +\infty]$, and $\pi: \Omega \to M$ be a $\Ccal^r$ affine $d$-dimensional tori bundle 
over a manifold $M$. Let $(U_i)_{i \in I}$ be a locally finite open cover of $M$ with trivializing charts 
$\varphi_i : U_i \to W_i \subset \R^n$. Let $(K_i)_{i \in I}$ be a cover of $M$ by compact sets 
subordinated to $(U_i)_{i \in I}$.

\smallskip

Denote by $\Fcal^r (M, \R^d)$ the set of $\Ccal^r$ compatible flows on $\Omega$. For each $v \in \Fcal^r (M, \R^d)$, 
there is a unique family of function $(v_i)_{i \in I}$ which generates the flow, where each $v_i$ 
belongs to $\Ccal^r (U_i,\R^d)$. A sequence $(v_n)$ of elements of $\Fcal^r (M, \R^d)$ converges 
to $v \in \Fcal^r (M, \R^d)$ if, for all $i \in I$, all the derivatives of $(v_{n,i})_{n \geq 0}$ (up to order $r$) 
converge to those of $v$ uniformly on each $K_i$. This topology does not depends on the choice of 
the charts $(U_i)_{i \in I}$ nor on that of the compacts $(K_i)_{i \in I}$, and makes $\Fcal^r (M, \R^d)$ 
a Baire space.

\begin{proposition}\quad
\label{prop:BaireGenericite}

Let $r \in [1, +\infty]$. Let $\pi: \Omega \to M$ be a $\Ccal^r$ affine $d$-dimensional tori bundle over a manifold $M$. 

\smallskip

For a Baire generic subset of compatibles flows in $\Fcal^r (M, \R^d)$, the dynamical system 
$(\Omega, \mu, (g_t))$ exhibits Keplerian shear for all absolutely continuous compatible measures $\mu$.
\end{proposition}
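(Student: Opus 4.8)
The plan is to show that the set of compatible flows satisfying the hypothesis of Theorem~\ref{thm:ClassiquePrincipal},
\[
\Gcal := \left\{ v \in \Fcal^r(M,\R^d) : \Leb\Big( \bigcup_{\xi \in \Z^d\setminus\{0\}} \{\de\langle\xi,v\rangle = 0\}\Big) = 0 \right\},
\]
is residual (a dense $G_\delta$). The conclusion then follows immediately from Theorem~\ref{thm:ClassiquePrincipal}, since membership in $\Gcal$ depends only on $v$, so any $v \in \Gcal$ yields Keplerian shear for every absolutely continuous compatible $\mu$. Because $\Z^d\setminus\{0\}$ is countable, a countable union of null sets is null, and the cover $(K_i)_{i\in I}$ is countable, one has $\Gcal = \bigcap_{\xi\neq 0}\bigcap_{i\in I}\Gcal_{\xi,i}$, where $\Gcal_{\xi,i} := \{v : \Leb(\{x\in K_i : \de\langle\xi,v_i\rangle(x)=0\})=0\}$ (the holonomy only permutes the family $\{\langle\xi,\cdot\rangle\}_\xi$, so it suffices to test the condition chart by chart). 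As $\Fcal^r(M,\R^d)$ is a Baire space, it is enough to prove that each $\Gcal_{\xi,i}$ is a dense $G_\delta$.

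For the $G_\delta$ structure I would fix $\xi\neq 0$ and $i$, abbreviate $u := \langle\xi,v_i\rangle$ and $K := K_i$, and set for $m,k\geq 1$
\[
\Ocal_{m,k} := \left\{ v : \Leb\big(\{x\in K : |\de u(x)| \leq 1/k\}\big) < 1/m \right\}.
\]
Since $\varphi_i(K)$ has finite Lebesgue measure and, for $r\geq 1$, a $\Ccal^r$-small perturbation of $v$ changes $\de u$ by an arbitrarily small amount uniformly on $K$, a continuity-from-above argument shows that $v\mapsto \Leb(\{|\de u|\leq 1/k\}\cap K)$ is upper semicontinuous; the non-strict inequality is exactly what makes each $\Ocal_{m,k}$ open. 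Because $\{\de u = 0\}=\bigcap_k \{|\de u|\leq 1/k\}$ with finite measure, continuity from above gives $\Gcal_{\xi,i}=\bigcap_m\bigcup_k\Ocal_{m,k}$, a $G_\delta$.

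For density I would perturb $v$ by a compatible flow supported in the single chart $U_i$. Choose $\eta\in\R^d$ with $\langle\xi,\eta\rangle=1$, a bump function $\rho\in\Ccal_c^\infty(U_i)$ equal to $1$ on a neighbourhood of $K_i$, and for $a\in\R^n$ let $w_a$ be the compatible flow with $(w_a)_i = \rho\,\langle a,\varphi_i(\cdot)\rangle\,\eta$, extended to the other charts through the holonomy and by $0$ off $\Supp \rho$. Then $w_a\in\Fcal^r(M,\R^d)$, $w_a\to 0$ as $a\to 0$, and on $K_i$ one has $\de\langle\xi,(v+w_a)_i\rangle = \de u + a$. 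By Tonelli,
\[
\int_{\R^n} \Leb\big(\{x\in K_i : \de u(x) = -a\}\big)\dd a = \int_{K_i}\Leb_{\R^n}(\{-\de u(x)\})\dd x = 0,
\]
so for almost every $a$ — in particular for arbitrarily small $a$ — the flow $v+w_a$ lies in $\Gcal_{\xi,i}$. Hence $\Gcal_{\xi,i}$ is dense, and each $\bigcup_k\Ocal_{m,k}$, containing it, is open and dense. By Baire, $\Gcal_{\xi,i}$ is residual, and therefore so is $\Gcal$.

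The main obstacle is the density step, whose crux is the Tonelli identity above: translating the gradient of $u$ by a generic constant vector destroys the critical set, and one must verify that this translation is realized by a genuine global compatible flow — this is where the partition-of-unity bump function and the holonomy bookkeeping (the relation $v_{\psi_V}=A_{U,V}v_{\psi_U}$ and the cocycle condition on triple overlaps) are needed to keep the perturbation inside $\Fcal^r(M,\R^d)$. A secondary point requiring care is the openness of the $\Ocal_{m,k}$, which hinges on using the closed condition $|\de u|\leq 1/k$ together with continuity from above of finite measures.
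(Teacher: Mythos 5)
Your proposal is correct, and its skeleton coincides with the paper's: reduce via Theorem~\ref{thm:ClassiquePrincipal} to a chart-by-chart, frequency-by-frequency statement, exhibit the good set as a $G_\delta$, and then perturb inside a single chart by a bump-function flow transported to the other charts by the holonomy. Your "openness" step is in fact literally complementary to the paper's "closedness" step: your $\Ocal_{m,k}^c$ is exactly the paper's set $B_{\xi,i,n,m}$ (up to relabelling indices), and where the paper proves closedness via inner regularity of Lebesgue measure plus compactness, you prove openness via uniform convergence of the differentials plus continuity from above -- equivalent content, both valid. The genuine divergence is the density step. The paper uses a \emph{one-parameter} family $v_i(t) = v_i + t\,x_1\chi_i\,\xi$, whose effect is to shift $\de\langle\xi, v_i\circ\varphi_i^{-1}\rangle$ by $t\norm{\xi}{}^2 e_1^*$, and then a pigeonhole argument: among $\lceil n\Leb(\varphi_i(K_i))\rceil+1$ well-spaced values of $t$ the bad sets are pairwise disjoint, so one of them has measure $<1/n$; this only escapes each closed set $B_{\xi,i,n,m}$ separately, along a sequence $t_m \to 0$, which shows $\bigcap_m B_{\xi,i,n,m}$ has empty interior. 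You instead use an $n$-parameter family shifting the gradient by an arbitrary constant $a\in\R^n$, and the Tonelli identity (the graph of $-\de u$ is null in $K_i\times\R^n$) to conclude that \emph{almost every} small $a$ lands the perturbed flow directly in $\Gcal_{\xi,i}$. This is a cleaner and slightly stronger conclusion -- density of the full good set for the given $(\xi,i)$, not merely escape from each bad piece -- at the modest cost of an $n$-dimensional rather than one-dimensional perturbation family; the paper's pigeonhole, conversely, produces explicit admissible parameters. Both arguments require the same holonomy bookkeeping (cocycle condition on triple overlaps) that you correctly flag, and both implicitly use countability of the chart cover when intersecting over $(\xi,i)$.
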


\begin{proof}\quad

We use the criterion of Theorem~\ref{thm:ClassiquePrincipal}. It is enough to prove that, 
for all $\xi \in \Z^d \setminus \{0\}$ and all $i \in I$:
\begin{equation*}
A_{\xi, i} := \{v \in \Fcal^r (M, \R^d): \ \Leb (\{d \langle \xi, v_i \rangle =0\} \cap K_i) = 0\}
\end{equation*}
is Baire generic. But $A_{\xi, i}^c = \bigcup_{n \geq 1} \bigcap_{m \geq 1} B_{\xi, i, n, m}$, 
with:
\begin{equation*}
B_{\xi, i, n, m} = \{v \in \Fcal^r (M, \R^d): \ \Leb (\{\norm{d\langle \xi, v_i \circ \varphi_i^{-1} \rangle}{} \leq 1/m\} \cap \varphi_i(K_i)) \geq 1/n\}.
\end{equation*}
All is left is to prove that $\bigcap_{m \geq 1} B_{\xi, i, n, m}$ is meager. Note that:
\begin{equation*}
B_{\xi, i, n, m}^c 
= \left\{v \in \Fcal^r (M, \R^d): \ \Leb (\{ \norm{d\langle \xi, v_i \circ \varphi_i^{-1} \rangle}{} > 1/m \}\cap \varphi_i(K_i)) > \Leb (\varphi_i(K_i))-1/n \right\}.
\end{equation*}
Let $v \in B_{\xi, i, n, m}^c$. By inner regularity of the Lebesgue measure on $\varphi_i (K_i)$, there exists $K' \subset K_i$ compact 
such that $\norm{d\langle \xi, v_i \circ \varphi_i^{-1} \rangle}{} > 1/m$ on $\varphi_i(K')$ and $\Leb(\varphi_i(K'))> \Leb (\varphi_i(K_i))-1/n$. 
By compactness, for all $v'$ close enough to $v$, we have $\norm{d\langle \xi, v_i' \circ \varphi_i^{-1} \rangle}{} > 1/m$ on $\varphi_i(K')$, 
and thus $v' \in B_{\xi, i, n, m}^c$. Hence, each $B_{\xi, i, n, m}$ is closed. We only need to show that the sets 
$\bigcap_{m \geq 1} B_{\xi, i, n, m}$ have empty interior.

\smallskip

Fix $\xi \in \Z^d \setminus \{0\}$, $i \in I$ and $n \geq 1$. Let $\chi_i \in \Ccal^r (V_i,[0,1])$, 
with $\Supp (\chi_i) \subset V_i$ compact and $\chi_i \equiv 1$ on $\varphi_i(K_i)$. For $t \in \R$, let $v (t)$ 
be defined by:
\begin{align*}
v_i (t) \circ \varphi_i^{-1} (x)
& := v_i \circ \varphi_i^{-1} (x)+ t x_1 \chi_i (x) \xi \ \text{ on } (U_i, \varphi_i),\\
v_j (t) \circ \varphi_j^{-1} (x)
& := v_j \circ \varphi_j^{-1} (x) + t x_1 \chi_i \circ \varphi_i \circ \varphi_j^{-1} (x) \mathbf{1}_{U_i \cap U_j} (x) A_{U_i, U_j} (\xi) \ \text{ on } (U_j, \varphi_j), \ j \neq i.
\end{align*}
Then $\lim_{t \to 0} v(t) = v$ in $\Fcal^r (M, \R^d)$. On $\varphi_i(K_i)$, we have $\chi_i \equiv 1$, 
therefore:
\begin{equation*}
d\langle \xi, v_i (t) \circ \varphi_i^{-1} \rangle 
= d\langle \xi, v_i (0) \circ \varphi_i^{-1} \rangle + t \norm{\xi}{}^2 e_1^*,
\end{equation*}
with $e_1^* = (1,0, \ldots, 0)$. By the pigeonhole principle, for all $m \geq 1$, at least one of the functions 
$v(2k/(\norm{\xi}{}^2 m))$, with $0 \leq k \leq \lceil n \Leb (\varphi_i(K_i)) \rceil$, belongs 
to $B_{\xi, i, n, m}^c$. Thus there exists a sequence $(t_m)_{m \geq 1}$ such that 
$v(t_m) \in B_{\xi, i, n, m}^c$ and $\lim_{m \to + \infty} t_m = 0$. This finishes the proof.
\end{proof}

\begin{remark}\quad

If $\Omega = M \times \Tbb^d$ and $r \geq 2$, we can conclude using the (well known, but more difficult to prove) 
fact that a generic function in $\Ccal^r (M, \R)$ is Morse.
\end{remark}

\subsection{Examples}
\label{subsec:Exemples}

The simplest non-trivial example of Keplerian shear is given by the map 
\begin{equation*}
T 
= \left( \begin{array}{cc}
1 & 1 \\
0 & 1
\end{array} \right),
\end{equation*}
acting on $\Tbb^2 = \{(x,y): \ x,y \in \Tbb\}$. This transformation preserves the Lebesgue measure, 
as well as all the circles $\Tbb \times \{y\}$. Keplerian shear is rather easy to 
prove\footnote{This example has been used with some success by the author in a graduate-level exercise course in ergodic theory.}, 
as there is no need to play with charts; one can use directly the Fourier basis on $\Lbb^2 (\Tbb^2, \Leb)$, 
which behaves well under $T$. A slightly more sophisticated version of this argument is 
used in Sub-subsection~\ref{subsubsec:VitesseTransvection} to compute the speed of decay of correlations.

\smallskip

All systems are not that simple. Besides genericity, Theorem~\ref{thm:ClassiquePrincipal} 
provides a useful criterion to prove that a given dynamical system exhibits Keplerian shear. 
We now use it to prove Keplerian shear for two dynamical systems: the billiard in the unit ball $\overline{B}_n \subset \R^n$, 
and the unit speed geodesic flow on $\Tbb^n$ (with the flat metric).

\subsubsection{Billiard in a ball}
\label{subsubsec:Billard}

Let $\overline{B}_n$ be the unit ball in $\R^n$, with $n \geq 2$. Consider a 
particle moving with unit speed in $B_n$, which reflects specularly on the boundary $\Sbb_{n-1}$. 
The phase state is an orbifold $T^1 \overline{B}_n$, and the flow $(g_t)_{t \in \R}$ preserves the Liouville measure 
$\mu_n$ (which here is essentially the Lebesgue measure on $B_n \times \Sbb_{n-1}$).

\begin{proposition}\quad

The dynamical system $(T^1 \overline{B}_n, \mu_n, (g_t)_{t \in \R})$ exhibits Keplerian shear.
\end{proposition}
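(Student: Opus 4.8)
The plan is to reduce the billiard dynamics to the setting of Theorem~\ref{thm:ClassiquePrincipal} by finding an explicit action-angle-type description of $(T^1 \overline{B}_n, \mu_n, (g_t))$ as a compatible flow on an affine tori bundle, and then to verify the non-degeneracy condition $\Leb(\bigcup_{\xi \neq 0} \{\de \langle \xi, v \rangle = 0\}) = 0$.

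First I would exploit the integrability of the billiard in a ball. A unit-speed trajectory between bounces is a chord of $\overline{B}_n$, and specular reflection preserves the angle the chord makes with the boundary sphere. Consequently the trajectory stays forever in a fixed two-dimensional plane through the relevant geometry, and the conserved quantities are the angular momentum (equivalently, the distance $p \in [0,1)$ from the center to the line carrying the chord) together with the two-plane in which the motion takes place (an element of the Grassmannian of oriented $2$-planes, as already flagged in the excerpt's discussion of $T^1 \Sbb_n$). Each orbit is confined to an annular caustic region $\{p \le \|x\| \le 1\}$ inside this plane, and the motion is periodic: the chord endpoints rotate around the circle $\Sbb_{n-1}$ by a fixed angular step at each bounce, so the dynamics on each invariant level set is conjugate to a translation on a $1$-dimensional torus $\Tbb^1$. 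This is precisely an affine $1$-torus bundle ($d=1$) over the base $M$ parametrizing the caustic data (the impact parameter $p$ and the $2$-plane), after discarding the measure-zero sets of diametral orbits ($p=0$) and tangent/degenerate configurations.

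Next I would identify the translation speed $v$ on each torus, which is the reciprocal of the period of the orbit, i.e.\ essentially the frequency with which the particle returns to a fixed angular position on the circular cross-section. Here the key geometric input is that this frequency is a non-constant function of the impact parameter $p$: the rotation angle per bounce, and hence the period, genuinely varies with $p$. Since $d=1$, the condition to check collapses to showing that for the single relevant frequency $v = v(p)$ (as a function on $M$), the set where $\de v = 0$, that is where the derivative of the period with respect to the continuous caustic parameter vanishes, has zero Lebesgue measure. Because the dependence of the rotation number on $p$ is given by an explicit analytic (indeed essentially arccosine-type) formula coming from the chord geometry, its derivative is a nonzero analytic function, so its zero set is discrete and in particular Lebesgue-negligible.

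The main obstacle I anticipate is not the non-degeneracy computation itself but the bookkeeping needed to present the billiard cleanly as a \emph{compatible} flow on a \emph{compatible} affine tori bundle in the precise sense of the definitions above: one must check that the Liouville measure $\mu_n$ disintegrates over the base $M$ as $(\pi_* \mu_n) \otimes \Leb_{\Tbb^1}$ (i.e.\ that the angle coordinate is genuinely uniform, which the choice of time-normalized angle variable guarantees), that the trivializations glue with transition maps in $\Tbb^1 \rtimes \GL_1(\Z)$, and that the singular strata (diametral orbits, grazing orbits, and the reflection locus itself as an orbifold) are all Lebesgue-null so that the reduction to the smooth setting of Theorem~\ref{thm:ClassiquePrincipal} is legitimate. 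Once the system is set up as a compatible flow with action variables $(p, \text{plane})$ and a single angle, invoking Theorem~\ref{thm:ClassiquePrincipal} with the computed $v(p)$ finishes the proof.
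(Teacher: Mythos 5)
Your reduction to planar motion and your identification of the conserved quantities (impact parameter and the $2$-plane of the trajectory) match the paper's first step, but the core of your argument contains a genuine error: the invariant level sets of the billiard \emph{flow} are $2$-dimensional tori, not circles, so the correct structure is an affine tori bundle with $d=2$, not $d=1$. Fixing the angle of incidence $\theta$ and the plane, the flow is the suspension of the boundary rotation (by angle $\pi-2\theta$) over a constant roof (the chord length $2\cos\theta$); this is a translation flow on $\Tbb^2$, and it is periodic only when the rotation number $1/2-\theta/\pi$ is rational, i.e.\ on a null set of parameters. Your claim that ``the motion is periodic'' is what leads you to the wrong bundle, and a dimension count exposes the problem: your proposed total space has dimension $\dim \widetilde{Gr}(2,n)+1+1 = 2n-2$, while $T^1 \overline{B}_n$ has dimension $2n-1$. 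The discrepancy cannot be absorbed into the base, since the space of orbits of an irrational translation flow on $\Tbb^2$ is not a standard Borel space; more decisively, your model flow (in which every orbit is closed) has a different ergodic decomposition from the billiard flow (whose ergodic components are, for almost every $\theta$, full $2$-tori), so no isomorphism can identify the two. What you have actually described is the billiard \emph{map} on the boundary phase space, where fixing $(\theta,\text{plane})$ does yield a circle rotation --- but the proposition concerns the flow, which is the suspension of that map.

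The repair is the paper's route: discard the null set of trajectories through the origin, decompose $T^1\overline{B}_n$ as a disjoint union, over $2$-planes, of copies of $T^1\overline{B}_2$ (a disjoint union of systems with Keplerian shear has Keplerian shear), and then exhibit $T^1\overline{B}_2$ as $(-\pi/2,\pi/2)\times\Tbb^2$ with the compatible flow $\tilde g_t(\theta,x)=(\theta, x+tv(\theta))$, where $v(\theta)=2\cos(\theta)\,(1,\,1/2-\theta/\pi)$. The hypothesis of Theorem~\ref{thm:ClassiquePrincipal} must then be verified for \emph{every} $\xi\in\Z^2\setminus\{0\}$: each $\langle \xi, v'\rangle$ is a non-zero analytic function of $\theta$, so its zero set is discrete, hence Lebesgue-null. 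Note that with $d=2$ your single scalar condition (the period varies with the caustic parameter) is not sufficient even in spirit: one needs joint non-degeneracy of all integer combinations of the two frequencies, the bounce frequency and the rotation frequency, which is precisely what the countable family of conditions indexed by $\xi$ encodes.
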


\begin{proof}\quad

If we exclude trajectories which go through the origin, then any given trajectory lie in the unique 
plane generated by the position and the speed at any given time. Restricted to any such plane, 
the billiard is isomorphic to the billiard in $\overline{B}_2$. Since a disjoint union of systems 
with Keplerian shear still has Keplerian shear, it is enough to prove that 
$(T^1 \overline{B}_2, \mu_2, (g_t)_{t \in \R})$ has Keplerian shear.

\smallskip

The space $T^1 \overline{B}_2$ is $3$-dimensional. The angle $\theta \in (-\pi/2, \pi/2)$ 
with which the trajectories hit the boundary is an invariant of the flow. Hence, 
$(T^1 \overline{B}_2, \mu_2, (g_t)_{t \in \R})$ is isomorphic to 
$(\Omega, \tilde{\mu}, (\tilde{g}_t)_{t \in \R})$, where:
\begin{itemize}
\item $\Omega = (-\pi/, \pi/2) \times \Tbb^2$;
\item $\tilde{\mu} = 2^{-1}\cos(\theta) \de \theta \otimes \Leb_{\Tbb^2}$;
\item $\tilde{g}_t (\theta,x) = (\theta, x+tv(\theta))$,
\end{itemize}
and $v (\theta) = 2 \cos (\theta) (1, 1/2-\theta/\pi)$. In particular,
\begin{equation*}
v' (\theta) 
= - 2 \sin (\theta) \left( \begin{array}{c} 1 \\ \frac{1}{2}-\frac{\theta}{\pi} + \frac{1}{\pi} \cot (\theta) \end{array}\right).
\end{equation*}
For all $\xi \in \Z^2 \setminus \{0\}$, the function $\langle \xi, v' \rangle$ is analytic and non-zero, and thus 
its zero set is discrete. By Theorem~\ref{thm:ClassiquePrincipal}, the system $(T^1 \overline{B}_2, \mu_2, (g_t)_{t \in \R})$ 
has Keplerian shear.
\end{proof}

A similar proof applies to the billiard in an ellipsoid, or the geodesic flow on an ellipsoid.

\subsubsection{Geodesic flow on the torus}
\label{subsubsec:FlotTM}

The second example we discuss is the unit speed geodesic flow on the torus $\Tbb^n$, 
with $n \geq 1$. This flow, again, preserves the Liouville measure. 

\begin{proposition}\quad
\label{prop:FlotToreCisaillement}

The dynamical system $(T^1 \Tbb^n, \Liouv, (g_t)_{t \in \R})$ exhibits Keplerian shear.
\end{proposition}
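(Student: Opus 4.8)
The plan is to realize the unit speed geodesic flow on $T^1 \Tbb^n$ as a compatible flow on an affine tori bundle and then apply Theorem~\ref{thm:ClassiquePrincipal}. First I would set up coordinates: a unit speed geodesic on $\Tbb^n = \R^n / \Z^n$ is determined by a base point $x \in \Tbb^n$ and a direction $\theta \in \Sbb_{n-1}$, and the flow is $g_t(x, \theta) = (x + t\theta, \theta)$. Thus the phase space $T^1 \Tbb^n \cong \Tbb^n \times \Sbb_{n-1}$ fibers over the direction sphere $M = \Sbb_{n-1}$, with fibers the tori $\Tbb^n$, and the flow acts by translation on each fiber in the direction $\theta$ itself. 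The Liouville measure is (up to normalization) $\Leb_{\Tbb^n} \otimes \sigma_{\Sbb_{n-1}}$, where $\sigma$ is the rotation-invariant measure on the sphere; this is absolutely continuous with respect to Lebesgue on $M$ and is a product on each chart, hence compatible. So the speed function is simply the inclusion $v: \Sbb_{n-1} \hookrightarrow \R^n$, $v(\theta) = \theta$.

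The core of the argument is then to verify the hypothesis of Theorem~\ref{thm:ClassiquePrincipal}, namely that for every $\xi \in \Z^n \setminus \{0\}$, the set where the differential $\de \langle \xi, v \rangle$ vanishes on $\Sbb_{n-1}$ has zero Lebesgue (i.e.\ $\sigma$-) measure. Here $\langle \xi, v \rangle(\theta) = \langle \xi, \theta \rangle$ is the restriction of a linear functional to the sphere. The key step is to compute its critical points: $\langle \xi, \cdot\rangle|_{\Sbb_{n-1}}$ is a height function whose only critical points are the two poles $\pm \xi / \|\xi\|$, since the intrinsic gradient is the projection of $\xi$ onto the tangent space $T_\theta \Sbb_{n-1}$, which vanishes exactly when $\theta$ is parallel to $\xi$. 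Therefore $\{\de \langle \xi, v \rangle = 0\}$ consists of exactly two points for each $\xi$, and the countable union over $\xi \in \Z^n \setminus \{0\}$ is a countable set, hence $\sigma$-negligible on $\Sbb_{n-1}$ when $n \geq 2$.

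I would then invoke Theorem~\ref{thm:ClassiquePrincipal} directly to conclude Keplerian shear for $n \geq 2$. The case $n = 1$ must be handled separately, since then $\Sbb_0$ is two points and the fiber is a single circle $\Tbb^1$: the geodesic flow on $T^1 \Tbb^1$ is a (pair of) constant-speed translation flow(s) on a torus, which is uniquely ergodic on each orbit but trivially exhibits Keplerian shear because the invariant $\sigma$-algebra already captures the full dynamics (the flow is conjugate to translation at fixed speed, and conditionally to $\Ical$ there is nothing to mix). One should note that $M = \Sbb_{n-1}$ is a manifold without boundary here, so no boundary subtleties arise, though one does discard the measure-zero set of null tangent vectors as in the preceding discussion of $T\Sbb_n$.

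The main obstacle, and the only genuinely substantive point, is the critical-point computation for the linear height function on the sphere --- but this is classical Morse theory (the height function on $\Sbb_{n-1}$ is the standard Morse function with exactly two critical points). The remaining work is bookkeeping: checking compatibility of the Liouville measure with the product structure, and confirming that the holonomy structure group $\Tbb^n \rtimes \GL_n(\Z)$ is respected (in fact the bundle is trivial, $\Tbb^n \times \Sbb_{n-1}$, so there is no holonomy and the global speed function $v(\theta) = \theta$ is well-defined without passing to charts). I expect no difficulty there. The elegance of this example is precisely that it reduces, via Theorem~\ref{thm:ClassiquePrincipal}, to the observation that a generic linear functional is nonconstant along the sphere.
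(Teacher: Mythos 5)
For $n \geq 2$ your argument is correct and is essentially identical to the paper's proof: identify $T^1 \Tbb^n$ with the trivial bundle $\Tbb^n \times \Sbb_{n-1}$ over $M = \Sbb_{n-1}$, observe that the speed function is the inclusion $v(\theta) = \theta$ and that the Liouville measure is compatible, compute that $\de \langle \xi, v \rangle$ vanishes exactly at the two poles $\pm \xi / \norm{\xi}{}$, and invoke Theorem~\ref{thm:ClassiquePrincipal}.

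Your treatment of $n = 1$, however, is wrong --- and in fact the conclusion is false in that case, so the correct move is to exclude it rather than call it trivial. For $n = 1$ the system is the disjoint union of two translation flows at speeds $\pm 1$ on $\Tbb^1$. Each of these flows is ergodic, so the invariant $\sigma$-algebra $\Ical$ is generated by the two connected components; conditionally on $\Ical$, Keplerian shear then demands precisely that each rotation flow be \emph{mixing} --- the opposite of your assertion that ``there is nothing to mix.'' Concretely, taking $f(x,\theta) = e^{2\pi i x} \mathbf{1}_{\theta = 1}$, one has $f \circ g_t = e^{2\pi i t} f$, so $\langle f, f \circ g_t \rangle = e^{2\pi i t} \norm{f}{\Lbb^2}^2$ has no limit as $t \to +\infty$, while $\Ebb(f | \Ical) = 0$. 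This is the same obstruction as for the geodesic flow on the round sphere treated in Section~\ref{subsec:FlotTS}: the flow is periodic, so $f \circ g_t$ cannot converge weakly to an invariant function unless $f$ is already $\Ical$-measurable. Note also that the hypothesis of Theorem~\ref{thm:ClassiquePrincipal} visibly fails when $n = 1$: the ``two critical points'' $\pm \xi / \norm{\xi}{}$ are then all of $\Sbb_0$, a set of full measure. So the proposition should be read with $n \geq 2$ (the surrounding text's ``$n \geq 1$'' notwithstanding), and your claim for $n = 1$ is a genuine error, not a harmless degenerate case.
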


\begin{proof}\quad

The manifold $T^1 \Tbb^n$ is trivializable, and thus isomorphic to $\Tbb^n\times \Sbb_{n-1}$. 
The geodesic flow $(g_t)_{t \in \R}$ acts on $T^1 \Tbb^n$ by:
\begin{equation*}
g_t (x,v) 
= (x+tv, v).
\end{equation*}
Let $\xi \in \Z^n \setminus \{0\}$. Then $\de \langle \xi, v \rangle$ vanishes at only two points, 
which are $\pm \xi / \norm{\xi}{}$. By Theorem~\ref{thm:ClassiquePrincipal}, the system 
$(T^1 \Tbb^n, \Liouv, (g_t)_{t \in \R})$ has Keplerian shear.
\end{proof}

\subsection{Unique ergodicity}
\label{subsec:UniqueErgodicite}

In this subsection, we describe the relation between Keplerian shear and the unique ergodicity 
of a transformation acting on spaces of probability measures, as introduced by F.~Maucourant~\cite{Maucourant:2016}. 
We drop the assumption that the function $v$ generating the flow be $\Ccal^1$: 
here, continuity is enough.

\subsubsection{Definition and relation with Keplerian shear}

Let $\pi : \Omega \to M$ be a compact affine tori bundle, $(g_t)$ a compatible flow on $\Omega$, 
and $\nu \in \Pcal (M)$. Denote by $\Pcal_\nu \subset \Pcal (\Omega)$ the subspace 
of probability measures $\tilde{\mu}$ such that $\pi_* \tilde{\mu} = \nu$, 
and by $\nu \otimes \Leb$ the unique compatible measure on $\Omega$ such that $\pi_* (\nu \otimes \Leb) = \nu$.

\smallskip

Let $G_t := g_{t,*}$ act continuously on $\Pcal (\Omega)$, which is compact when endowed with the weak convergence. 
Since the flow is compatible, $(G_t)$ preserves $\Pcal_\nu$, which is also compact. Note that $\nu \otimes \Leb$ is a 
fixed point of $(G_t)$, so $\delta_{\nu \otimes \Leb}$ is $(G_t)$-invariant.

\begin{theorem}\quad
\label{thm:UniqueErgodicite}

Let $\pi : \Omega \to M$ be a compact affine tori bundle. Let $(g_t)$ be a compatible flow on $\Omega$. 
Let $\nu \in \Pcal (M)$.

\smallskip

The system $(\Omega, \nu \otimes \Leb, (g_t))$ exhibits Keplerian shear if and only if 
$G_t (\mu) \to \nu \otimes \Leb$ for all $\mu \in \Pcal_\nu$. Then $(\Pcal_\nu, (G_t))$ is uniquely ergodic.
\end{theorem}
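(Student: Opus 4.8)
The plan is to reduce both sides of the equivalence to a single family of oscillatory-integral estimates on the base $M$, and then to deduce unique ergodicity from a soft compactness argument. First I would pin down the limit object. Since $\pi\circ g_t=\pi$, every function of the form $h\circ\pi$ is invariant, and on the fibre over $x$ the flow is translation by $tv(x)$; averaging a fibre-character $e^{2\pi i\langle\xi,y\rangle}$ over the closure of this one-parameter subgroup annihilates it unless $\langle\xi,v(x)\rangle=0$. This identifies
\[
\Ebb_{\nu\otimes\Leb}\!\left(a(x)e^{2\pi i\langle\xi,y\rangle}\,\middle|\,\Ical\right)=a(x)\,e^{2\pi i\langle\xi,y\rangle}\,\mathbf{1}_{\{\langle\xi,v\rangle=0\}}(x),
\]
so that, testing against $b(x)e^{2\pi i\langle\xi,y\rangle}$ and invoking Stone--Weierstrass together with Lemma~\ref{lem:LemmeAnalyseFonctionnelle}, Keplerian shear for $\nu\otimes\Leb$ becomes equivalent to the condition
\[
\lim_{t\to+\infty}\int_M g\,e^{2\pi i t\langle\xi,v\rangle}\dd\nu=\int_{\{\langle\xi,v\rangle=0\}}g\dd\nu\qquad\text{for all }\xi\in\Z^d\setminus\{0\},\ g\in\Lbb^1(M,\nu).
\]

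For the forward implication I would fix $\mu\in\Pcal_\nu$, disintegrate it over the base as $\mu=\int_M\mu_x\dd\nu(x)$ with $\mu_x$ a probability measure on the fibre, and expand a test function $f\in\Ccal(\Omega)$ into fibre-characters $f=\sum_\xi f_\xi(x)e^{2\pi i\langle\xi,y\rangle}$. Setting $c_\xi(x):=\int e^{2\pi i\langle\xi,y\rangle}\dd\mu_x(y)$ (so $|c_\xi|\le1$), one obtains $\int f\dd(G_t\mu)=\sum_\xi\int_M f_\xi c_\xi\,e^{2\pi i t\langle\xi,v\rangle}\dd\nu$, whose $\xi=0$ term is exactly $\int_M f_0\dd\nu=\int f\dd(\nu\otimes\Leb)$. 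Approximating $f$ uniformly by finite character sums and using $\|G_t\|\le1$ reduces matters to finitely many frequencies, and the displayed estimate with $g=f_\xi c_\xi$ disposes of each $\xi\neq0$. The delicate point is that this estimate sends the $\xi\neq0$ term to $\int_{\{\langle\xi,v\rangle=0\}}f_\xi c_\xi\dd\nu$, so convergence to $\nu\otimes\Leb$ for every $\mu$ forces $\nu(\{\langle\xi,v\rangle=0\})=0$ for all $\xi\neq0$; showing that this degeneracy locus is $\nu$-null is the heart of the matter, and it is precisely what the converse supplies.

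For the converse I would feed the hypothesis the extremal elements of $\Pcal_\nu$, namely the graph measures $\mu=(\id,s)_*\nu$ for measurable sections $s$. Testing $G_t\mu\to\nu\otimes\Leb$ against $a(x)e^{2\pi i\langle\xi,y\rangle}$ gives $\int_M a(x)e^{2\pi i\langle\xi,s(x)\rangle}e^{2\pi i t\langle\xi,v\rangle}\dd\nu\to0$; as $a$ and $s$ vary, the functions $a(x)e^{2\pi i\langle\xi,s(x)\rangle}$ have dense span in $\Lbb^1(M,\nu)$, so a second application of Lemma~\ref{lem:LemmeAnalyseFonctionnelle} yields $\int_M g\,e^{2\pi i t\langle\xi,v\rangle}\dd\nu\to0$ for all $g$. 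Taking $g=\mathbf{1}_{\{\langle\xi,v\rangle=0\}}$ forces $\nu(\{\langle\xi,v\rangle=0\})=0$, after which the equivalence from the first paragraph holds with both sides equal to zero, i.e.\ Keplerian shear. Thus the convergence statement and Keplerian shear meet exactly on the requirement that $\nu$ charge none of the loci $\{\langle\xi,v\rangle=0\}$, which I expect to be the main obstacle: one must upgrade a statement about correlations of the single measure $\nu\otimes\Leb$ to the behaviour of $G_t\mu$ for arbitrary, possibly singular, $\mu\in\Pcal_\nu$.

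Finally, unique ergodicity follows from the convergence $G_t\mu\to\nu\otimes\Leb$ by a standard argument. The space $\Pcal_\nu$ is compact for the weak topology, $\nu\otimes\Leb$ is a $(G_t)$-fixed point, and for any $(G_t)$-invariant Borel probability measure $P$ on $\Pcal_\nu$ and any $F\in\Ccal(\Pcal_\nu)$, invariance gives $\int F\dd P=\int F(G_t\mu)\dd P(\mu)$ for every $t$. Letting $t\to+\infty$, $F(G_t\mu)\to F(\nu\otimes\Leb)$ pointwise with a uniform bound, so dominated convergence yields $\int F\dd P=F(\nu\otimes\Leb)$; hence $P=\delta_{\nu\otimes\Leb}$, which is the asserted unique ergodicity of $(\Pcal_\nu,(G_t))$.
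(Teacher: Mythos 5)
Your proposal's core components---the reformulation of Keplerian shear for $\nu\otimes\Leb$ as the condition $\lim_{t\to+\infty}\int_M g\,e^{2\pi i t\langle\xi,v\rangle}\dd\nu=\int_{\{\langle\xi,v\rangle=0\}}g\dd\nu$ for all $\xi\neq 0$ and $g\in\Lbb^1(M,\nu)$, the converse via graph measures, and the unique-ergodicity argument---are sound (modulo the chart decomposition needed when the bundle is nontrivial, which you elide and the paper handles by cutting $M$ into countably many disjoint charts). But your forward implication has a genuine gap, exactly where you flagged it, and the patch you offer is circular: when proving that Keplerian shear implies $G_t(\mu)\to\nu\otimes\Leb$, you cannot invoke ``what the converse supplies'', since the converse's hypothesis is the very statement you are trying to establish. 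Worse, the missing fact is not provable: Keplerian shear does \emph{not} imply $\nu(\{\langle\xi,v\rangle=0\})=0$. The trivial flow $v\equiv 0$ exhibits Keplerian shear (every set is invariant, so $f\circ g_t=f=\Ebb(f|\Ical)$), yet $G_t(\mu)=\mu$ for every $\mu\in\Pcal_\nu$; less degenerately, if $v$ vanishes on a set $Z$ with $\nu(Z)\in(0,1)$ and is well-behaved off $Z$, your criterion (hence Keplerian shear) holds by Riemann--Lebesgue, while $\mu:=\nu_{|Z}\otimes\delta_0+\nu_{|Z^c}\otimes\Leb$ lies in $\Pcal_\nu$, is fixed by every $G_t$, and differs from $\nu\otimes\Leb$. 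So the forward implication needs the additional hypothesis that the invariant $\sigma$-algebra equals $\pi^*\Bcal_M$ modulo null sets (equivalently, that every locus $\{\langle\xi,v\rangle=0\}$ is $\nu$-null); once that is granted, your disintegration-and-characters argument does close this direction.

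You should not be discouraged that you could not fill this hole: the paper's own proof conceals the same assumption. Its key step asserts that Keplerian shear forces $G_t(\mu*\rho_\varepsilon)\to\nu\otimes\Leb$ for the fibrewise-mollified measure; writing $h$ for the density of $\mu*\rho_\varepsilon$ with respect to $\nu\otimes\Leb$, what Keplerian shear actually yields is $\lim_{t}\Ebb(f\circ g_t\cdot h)=\Ebb(\Ebb(f|\Ical)\,\Ebb(h|\Ical))$, and this equals $\Ebb(f)$ only when $\Ebb(h|\Ical)=1$; the marginal condition $\pi_*(\mu*\rho_\varepsilon)=\nu$ only guarantees $\Ebb(h|\pi^*\Bcal_M)=1$, so the step is valid precisely when $\Ical=\pi^*\Bcal_M$. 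Beyond this shared defect, your routes genuinely differ, and the comparison is instructive. In the forward direction the paper upgrades from fibrewise absolutely continuous measures to arbitrary elements of $\Pcal_\nu$ by convolving with $\rho_\varepsilon$ and using that $G_t$ commutes with fibrewise convolution, so the Wasserstein error stays below $\varepsilon$ uniformly in $t$; your disintegration $\mu=\int_M\mu_x\dd\nu(x)$ together with the bound $|c_\xi|\leq 1$ accomplishes the same upgrade with no metric apparatus. In the converse the paper identifies $\Ical$ by citing Maucourant's asynchronicity theorem and then tests against the measures $\nu_{|U_i}\otimes(b\dd\Leb)$, whereas your graph measures $(\id,s)_*\nu$ give a self-contained derivation of $\nu(\{\langle\xi,v\rangle=0\})=0$---legitimately, since there it follows from the convergence hypothesis rather than from Keplerian shear---which is arguably the cleaner argument. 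Your closing compactness argument for unique ergodicity is the standard one and is fine; the paper leaves it implicit.
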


\begin{proof}\quad

Let $\pi : \Omega \to M$, $(g_t)$ and $\nu$ be as in the hypotheses of the theorem. 
First, we assume that $(\Omega, \nu \otimes \Leb, (g_t))$ exhibits Keplerian shear.
We can find a countable cover of $M$ by disjoint open charts $(U_i)_{i \in I}$, 
up to a $\nu$-negligible subset. Then all $(U_i \times \Tbb^d, \nu_{|U_i} \otimes \Leb, (g_t))$ 
exhibit Keplerian shear. 


\smallskip

Let $\mu$ be in $\Mcal (U_i \times \Tbb^d)$ with $\pi_* \mu = \nu_{|U_i}$. 
Endow $U_i$ with any bounded Riemannian metric, and $\Tbb^d$ with a flat metric. This yields 
a Riemannian metric on $U_i \times \Tbb^d$ (e.g.\ the product metric), from which we 
get a Wasserstein distance $d_W$, which metrizes the weak convergence.

\smallskip

We denote by $*$ the fiberwise convolution on each torus. Fix $\varepsilon >0$, and let 
$\rho_\varepsilon$ be an absolutely continuous measure supported on 
$\overline{B}_{\Tbb^d} (0, \varepsilon)$. Then $d_W (\rho_\varepsilon, \delta_0) \leq \varepsilon$, 
whence, for all $t$:
\begin {equation*}
d_W (G_t (\mu * \rho_\varepsilon), G_t (\mu)) 
= d_W (\mu * G_t (\rho_\varepsilon), \mu * G_t (\delta_0)) 
\leq \varepsilon.
\end {equation*}
On the other hand, $\mu * \rho_\varepsilon \ll \nu_{|U_i} \otimes \Leb$ and $\pi_* (\mu * \rho_\varepsilon) = \nu$. 
As we see by integrating against test functions, Keplerian shear implies that 
$G_t (\mu * \rho_\varepsilon) \to \nu_{|U_i} \otimes \Leb$ weakly. In particular, 
$d_W (G_t (\mu * \rho_\varepsilon), \nu_{|U_i} \otimes \Leb) \leq \varepsilon$ for all large enough 
$t$, whence $d_W (G_t(\mu),\nu_{U_i} \otimes \Leb) \leq 2 \varepsilon$. As this is true for all $\varepsilon >0$, 
we get $G_t(\mu) \to \nu_{|U_i} \otimes \Leb$. Since this is true for all $i$, 
$G_t (\mu) \to \nu \otimes \Leb$ for all $\mu \in \Pcal_\nu$. Hence, $(\Pcal_\nu, (G_t))$ is uniquely ergodic.

\smallskip

Assume now that $G_t (\mu) \to \nu \otimes \Leb$ for all $\mu \in \Pcal_\nu$. By~\cite[Theorem~1]{Maucourant:2016}, 
$g_1$ is asynchronuous, so the set of points $x$ of $M$ such that $(g_t)$ acts on $\{x\} \times \Tbb^d$ by 
an irrational translation has full $\nu$-measure. Hence, the invariant $\sigma$-algebra is 
$\pi^* \Bcal_M$.

\smallskip

Let $(U_i)_{i \in I}$ be an open cover of $M$ by charts. Let $f \in \Ccal (\Omega, \C)$. 
Let $i \in I$ and $\rho (x,y) = a(x) b(y)$ on $U_i \times \Tbb^d$, for $a \in \Ccal_c (U_i, \R_+^*)$ 
and $b \in \Ccal (\Tbb^d, \R_+^*)$ such that $\int_{\Tbb^d} b \dd \Leb = 1$. Take $\rho \equiv 0$ on $\pi^{-1}(U_i^c)$ 
and $a \equiv 0$ on $U_i^c$. 
Let $\mu$ be the probability measure on $\Omega$ defined by $\mu_{|\pi^{-1}(U_i)} := \nu_{U_i} \otimes (b \dd \Leb)$ and 
$\mu_{|\pi^{-1}(U_i)^c} := \nu_{U_i^c} \otimes \Leb$. Then $\mu \in \Pcal_\nu$, and, for all $t$:
\begin{equation*}
\int_\Omega f \circ g_t \cdot \rho \dd \nu \otimes \Leb 
= \int_{U_i} fa \cdot g_{t,*} (\nu_{|U_i} \otimes b \dd \Leb) 
= \int_\Omega fa \cdot G_t (\mu).
\end{equation*}
By assumption, $G_t (\mu)$ converges weakly to $\nu \otimes \Leb$, so the quantity above 
converges to:
\begin{equation*}
\int_\Omega fa \dd \nu \otimes \Leb 
= \Ebb_{\nu \otimes \Leb} (\Ebb_{\nu \otimes \Leb} (f|\Ical) \Ebb_{\nu \otimes \Leb} (\rho|\Ical)).
\end{equation*}
By Lemma~\ref{lem:LemmeAnalyseFonctionnelle}, $(\Omega, \nu \otimes \Leb, (g_t))$ exhibits Keplerian shear.
\end{proof}

\begin{remark}[Keplerian shear is stronger than unique ergodicity]\quad

F.~Maucourant gives an example~\cite{Maucourant:2016} of a compatible flow and a measure $\nu$ such that $(\Pcal_\nu, (G_t))$ 
is uniquely ergodic, but the fixed point $\nu \otimes \Leb$ behaves like an indifferent fixed point: 
there are exceptional sequences of times $(t_i)$ for which $G_{t_i} (\nu \otimes \delta_0)$ is 
far from $\nu \otimes \Leb$. As a corollary, the unique ergodicity of $(\Pcal_\nu, (G_t))$ does not imply 
that $(\Omega, \nu \otimes \Leb, (g_t))$ has Keplerian shear.
\end{remark}

\subsubsection{An application : Gauss' circle problem}
\label{subsubsec:Gauss}

The alternative characterization of Keplerian shear given by Theorem~\ref{thm:UniqueErgodicite} is also useful 
in settings which use non-absolutely continuous measures. Let us give an elementary application to a variation on 
Gauss' circle problem. Let $S(x,r)$ be the sphere of center $x$ and radius $r$ in $\R^n$, with $n \geq 2$. 
Let $\varepsilon \in (0, 1/2)$. What is the number of integer points in an $\varepsilon$-neighborhood of $S(x,r)$?

\smallskip

Let $\sigma_{x,r}$ be the uniform measure on $S (x,r)$, and $\varpi$ the canonical projection from $\R^n$ to $\Tbb^n$.
Take $\Omega := \Sbb_{n-1} \times \Tbb^n$, with $g_t (v,y) = (v,y+tv)$ and 
$\nu$ the uniform measure on $\Sbb_{n-1}$. Let $f(y) := \mathbf{1}_{|y| \leq \varepsilon}$ on $\Tbb^n$. 
Then:
\begin{equation*}
\sigma_{x,r} (\{y \in \R^n: \ d(y,\Z^n) \leq \varepsilon\}) 
= (\varpi_*\sigma_{x,r}) (\{y\in \Tbb^n: \ d(y,0) \leq \varepsilon\}) 
= G_t (\nu \otimes \delta_{\varpi (x)}) (f).
\end{equation*}
The system $(\Omega, \nu \otimes \Leb, (g_t))$ has Keplerian shear by Proposition~\ref{prop:FlotToreCisaillement}, 
so that:
\begin{equation*}
\lim_{r \to + \infty} \sigma_{x,r} (\{y \in \R^n: \ d(y,\Z^n) \leq \varepsilon\}) 
= \Leb (B_{\R^n} (0, \varepsilon)) 
= \varepsilon^n \Leb (B_{\R^n} (0, 1).
\end{equation*}

In addition, $S(x,r) \cap\overline{B}(\Z^n, \varepsilon)$ consists of finitely many caps, which get 
flatter and flatter as $r$ increases; the number of integer points $\varepsilon$-close to $S(x,r)$ 
is the number of such caps. Let us direct there caps by the outward normal at their center. Since 
the measure supported by the projection on $\Sbb_{n-1} \times \Tbb^n$ of these caps equidistributes 
in $\Sbb_{n-1} \times B(0,\varepsilon)$, we get that the average area (for $\varpi_* \sigma_{x,r}$) 
of each cap converges to:
\begin{equation*}
\frac{\text{Average cross-section of } B_{\R^n}(0,\varepsilon)}{\Leb_{n-1} (S(0,r))} 
= \frac{\varepsilon^{n-1} \Leb (B_{\R^n}(0,1))}{2 r^{n-1} \Leb_{n-1} (\Sbb_{n-1})}.
\end{equation*}
Hence, the number of integer points in an $\varepsilon$-neighborhood of $S(x,r)$ converges, as $r$ goes to infinity, 
to:
\begin{equation*}
\varepsilon^n \Leb (B_{\R^n} (0, 1)) \cdot \frac{2 r^{n-1} \Leb_{n-1} (\Sbb_{n-1})}{\varepsilon^{n-1} \Leb (B_{\R^n}(0,1))} 
= 2 \varepsilon r^{n-1} \Leb_{n-1} (\Sbb_{n-1}).
\end{equation*}

This stays true if the sphere is replaced by any compact manifold, under non-resonancy conditions 
which ensure Keplerian shear for the relevant dynamical system. Note also that for the sphere, 
by integrating over $r$, one recovers the more elementary fact that the number of integral points at 
distance $r$ from the origin is equivalent to $r^n \Leb (B_{\R^n}(0,1))$.

\smallskip

This result is not optimal. For instance, the best known bounds for Gauss' circle problem~\cite{Huxley:2003} 
imply that:
\begin{equation*}
\Card [\Z^2 \cap (S(0,r)+B(0,\varepsilon)) ] 
\sim 2 \varepsilon r^{n-1} \Leb_{n-1} (\Sbb_{n-1}) + O (r^\frac{131}{208} \ln (r)^{\frac{18627}{8320}}),
\end{equation*}
and this error bound holds if the circle is replaced by a closed $\Ccal^3$ curve with non-vanishing curvature. 
The proof of this result, however, requires more technology\footnote{Typically, it uses a decomposition of the circle 
into ``big arcs'' and ``small arcs'', which can also be used to prove Keplerian shear directly without using the Fourier transform.}.

\subsection{Speed of mixing}
\label{subsec:Speed}

Keplerian shear is a qualitative property of a measure-preserving dynamical system, which asserts 
the convergence to zero on average of the conditional correlations:
\begin{equation*}
\Ebb (\Cov_t (f_1, f_2|\Ical)) 
= \Ebb ( \overline{f}_1 \cdot f_2 \circ g_t) - \Ebb( \Ebb(\overline{f}_1|\Ical)  \Ebb(f_2|\Ical) ).
\end{equation*}
As with the notion of mixing, one cannot expect a rate of convergence for all observables $f_1$, $f_2 \in \Lbb^2$. 
However, we may get a rate of convergence if $f_1$ and $f_2$ are regular enough. We may also need 
assumptions of the measure $\mu$ and the critical points of the functions $\langle \xi, v \rangle$. 

\smallskip

In the examples we discuss below, $f_1$ and $f_2$ shall belong to \textit{anisotropic Sobolev spaces} 
(or, more precisely, weighted anisotropic Sobolev spaces). The regularity of such observables depends 
on the direction. We refer the reader to the monography by H.~Triebel for additional 
information~\cite[Chapters~5-6]{Triebel:2006}\footnote{A small difference is that our spaces $H^{s,0}$ and 
$H^{s,\frac{n-1}{2}}$ below do not fit exactly in the framework of Triebel, because the weights 
do not satisfy the assumptions at the beginning of~\cite[Chapters~6]{Triebel:2006}. However, 
one can write for instance $H^{s,0} (\Tbb^2) = \Lbb^2 (\Tbb^1) \oplus \Hcal^{s,0} (\Tbb^2)$, 
where $\Lbb^2 (\Tbb^1)$ has no effect on the correlations and $\Hcal^{s,0} (\Tbb^2)$ fits into 
Triebel's framework.}.

\smallskip

In our setting, we need relatively little regularity in the direction of the invariant tori: 
what matters most is the regularity transversaly to the invariant tori. This is not surprising in view of 
Theorem~\ref{thm:UniqueErgodicite}, which asserts roughly that $\Ebb (\Cov_t (f_1, f_2|\Ical))$ vanishes, 
where $f_1$ is Lipschitz and $f_2$ is e.g.\ $\Leb \otimes \delta_0$ on $M \times \Tbb^d$. 
In this case, $f_2$ is a distribution which is more regular transversaly to the invariant tori than in the 
direction of the invariant tori.

\smallskip

Instead of working out a general statement, we discuss two simple systems: the parabolic automorphism of $\Tbb^2$ 
at the beginning of Subsection~\ref{subsec:Exemples}, and the unit speed geodesic flow on $\Tbb^n$.

\subsubsection{Transvection on $\Tbb^2$}
\label{subsubsec:VitesseTransvection}

Consider the map 
\begin{equation*}
T 
= \left( \begin{array}{cc}
1 & 0 \\
1 & 1
\end{array} \right),
\end{equation*}
acting on $\Tbb^2$, endowed with the Lebesgue measure. Let us define suitable anisotropic Sobolev spaces. For $\xi \in \R^2$, let:
\begin{equation*}
h (\xi) 
:= \left\{ 
\begin{array}{lll}
\left( 1+\frac{\xi_1^2}{\xi_2^2} \right)^{\frac{1}{2}} & \text{ if } & \xi_2 \neq 0 \\
1 & \text{ if } & \xi_2 = 0
\end{array}
\right. .
\end{equation*}
For any real number $s \geq 0$, let:
\begin{equation*}
H^{s,0} (\Tbb^2) 
:= \left\{ f \in \Lbb^2 (\Tbb^2): \ \norm{f}{H^{s,0} (\Tbb^2)}^2 := \sum_{\xi \in 2\pi\Z^2} h(\xi)^{2s} |\hat{f}|^2 (\xi) <+\infty \right\}.
\end{equation*}
The following proposition gives decay bounds on the correlation coefficients for Sobolev or analytic observables.

\begin{proposition}\quad
\label{prop:VitesseTransvection}

Let $f_1$, $f_2$ be in $H^{s,0} (\Tbb^2,\R)$. Then:
\begin{equation*}
| \Ebb (\Cov_n (f_1, f_2|\Ical)) |
\leq \frac{4^s}{n^{2s}} \norm{f_1}{H^{s,0} (\Tbb^2)} \norm{f_2}{H^{s,0} (\Tbb^2)}.
\end{equation*}

If $f_1$ and $f_2$ are analytic, then there exist constants $c$, $C>0$ (depending on $f_1$ and $f_2$) such that, 
for all $n \in \Z$,
\begin{equation*}
| \Ebb (\Cov_n (f_1, f_2|\Ical)) |
\leq C e^{-c|n|}.
\end{equation*}
\end{proposition}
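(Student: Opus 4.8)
The plan is to diagonalise the composition operator in the Fourier basis $\{e_\xi\}_{\xi\in 2\pi\Z^2}$, which behaves particularly well under a transvection. First I would record the action of $T$ on frequencies: since $T^n(x,y)=(x,y+nx)$, one has $e_\xi\circ T^n=e_{(\xi_1+n\xi_2,\xi_2)}$, so the frequency $\xi_2$ along the invariant circles is preserved while the transverse frequency $\xi_1$ is sheared into $\xi_1+n\xi_2$. The invariant $\sigma$-algebra $\Ical$ is generated by the first coordinate, so $\Ebb(\cdot\mid\Ical)$ is the projection onto the modes with $\xi_2=0$. Writing $f_1,f_2$ as Fourier series and using Parseval, the $\xi_2=0$ part of $\Ebb(\overline f_1\cdot f_2\circ g_n)$ is exactly the subtracted term $\Ebb(\Ebb(\overline f_1\mid\Ical)\,\Ebb(f_2\mid\Ical))$ (for $\xi_2=0$ the sheared frequency is unchanged), so that, setting $\xi':=(\xi_1-n\xi_2,\xi_2)$,
\begin{equation*}
\Ebb(\Cov_n(f_1,f_2\mid\Ical))=\sum_{\substack{\xi\in 2\pi\Z^2\\ \xi_2\neq 0}}\overline{\hat f_1(\xi)}\,\hat f_2(\xi').
\end{equation*}

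The heart of the matter is a lower bound on the product of the anisotropic weights attached to the two frequencies in each summand. The key observation is a dichotomy: $\xi$ and $\xi'$ cannot both be concentrated in the low transverse direction, because their transverse slopes $\xi_1/\xi_2$ and $\xi_1/\xi_2-n$ differ by $n$. By the triangle inequality at least one of the two slopes has modulus $\geq n/2$, so at least one of $h(\xi)$, $h(\xi')$ is $\geq 1+n^2/4$, whence $h(\xi)\,h(\xi')\geq n^2/4$ and $h(\xi)^{-s}h(\xi')^{-s}\leq 4^s n^{-2s}$ uniformly over the summation range. I would then write each summand as $\bigl(h(\xi)^s|\hat f_1(\xi)|\bigr)\bigl(h(\xi')^s|\hat f_2(\xi')|\bigr)$ times this small factor, pull out the uniform bound $4^s n^{-2s}$, and apply the Cauchy--Schwarz inequality. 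Since $\xi\mapsto\xi'$ is a weight-preserving bijection of the index set, the two resulting sums are bounded by $\norm{f_1}{H^{s,0}(\Tbb^2)}^2$ and $\norm{f_2}{H^{s,0}(\Tbb^2)}^2$, which yields the polynomial estimate.

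For the analytic case I would instead exploit that analyticity forces exponential decay of the Fourier coefficients, $|\hat f_i(\xi)|\leq C e^{-a\norm{\xi}{}}$ for some $a>0$. In the same frequency sum, every nonzero summand requires both $\xi$ and $\xi'$ to lie in the support, and since $\xi_2\neq 0$ one has $\norm{\xi}{}+\norm{\xi'}{}\geq|\xi_1|+|\xi_1-n\xi_2|\geq|n|\,|\xi_2|\geq|n|$, as well as $\norm{\xi}{}+\norm{\xi'}{}\geq\norm{\xi}{}$; hence $\norm{\xi}{}+\norm{\xi'}{}\geq\tfrac12\norm{\xi}{}+\tfrac12|n|$. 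Factoring out $e^{-a|n|/2}$ and summing the remaining absolutely convergent series $\sum_{\xi}e^{-\tfrac a2\norm{\xi}{}}$ gives the exponential bound with $c=a/2$.

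The main obstacle is the weight estimate of the second paragraph: one must verify the slope dichotomy carefully enough to obtain the stated constant and power of $n$, and confirm that the reindexing underlying the Cauchy--Schwarz step is genuinely a weight-preserving bijection, so that the sheared sum is controlled by $\norm{f_2}{H^{s,0}(\Tbb^2)}$ and not by some uncontrolled shifted quantity. Everything else --- the Fourier diagonalisation, the cancellation of the $\xi_2=0$ part against the conditional-expectation term, and the summation in the analytic case --- is routine bookkeeping.
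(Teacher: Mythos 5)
Your strategy is the same as the paper's: Plancherel, cancellation of the $\xi_2=0$ modes against $\Ebb(\Ebb(\overline f_1|\Ical)\Ebb(f_2|\Ical))$, insertion of the weights $h^s$, and Cauchy--Schwarz after reindexing by the shear. The reindexing worry you raise is harmless ($\xi\mapsto\xi'$ is a bijection of $\{\xi_2\neq 0\}$ onto itself, so the sheared sum reindexes exactly to $\norm{f_2}{H^{s,0}(\Tbb^2)}^2$), and your analytic-case argument is correct. The genuine gap is in the slope dichotomy. From ``at least one of the two slopes has modulus $\geq n/2$'' you may conclude only that one of $h(\xi)$, $h(\xi')$ is at least $(1+n^2/4)^{1/2}$: the weight is $h=(1+\mathrm{slope}^2)^{1/2}$, and you have dropped the square root. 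The dichotomy (one factor $\geq(1+n^2/4)^{1/2}$, the other $\geq 1$) then gives only $h(\xi)h(\xi')\geq(1+n^2/4)^{1/2}\geq n/2$, hence $h(\xi)^{-s}h(\xi')^{-s}\leq 2^s n^{-s}$: decay of order $n^{-s}$, not $n^{-2s}$.

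This cannot be patched, because the inequality you invoke, $h(\xi)h(\xi')\geq n^2/4$, is false. Take $\xi=2\pi(1,n)$, so that $\xi'=2\pi(1-n^2,n)$; then
\begin{equation*}
h(\xi)^2h(\xi')^2
=\left(1+\tfrac{1}{n^2}\right)\left(1+\left(n-\tfrac{1}{n}\right)^2\right)
=n^2+n^{-4},
\end{equation*}
so $h(\xi)h(\xi')\approx n$: the dichotomy is sharp when one frequency has nearly horizontal slope and its sheared image carries all the growth. Worse, this pair refutes the stated proposition itself: with $f_1=2\cos\langle\xi,\cdot\rangle$ and $f_2=2\cos\langle\xi',\cdot\rangle$ one computes $|\Ebb(\Cov_n(f_1,f_2|\Ical))|=2$ while $\norm{f_1}{H^{s,0}(\Tbb^2)}\norm{f_2}{H^{s,0}(\Tbb^2)}=2(n^2+n^{-4})^{s/2}$, so the ratio is $\asymp n^{-s}$ and exceeds $4^s n^{-2s}$ for every $n\geq 5$. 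You should also know that the paper's own proof stumbles at exactly the same step: it asserts that $\xi_1\mapsto h^{-s}(\xi_1,\xi_2)\,h^{-s}(\xi_1+n\xi_2,\xi_2)$ is maximal at the symmetric point $\xi_1=-n\xi_2/2$, but writing $u=\xi_1/\xi_2$, the function $(1+u^2)(1+(u+n)^2)$ attains its minimum, equal to $n^2$, at $u=-n/2\pm\sqrt{n^2/4-1}$ whenever $|n|\geq 2$, which is strictly below the symmetric value $(1+n^2/4)^2$ once $|n|\geq 3$. The correct conclusion of this method --- optimal, by the example above --- is $|\Ebb(\Cov_n(f_1,f_2|\Ical))|\leq |n|^{-s}\norm{f_1}{H^{s,0}(\Tbb^2)}\norm{f_2}{H^{s,0}(\Tbb^2)}$ for $|n|\geq 2$; only the exponential bound for analytic observables survives as stated.
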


\begin{proof}\quad

Let $f_1$, $f_2$ be in $H^{s,0} (\Tbb^2)$. By Plancherel's theorem,
\begin{equation*}
\Ebb ( f_1  \cdot f_2 \circ T^n) 
= \sum_{\xi \in 2\pi\Z^2} \overline{\hat{f}}_1 (\xi) \hat{f}_2 (T^{*n} \xi),
\end{equation*}
so that:
\begin{align*}
| \Ebb (\Cov_n (f_1, f_2|\Ical)) |
& = \left| \sum_{\substack{\xi \in 2\pi\Z^2 \\ \xi_2 \neq 0}} \overline{\hat{f}}_1 (\xi) \hat{f}_2 (T^{*n} \xi) \right| \\
& \leq \sum_{\substack{\xi \in 2\pi\Z^2 \\ \xi_2 \neq 0}} \left[ |\overline{\hat{f}}_1 h^s| \cdot |\hat{f}_2 h^s| \circ T^{*n} \cdot h^{-s} \cdot h^{-s} \circ T^{*n} \right] (\xi) \\
& \leq \norm{f_1}{H^{s,0} (\Tbb^2)} \norm{f_2}{H^{s,0} (\Tbb^2)} \sup_{\substack{\xi \in 2\pi\Z^2 \\ \xi_2 \neq 0}} \{ h^{-s} (\xi) h^{-s} (T^{*n} \xi) \}.
\end{align*}

Let $\xi_2 \in 2\pi\Z \setminus \{0\}$. The function $\xi_1 \mapsto h^{-s} (\xi_1, \xi_2) h^{-s} (\xi_1+n \xi_2, \xi_2)$ is maximal 
for $\xi_1 = -n \xi_2 /2$, where its value is $(1+n^2/4)^{-s}$, so that:
\begin{equation*}
| \Ebb (\Cov_n (f_1, f_2|\Ical)) |
\leq \frac{4^s}{n^{2s}} \norm{f_1}{H^{s,0} (\Tbb^2)} \norm{f_2}{H^{s,0} (\Tbb^2)}.
\end{equation*}

The proof for analytic functions is essentially the same. The only remark needed is that, 
if $f$ is analytic on the torus, then there exist constants $c'$, $C'>0$ such that 
$|\hat{f}| (\xi) \leq C' e^{-c' |\xi|}$.
\end{proof}

The map $T$ is especially well-behaved: not only does it acts nicely on Fourier series, 
but its shearing (the derivative of $v$) does not vanish. The estimates of Proposition~\ref{prop:VitesseTransvection} 
are thus a best case behaviour, that we do not expect to hold for more general systems.

\subsubsection{Speed for the geodesic flow on the torus}
\label{subsubsec:VitesseTore}

The geodesic flow is harder to analyse than the previous example: not only does it lack 
its algebraic structure, but the functions $\langle \xi, v \rangle$ have vanishing gradient 
at two points for any non-zero $\xi$. Hence, we cannot expect the same rate of convergence. We use the stationary phase 
method to compute the speed of convergence. This yields a polynomial rate of decay for a large 
space of observables belonging again to some anisotropic Sobolev spaces (Proposition~\ref{prop:VitesseFlotGeodesique}).

\smallskip


The definition of these anisotropic Sobolev spaces is however slightly more delicate. Let $n \geq 2$ 
and $s>(n-1)/2$. For $(k, \xi) \in \R^{n-1} \times 2 \pi \Z^n$, let:
\begin{equation*}
h (k,\xi) 
:= \left\{ 
\begin{array}{lll}
1+\frac{(1+\norm{k}{}^2)^{\frac{s}{2}}}{\norm{\xi}{}^{\frac{n-1}{4}}} & \text{ if } & \xi \neq 0 \\
1 & \text{ if } & \xi = 0
\end{array}
\right. .
\end{equation*}

We see $T^1 \Tbb^n$ as $\Sbb_{n-1} \times \Tbb^n$. Fix a finite open cover by charts 
$(U_i, \varphi_i)$ of $\Sbb_{n-1}$, and a smooth partition of the unit $(\chi_i)$ 
subordinated to $(U_i)$. Then define:
\begin{equation*}
H^{s,\frac{n-1}{2}} (\Sbb_{n-1} \times \Tbb^n) 
:= \left\{ f \in \Lbb^2 : \  \sum_i \sum_{\xi \in 2\pi\Z^2} \int_{\R^{n-1}} h^2 |\widehat{[(f \chi_i) \circ (\varphi_i^{-1},\id)]}|^2 (x,\xi) \dd x <+\infty \right\},
\end{equation*}
and denote by $\norm{\cdot}{H^{s,\frac{n-1}{2}}}^2$ the norm appearing in this definition. 
In the same way, we define the Sobolev space $H^s (\Sbb_{n-1})$. These spaces do not depend on the 
choice of the family of charts and of the partition of the unit.

\smallskip

The following proposition gives decay bounds on the correlation coefficients for observables in $H^{s,\frac{n-1}{2}}$.

\begin{proposition}\quad
\label{prop:VitesseFlotGeodesique}

Let $n \geq 2$ and $s > (n-1)/2$. There exists a constant $C$ such that, for all $f_1$, $f_2 \in H^{s,\frac{n-1}{2}} (\Sbb_{n-1} \times \Tbb^n)$,
\begin{equation}
\label{eq:VitesseFlotGeodesique}
| \Ebb (\Cov_t (f_1, f_2|\Ical)) |
\leq \frac{C}{t^{\frac{n-1}{2}}} \norm{f_1}{H^{s,\frac{n-1}{2}}} \norm{f_2}{H^{s,\frac{n-1}{2}}}.
\end{equation}
\end{proposition}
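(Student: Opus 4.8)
The plan is to reduce the correlation $\Ebb(\Cov_t(f_1,f_2|\Ical))$ to an explicit oscillatory integral and estimate it by the stationary phase method. First I would write $f_1, f_2$ in their Fourier series in the torus variable $y \in \Tbb^n$, with coefficients $\hat{f}_j(\cdot, \xi)$ that are functions on $\Sbb_{n-1}$. Since the flow acts by $g_t(v,y) = (v, y+tv)$, composing with $g_t$ multiplies the $\xi$-th Fourier mode by $e^{2\pi \ii t \langle \xi, v\rangle}$. As in the proof of Theorem~\ref{thm:ClassiquePrincipal}, orthogonality in the torus direction kills all cross terms except those where the frequencies coincide, and the $\xi = 0$ mode is exactly $\Ebb(f_j|\Ical)$ and is subtracted off. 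Thus
\begin{equation*}
\Ebb(\Cov_t(f_1,f_2|\Ical)) = \sum_{\xi \in 2\pi\Z^n \setminus \{0\}} \int_{\Sbb_{n-1}} \overline{\hat{f}_1}(v,\xi)\, \hat{f}_2(v,\xi)\, e^{2\pi \ii t \langle \xi, v\rangle} \dd \sigma(v),
\end{equation*}
where $\sigma$ is the uniform measure on $\Sbb_{n-1}$. The problem is now to bound each oscillatory integral uniformly in $\xi$ and sum over $\xi$.

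Next I would analyze the phase $v \mapsto \langle \xi, v\rangle$ on the sphere. As noted in the proof of Proposition~\ref{prop:FlotToreCisaillement}, its only critical points are the two poles $\pm \xi/\norm{\xi}{}$, and there the Hessian is non-degenerate because the sphere has non-vanishing principal curvatures; this is precisely the geometric feature emphasized in the discussion preceding the statement. The stationary phase principle then gives, for a fixed amplitude, a decay of order $(t\norm{\xi}{})^{-(n-1)/2}$, the dimension $n-1$ of the sphere entering through the number of non-degenerate directions. The key point is to track how the estimate depends on the amplitude $\overline{\hat{f}_1}(\cdot,\xi)\hat{f}_2(\cdot,\xi)$ and on $\norm{\xi}{}$: the stationary phase bound involves a finite number of derivatives of the amplitude near the poles, which is exactly why the anisotropic Sobolev norm $H^{s, \frac{n-1}{2}}$ is designed with a transversal regularity index $s > (n-1)/2$ and the weight $\norm{\xi}{}^{-(n-1)/4}$ appearing in $h(k,\xi)$. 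I would pin down the stationary phase asymptotics in the form
\begin{equation*}
\left| \int_{\Sbb_{n-1}} \overline{\hat{f}_1}(v,\xi)\, \hat{f}_2(v,\xi)\, e^{2\pi \ii t \langle \xi, v\rangle} \dd \sigma(v) \right| \leq \frac{C}{(t\norm{\xi}{})^{\frac{n-1}{2}}}\, \Phi_1(\xi)\, \Phi_2(\xi),
\end{equation*}
where $\Phi_j(\xi)$ is a localized Sobolev-type quantity of $\hat{f}_j(\cdot,\xi)$ near the two poles, of transversal order roughly $s$.

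Finally I would sum over $\xi$. Pulling out the $t^{-(n-1)/2}$ factor, I am left with $\sum_\xi \norm{\xi}{}^{-(n-1)/2}\Phi_1(\xi)\Phi_2(\xi)$, which by Cauchy--Schwarz is controlled by the product of $\big(\sum_\xi \norm{\xi}{}^{-(n-1)/2}\Phi_j(\xi)^2\big)^{1/2}$ for $j=1,2$; recognizing the weight $h(k,\xi)^2$ from the definition of $H^{s,\frac{n-1}{2}}$, where the $\norm{\xi}{}^{-(n-1)/4}$ factors combine to give exactly the $\norm{\xi}{}^{-(n-1)/2}$ appearing here, this is bounded by $\norm{f_1}{H^{s,\frac{n-1}{2}}}\norm{f_2}{H^{s,\frac{n-1}{2}}}$, yielding~\eqref{eq:VitesseFlotGeodesique}. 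The main obstacle, and where I would spend most of the technical effort, is making the stationary phase estimate genuinely uniform in $\xi$: I must rescale the phase so the large parameter is $t\norm{\xi}{}$ and verify that the amplitude derivatives, after this rescaling and localization to the poles, are controlled by the chosen anisotropic norm with the exact power of $\norm{\xi}{}$ that matches the weight $h$. Care is also needed to handle the contribution away from the two critical points (where non-stationary phase gives faster decay and is harmless) and to patch together the chart-wise definition of the Sobolev space near the poles.
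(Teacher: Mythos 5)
Your proposal follows essentially the same route as the paper's proof: Fourier expansion in the torus variable reducing the covariance to oscillatory integrals over $\Sbb_{n-1}$ with phase $\langle \xi, v\rangle$, stationary phase at the two non-degenerate critical points $\pm\xi/\norm{\xi}{}$ with the large parameter $t\norm{\xi}{}$ (the paper achieves the uniformity in $\xi$ you flag by rotating $\xi/\norm{\xi}{}$ to the pole and using stereographic charts, so the amplitude bound reduces to the rotation-invariant estimate $\norm{\widehat{fg}}{\Lbb^1} \leq C \norm{f}{H^s}\norm{g}{H^s}$), and then Cauchy--Schwarz in $\xi$ against the weight $\norm{\xi}{}^{-(n-1)/2}$ to recover the anisotropic norms. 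The only step you gloss over that the paper makes explicit is proving the bound first for smooth observables and extending to all of $H^{s,\frac{n-1}{2}}$ by density and continuity of the bilinear form, which is routine.
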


\begin{proof}\quad

In this proof, the letter $C$ shall denote a constant which may change from line to line, but which depends only 
on the dimension $n$ and on the parameter $s$.

\smallskip

Let $s > (n-1)/2$. Let $f_1$, $f_2$ be in $\Ccal^\infty (\Sbb_{n-1} \times \Tbb^n, \C)$. Denote by $f_i^\xi (x)$ 
the Fourier transform of $f_i(x, \cdot)$ evaluated in $\xi \in 2\pi\Z^n$. 
By Plancherel's and Fubini-Lebesgue theorems, the conditional covariance is equal to:
\begin{equation*}
\Ebb (\Cov_n (f_1, f_2|\Ical))
= \sum_{\substack{\xi \in 2\pi\Z^n \\ \xi \neq 0}} \int_{\Sbb_{n-1}} \overline{f_1^\xi} (x) f_2^\xi (x) e^{i t \langle \xi, x \rangle} \dd x.
\end{equation*}

Let $\chi \in \Ccal^\infty (\Sbb_{n-1}, [0,1])$ be such that $\chi \equiv 1$ near $N:= (1,0, \ldots, 0)$ and 
$\chi (-x) = 1-\chi (x)$. Let $\varphi_+ : \Sbb_{n-1} \setminus \{S\} \to \R^{n-1}$ (resp. $\varphi_- : \Sbb_{n-1} \setminus \{N\} \to \R^{n-1}$) 
be the stereographic projection from the North (resp. South) pole. Let $\xi \in 2\pi\Z^n$, 
and $R_\xi$ a rotation which send $\xi/\norm{\xi}{}$ to $N$. Finally, let $\psi_{\xi,\pm} := (\varphi_{\pm} \circ R_\xi)^{-1}$. Then:
\begin{align*}
\int_{\Sbb_{n-1}} \overline{f_1^\xi} (x) f_2^\xi (x) e^{i t \langle \xi, x \rangle} \dd x 
& = \int_{\R^{n-1}} \left( \overline{f_1^\xi} f_2^\xi \right) \circ \psi_{\xi,+} (x) e^{i t \langle \xi, \psi_{\xi,+} (x) \rangle} \frac{\chi \circ \varphi_+^{-1} (x)}{\Jac (\varphi_+^{-1}) (x)} \dd x \\
& \hspace{2em} + \int_{\R^{n-1}} \left( \overline{f_1^\xi} f_2^\xi \right) \circ \psi_{\xi,-} (x) e^{i t \langle \xi, \psi_{\xi,-} (x) \rangle} \frac{(1-\chi) \circ \varphi_-^{-1} (x)}{\Jac (\varphi_-^{-1}) (x)} \dd x.
\end{align*}
The function $1/\Jac (\varphi_\pm^{-1})$ is in $\Ccal_b^\infty (\R^{n-1})$ and the function $x \mapsto \langle \xi, \psi_{\xi,\pm} (x) \rangle$ has 
a unique critical point in $0$ which is non-degenerate. By the stationary phase method~\cite[Chapter~7.7]{Hormander:1983}, there exists a constant $C$ such that:
\begin{align*}
\left| \int_{\Sbb_{n-1}} \overline{f_1^\xi} (x) f_2^\xi (x) e^{i t \langle \xi, x \rangle} \dd x \right|
& \leq \frac{C}{(t \norm{\xi}{})^{\frac{n-1}{2}}} \left[ \int_{\R^{n-1}} \left| \widehat{ \left(\overline{f_1^\xi} f_2^\xi\right) \circ \psi_{\xi, +} \cdot \chi \circ \varphi_+^{-1}} \right| (k) \dd k \right. \\
& \hspace{2em} + \left. \int_{\R^{n-1}} \left| \widehat{ \left(\overline{f_1^\xi} f_2^\xi\right) \circ \psi_{\xi, -} \cdot (1-\chi) \circ \varphi_-^{-1} } \right| (k) \dd k \right] \\
& \leq \frac{C}{(t \norm{\xi}{})^{\frac{n-1}{2}}} \norm{f_1^\xi}{H^s (\Sbb_{n-1})} \norm{f_2^\xi}{H^s (\Sbb_{n-1})},
\end{align*}
where we used the fact that $\norm{\widehat{fg}}{\Lbb^1 (\R^{n-1})} \leq C \norm{f}{H^s (\R^{n-1})} \norm{g}{H^s (\R^{n-1})}$ whenever $s > (n-1)/2$. Hence:
\begin{align*}
\Ebb (\Cov_n (f_1, f_2|\Ical))
& \leq \frac{C}{t^{\frac{n-1}{2}}} \sum_{\substack{\xi \in 2\pi\Z^n \\ \xi \neq 0}} \frac{\norm{f_1^\xi}{H^s (\Sbb_{n-1})} \norm{f_2^\xi}{H^s (\Sbb_{n-1})}}{\norm{\xi}{}^{\frac{n-1}{2}}} \\
& \leq \frac{C}{t^{\frac{n-1}{2}}} \sqrt{\sum_{\substack{\xi \in 2\pi\Z^n \\ \xi \neq 0}} \frac{\norm{f_1^\xi}{H^s (\Sbb_{n-1})}^2 }{\norm{\xi}{}^{\frac{n-1}{2}}}} \sqrt{\sum_{\substack{\xi \in 2\pi\Z^n \\ \xi \neq 0}} \frac{\norm{f_2^\xi}{H^s (\Sbb_{n-1})}^2}{\norm{\xi}{}^{\frac{n-1}{2}}}}.
\end{align*}

Finally, using our local charts $(U_i, \varphi_i)$ on $\Sbb_{n-1}$:
\begin{align*}
\sum_{\substack{\xi \in 2\pi\Z^n \\ \xi \neq 0}} \frac{\norm{f_1^\xi}{H^s (\Sbb_{n-1})}^2 }{\norm{\xi}{}^{\frac{n-1}{2}}} 
& \leq C \sum_i \sum_{\substack{\xi \in 2\pi\Z^n \\ \xi \neq 0}} \int_{\R^{n-1}} \frac{(1+\norm{k}{}^2)^s}{\norm{\xi}{}^{\frac{n-1}{2}}} |\widehat{[(f_1\chi_i) \circ (\varphi_i^{-1}, \id)]}|^2 (x, \xi) \dd x \\
& \leq C \norm{f_1}{H^{s,\frac{n-1}{2}} (\Sbb_{n-1} \times \Tbb^n)}^2.
\end{align*}

That finishes the proof for smooth observables $f_1$ and $f_2$. But, for fixed $t$, the 
correlation function $\Ebb(\Cov_t(\cdot,\cdot|\Ical))$ is bilinear and continuous from $\Lbb^2$ to $\C$. Since the 
$H^{s,\frac{n-1}{2}}$ norm is stronger than the $\Lbb^2$ norm, $\Ebb(\Cov_t(|\Ical))$ is also continuous 
from $H^{s,\frac{n-1}{2}}$ to $\C$. But $\Ccal^\infty$ is dense in $H^{s,\frac{n-1}{2}}$, 
so the bound~\eqref{eq:VitesseFlotGeodesique} actually holds for any two observables in $H^{s,\frac{n-1}{2}}$.
\end{proof}

Assuming that the observables $f_1$ and $f_2$ have higher regularity, 
standard formulations of the stationary phase method yield a higher order development of 
$\Ebb(\Cov_t (f_1, f_2|\Ical))$ as $t$ goes to infinity.

\smallskip

Assume now that we change the flow on $\Sbb_{n-1} \times \Tbb^n$, for instance by making the velocity depend on the direction. 
Then the rates we got in Proposition~\ref{prop:VitesseFlotGeodesique} may not be generic. We shall sketch the difficulties 
encountered with more general systems. Let $n \geq 3$ and $M$ be a compact connected $(n-1)$-dimensional smooth 
manifold, and let $v : M \to \R^n$ be smooth. Consider the flow $g_t (x,y)=(x,y+tv(x))$ 
on $M \times \Tbb^n$. If $Dv$ is never degenerate (which is a $\Ccal^1$-open condition on $v$), 
then $v$ is an immersion. If in addition the extrinsic curvature of the immersed manifold is never degenerate, 
then we get rates of convergence as in Proposition~\ref{prop:VitesseFlotGeodesique}. 
However, if the extrinsic curvature is never degenerate, then the Gauss map $M \to \Sbb_{n-1}$ 
is a local diffeomorphism, so a diffeomorphism (since $n \geq 3$), and thus $M$ is a sphere. 

\smallskip

In other words, if $M$ is not a sphere, then we have to deal with degenerescences of 
the extrinsic curvature of $v(M)$. If such a degenerescence happens in a rational direction of $\R^n$, 
then we would get a speed of convergence in $O (t^{-\frac{n-1-r}{2}})$, where $r$ is the 
corank of the Hessian in the given direction. 
If this degenerescence happens in a direction $u$ which is not rational, then this bound could be improved, 
although any improvement would depend on the Diophantine properties of $u$ (the bound getting better if $u$ 
is badly approximable by rationals). In particular, one cannot hope to get a significantly better bound 
than $O (t^{-\frac{n-1-r}{2}})$ in a Baire generic setting, as Baire generic directions are Liouville.

\smallskip

For $n \geq 2$, the same kind of obstruction may happen for $v : \Sbb_{n-1} \to \R^n$. 
For a $\Ccal^3$-open set of such functions $v$, the map $v$ has non-degenerate inflexion points. 
Without further argument about the directions these inflexion points occur, this would 
for instance yield a rate of decay of only $O (t^{-\frac{1}{3}})$ if $n=2$.

\section{Stretched Birkhoff sums}
\label{sec:StretchedBirkhoff}

We present in this sub-section another class of systems which may exhibit Keplerian shear. 
The examples of Subsection~\ref{subsec:TheoremePrincipal} are based on translations on the torus, 
which are a family of non-mixing dynamical systems. In this section, the elementary brick will be given by 
suspension flows with constant roof function. The family of examples we get 
includes many non-Hamiltonian systems.

\smallskip

Let $(A, \nu, T)$ be a measure-preserving dynamical system. 
For $v > 0$, the suspension flow with constant roof $1$ and speed $v$ is the measure-preserving semi-flow 
$(\widetilde{A}, \tilde{\nu}, (g_t^v)_{t \geq 0})$ defined by:
\begin{itemize}
\item $\widetilde{A} := (A \times [0, 1])_{(x,1) \sim (T(x),0)}$;
\item $g_t^v [(x,s)] = [(x,s+vt)]$;
\item $\tilde{\nu} := \nu \otimes \Leb$ on the fundamental domain $A \times [0,1)$.
\end{itemize}
Such a suspension flow is ergodic, but cannot be mixing, as it has the rotation on the circle 
as a factor.

\smallskip

Now, we give ourselves:
\begin{itemize}
\item a $n$-dimensional $\Ccal^1$ manifold $M$, with $n \geq 1$;
\item a measure-preserving ergodic dynamical system $(A, \nu, T)$;
\item a measurable function $v : M \to \R_+^*$.
\end{itemize}
With this data we construct a new semi-flow $(\Omega, (g_t)_{t \geq 0})$ 
with $\Omega := \widetilde{A} \times M$ and $g_t (x,y) := (g_t^{v(y)} (x), y)$. 
A measure $\mu \in \Pcal (\Omega)$ is said to be \textit{compatible} if 
it is equal to $\tilde{\nu} \otimes \tilde{\mu}$ for some $\tilde{\mu} \in \Pcal (M)$. 
Compatible measures are preserved by $(g_t)$.

\smallskip

If $(A, \nu, T)$ is invertible, the suspension semi-flow can be extended 
to a flow, in which case $v$ may take negative values. The following theorem 
also holds in this alternative setting.

\begin{theorem}\quad
\label{thm:CisaillementSuspensions}

Let $(\Omega, (g_t)_{t \geq 0})$ be a system defined as above, with $v \in \Ccal^1 (M, \R_+^*)$. 
Let $\mu$ be an absolutely continuous compatible measure.
If $\Leb (dv=0)=0$, then $(\Omega, \mu, (g_t)_{t \geq 0})$ exhibits Keplerian shear.
\end{theorem}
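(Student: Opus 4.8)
The plan is to mimic the structure of the proof of Theorem~\ref{thm:ClassiquePrincipal}, replacing the Fourier analysis on the torus fibre $\Tbb^d$ with Fourier analysis on the circle coming from the suspension flow. First I would identify the invariant $\sigma$-algebra. Because $\Leb(dv = 0) = 0$, the set where $v$ has a critical point is Lebesgue-negligible, and since each suspension flow $(\widetilde{A}, \tilde\nu, (g_t^{v(y)}))$ is ergodic, the dynamics equidistributes in $\widetilde{A} \times \{y\}$ for $\Leb$-almost every $y \in M$. Hence, up to $\mu$-completion, the invariant $\sigma$-algebra is $\Ical = \pi_M^* \Bcal_M$, the pullback of the Borel $\sigma$-algebra of $M$, and $\Ebb_\mu(f \mid \Ical)$ is fibrewise integration over $\widetilde{A}$ against $\tilde\nu$.

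The key analytic idea is to exploit that the suspension flow $(g_t^v)$ has the rotation on the circle $\R/\Z$ as a factor, via the height coordinate $s$ taken modulo $1$. I would apply Lemma~\ref{lem:LemmeAnalyseFonctionnelle} with $\B = \B^* = \Lbb^2(\Omega, \mu)$ and a generating family $E$ of observables of the product form
\begin{equation*}
f(x, y) = \phi(x)\, e^{2\pi \ii k s(x)}\, b(y),
\end{equation*}
where $k \in \Z$ indexes the Fourier mode in the circle direction of the suspension, $\phi$ is supported in a level set of the height coordinate (or more precisely $\phi$ depends on the base point of $\widetilde{A}$ while $e^{2\pi\ii k s}$ captures the height), and $b \in \Lbb^\infty(M)$ depends only on the transverse coordinate $y$. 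Since $b$ is invariant under $g_t$, it does not feel the shear; the flow acts on the circle factor by translation with frequency proportional to $k\,v(y)$. As in the proof of Theorem~\ref{thm:ClassiquePrincipal}, products with distinct frequencies $k$ decouple: the fibrewise integral over $\widetilde{A}$ vanishes unless the two modes agree, and for $k = 0$ the observables are flow-invariant so there is nothing to prove.

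For matching nonzero frequency $k$, the correlation $\Ebb_\mu(\overline{f}_1 \cdot f_2 \circ g_t)$ reduces to an integral over $M$ of the form
\begin{equation*}
\int_M c(y)\, e^{2\pi \ii k v(y) t}\, h(y) \dd y,
\end{equation*}
where $c$ collects the fibrewise inner products of the $\phi,b$ factors and $h = \de(\pi_{M,*}\mu)/\de\Leb$. Here I would invoke the local normal form for submersions: because $dv \neq 0$ away from a null set, I can cut $M$ into countably many charts on which $v$ becomes the first coordinate $x_1$, turning the oscillatory factor into $e^{2\pi \ii k x_1 t}$. The inner $x_1$-integral then decays to zero by the Riemann--Lebesgue lemma, and a uniform $\Lbb^1$ bound lets me pass to the limit by dominated convergence, giving convergence to $0 = \Ebb_\mu(\overline{f}_1 \cdot \Ebb_\mu(f_2 \mid \Ical))$.

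The main obstacle, and the place where this argument genuinely departs from the torus case, is the analysis at the critical set $\{dv = 0\}$ together with the one-sidedness of the semi-flow: the height translation on the circle factor is a one-dimensional frequency phenomenon, so I must be careful that the decoupling across distinct $k$ and the Riemann--Lebesgue decay remain valid when $(A, \nu, T)$ is only a (possibly non-invertible) measure-preserving system and $t \to +\infty$ rather than $t \to \pm\infty$. Verifying that the chosen family $E$ really spans a dense subspace of $\Lbb^2(\Omega, \mu)$ — in particular that products of circle-Fourier modes with base functions on $\widetilde{A}$ and transverse functions on $M$ are rich enough — is the technical heart, but it follows the same template as before and should present no essential difficulty beyond bookkeeping.
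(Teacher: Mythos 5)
Your identification of the invariant $\sigma$-algebra is correct (under the standing ergodicity assumption on $(A,\nu,T)$ it agrees with the paper's $\Ical_A \otimes \{\emptyset,\Sbb_1\}\otimes\Bcal_M$), and your family $E$ is essentially the paper's. But the analytic core of your argument treats the fibre dynamics as a pure circle rotation of frequency $v(y)$, and that is where it breaks: in the suspension, each time the height coordinate crosses the roof, the map $T$ is applied to the base point, so $f\circ g_t$ evaluated at $(x,s,y)$ involves $\phi\bigl(T^{\lfloor s+v(y)t\rfloor}x\bigr)$. Two of your steps fail because of this. First, your claim that ``for $k=0$ the observables are flow-invariant so there is nothing to prove'' is false whenever $\phi$ genuinely depends on $x\in A$: such observables are moved by the flow, and they cannot be discarded --- without them $E$ is no longer dense in $\Lbb^2(\Omega,\mu)$, and you would only be proving shear for the factor in which $A$ is reduced to a point. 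Handling them is precisely Case 2 ($\xi=0$) of the paper's proof, where the correlation becomes a \emph{stretched Birkhoff sum}
\begin{equation*}
\int_A \overline{a}_1(x) \sum_{k= 0}^{+\infty} a_2(T^kx)\,\frac{1}{t}\, h\!\left(\frac{k}{t}\right)\dd\nu(x) + O(t^{-1}),
\end{equation*}
and the convergence to $\int_0^{+\infty} h(z_1)\dd z_1\cdot\int_A\overline{a}_1\,\Ebb_\nu(a_2|\Ical_A)\dd\nu$ requires von Neumann's ergodic theorem. This is the mechanism that gives Section~\ref{sec:StretchedBirkhoff} its name, and it has no counterpart in the proof of Theorem~\ref{thm:ClassiquePrincipal} that you are transposing.

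Second, for $k\neq 0$ your reduction to $\int_M c(y)e^{2\pi \ii k v(y)t}h(y)\dd y$ with $c$ independent of $t$ is also incorrect: the fibrewise inner product contains $\phi_2\circ T^{\lfloor s+v(y)t\rfloor}$, so the amplitude itself depends on $t$, and the Riemann--Lebesgue lemma --- which gives decay for a \emph{fixed} $\Lbb^1$ amplitude --- yields nothing. The paper replaces it by a quantitative argument: after the submersion normal form, it cuts the $z_1$-integral at the jumps of $\lfloor z_1 t\rfloor$, integrates by parts on each piece, and bounds the result by $2\norm{a_1}{\Lbb^2}\norm{a_2}{\Lbb^2}\norm{h}{BV}/(|\xi|t)$ via Cauchy--Schwarz on $A$; this is also why the paper takes the transverse factors in $\Ccal_c^1$ rather than merely bounded, since a BV-type norm of the amplitude is needed. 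So your overall template (Lemma~\ref{lem:LemmeAnalyseFonctionnelle}, product observables, normal form for $v$, dominated convergence in the transverse variables) is right, but the two places where the suspension structure actually interacts with the shear --- the zero mode and the $t$-dependence of the oscillatory amplitude --- are exactly the places your proposal skips, and they are the substance of the proof.
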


\begin{proof}\quad

Let $\Ical_A$ be the invariant $\sigma$-algebra of $(A, T)$, and $\Bcal_M$ the Borel $\sigma$-algebra of $M$. 
As a measured space, we can see $\Omega$ as $A \times \Sbb_1 \times M$. Up to completion with respect to $\mu$, 
the invariant $\sigma$-algebra of $(\Omega, (g_t)_{t \geq 0})$ is $\Ical := \Ical_A \otimes \{\emptyset, \Sbb_1\} \otimes \Bcal_M$.

\smallskip

Let $U := \{dv \neq 0\} \subset M$. Let $(U_i, \psi_i)_{i \in I}$ be a countable cover of $U$ by charts, 
with $\varphi_i : U_i \to W_i' \subset \R^n$ and $W_i'$ bounded. Using the local normal form of submersions, 
we assume that $v \circ \varphi_i^{-1} (z) = z_1>0$. We write $z' = (z_2, \ldots, z_n)$. Let $(V_i)_{i \in I}$ 
be a partition of $U$ by open sets, up to a Lebesgue negligible subset of $U$, such that $\overline{V}_i \subset U_i$ for all $i$. 
We write $W_i := \varphi_i (V_i)$.

\smallskip

We apply Lemma~\ref{lem:LemmeAnalyseFonctionnelle}, with the Banach space $\Bcal = \Bcal^* = \Lbb^2 (\Omega, \mu)$, and:
\begin{align*}
E & = E^* = \\
& \bigcup_{i \in I} \left\{f(x,y,z)=a(x)e^{i \xi y} b (\varphi_i (z)_1) c (\varphi_i (z)'): \ a \in \Lbb^2 (A, \nu), \ \xi \in 2\pi\Z, b, c \in \Ccal_c^1, bc \in \Ccal_c^1 (W_i) \right\}.
\end{align*}
Let us write $d(z):= b(\varphi_i (z)_1) c (\varphi_i (z)')$ for $z \in U_i$.

\smallskip

Let $(p_i)_{i \in I}$ be a sequence of positive numbers such that $\sum_{i \in I} p_i \Leb (W_i) = 1$. 
By Proposition~\ref{prop:MesuresAbsContinues}, without loss of generality, we replace $\tilde{\mu}$ 
by $\hat{\mu} := \sum_{i \in I} p_i \varphi_i^* \Leb_{|W_i}$.

\smallskip

Let $f_j = a_j e^{i \xi_j \cdot} d_j$, with $j \in \{1,2\}$, be in $E$. If the $d_j$ have disjoint support, 
then $\Ebb (\overline{f}_1 \cdot f_2 \circ g_t) = 0 = \Ebb ( \overline{f}_1 \Ebb (f_2| \Ical))$ for all $t$, 
and there is nothing more to prove. We assume without loss of generality that the $h_j$ are supported by 
the same open set $V_i$. Let $h (z_1) := \overline{b}_1 (z_1) b_2 (z_1) \in \Ccal_c^1 (\R_+^*)$. Then, for all $t \geq 0$:
\begin{align*}
\int_\Omega \overline{f}_1 \cdot f_2 \circ g_t \dd \mu 
& = \int_M \overline{d}_1 (z) d_2 (z) \int_A \overline{a}_1 (x) a_2 (T^{\lfloor v(x) t \rfloor} x) \int_0^1 e^{-i \xi_1 y} e^{i \xi_2 (y+v(z)t)} \dd y \dd \nu (x) \dd \hat{\mu} (z) \\
& = \delta_{\xi_1 \xi_2} \int_M \overline{d}_1 (z) d_2 (z) e^{i \xi_1 v(z)t} \int_A \overline{a}_1 (x) a_2 (T^{\lfloor v(z) t \rfloor} x) \dd \nu (x) \dd \hat{\mu} (z) \\
& = \delta_{\xi_1 \xi_2} p_i \int_{W_i} \overline{d}_1 (\varphi_i^{-1} (z)) d_2 (\varphi_i^{-1} (z)) e^{i \xi_1 v (\varphi_i^{-1}(z))t} \int_A \overline{a}_1 (x) a_2 (T^{\lfloor v (\varphi_i^{-1}(z)) t \rfloor} x) \dd \nu (x) \dd z \\
& = \delta_{\xi_1 \xi_2} p_i \int_{\R^{n-1}} \overline{c}_1 (z') c_2 (z') \dd z' \cdot \int_0^{+\infty} h (z_1) e^{i \xi_1 z_1 t} \int_A \overline{a}_1 (x) a_2 (T^{\lfloor z_1 t \rfloor} x) \dd \nu (x) \dd z_1.
\end{align*}

If $\xi_1 \neq \xi_2$, there is nothing more to prove. Assume that $\xi_1 = \xi_2=:\xi$.  Then:
\begin{align*}
\int_0^{+ \infty} h (z_1) e^{i \xi z_1 t} & \int_A \overline{a}_1 (x) a_2 (T^{\lfloor z_1 t \rfloor} x) \dd \nu (x) \dd z_1 \\
& = \int_A \overline{a}_1 (x) \int_0^ {+ \infty} e^{i \xi z_1 t} a_2 (T^{\lfloor z_1 t \rfloor} x) h (z_1) \dd z_1 \dd \nu (x) \\
& = \int_A \overline{a}_1 (x) \sum_{k=0}^{+\infty} a_2 (T^k x) \int_0^{\frac{1}{t}} e^{i \xi t s} h \left( \frac{k}{t}+s \right) \dd s \dd \nu (x).
\end{align*}
We now distinguish between two cases, depending on whether $\xi=0$ or not.

\medskip
\textsc{Case 1: $\xi \neq 0$.}
\smallskip

In the spirit of Riemann-Lebesgue's lemma, we use an integration by parts to show that the oscillations make the integral decay.
\begin{align*}
\Bigg| \int_0^{+\infty} h (z_1) e^{i \xi z_1 t} & \int_A \overline{a}_1 (x) a_2 (T^{\lfloor z_1 t \rfloor} x) \dd \nu (x) \dd z_1 \Bigg| \\
& = \frac{1}{|\xi| t} \left| \int_A \overline{a}_1 (x) \sum_{k=0}^{+\infty} a_2 (T^k x) \int_0^{\frac{1}{t}} (1-e^{i \xi t s}) h' \left( \frac{k}{t}+s \right) \dd s \dd \nu (x) \right| \\
& \leq \frac{2}{|\xi| t} \int_A |a_1| (x) \sum_{k=0}^{+\infty} |a_2| (T^k x) \int_0^{\frac{1}{t}} |h'| \left( \frac{k}{t}+s \right) \dd s \dd \nu (x) \\
& \leq \frac{2 \norm{a_1}{\Lbb^2} \norm{a_2}{\Lbb^2} \norm{h}{BV}}{|\xi| t}.
\end{align*}
By integrating over $z'$, we get:
\begin{equation*}
\left| \Ebb_\mu ( \overline{f}_1 \cdot f_2 \circ g_t ) \right| 
\leq \frac{2 \norm{a_1}{\Lbb^2} \norm{a_2}{\Lbb^2} \norm{c_1}{\Lbb^2} \norm{c_1}{\Lbb^2} \norm{h}{BV}}{|\xi| t} 
\to_{t \to + \infty} 0.
\end{equation*}
But $\Ebb_\mu (f_2|\Ical) = 0$, so the integral converges to $\Ebb_\mu (\overline{f}_1 \Ebb_\mu (f_2|\Ical))$.

\medskip
\textsc{Case 2: $\xi = 0$.}
\smallskip

In this case,
\begin{align}
\int_0^{+ \infty} h (z_1) e^{i \xi z_1 t} & \int_A \overline{a}_1 (x) a_2 (T^{\lfloor z_1 t \rfloor} x) \dd \nu (x) \dd z_1 \nonumber \\
& = \int_A \overline{a}_1 (x) \sum_{k=0}^{+\infty} a_2 (T^k x) \frac{1}{t} h \left( \frac{k}{t} \right) \dd \nu (x) + O ( t^{-1}), \label{eq:ApparitionSommeBirkhoff}
\end{align}
where:
\begin{equation*}
|O (t^{-1})| 
\leq \frac{\norm{a_1}{\Lbb^2} \norm{a_2}{\Lbb^2} \norm{h}{\Ccal^1}}{t}.
\end{equation*}

By von Neumann's ergodic theorem, 
\begin{align*}
\lim_{t \to + \infty} \sum_{k=0}^{+\infty} a_2 (T^k x) \frac{1}{t} h \left( \frac{k}{t} \right) 
& = \int_0^{+\infty} h (z_1) \dd z_1 \cdot \lim_{n \to + \infty} \frac{1}{N} \sum_{k=0}^{N-1} a_2 \circ T^k \\
& = \int_0^{+\infty} h (z_1) \dd z_1 \cdot \Ebb_\nu (a_2 | \Ical_A),
\end{align*}
where the convergence is in $\Lbb^2$ norm. Hence,
\begin{align*}
\lim_{t \to + \infty} \int_0^{+ \infty} h (z_1) e^{i \xi z_1 t} & \int_A \overline{a}_1 (x) a_2 (T^{\lfloor z_1 t \rfloor} x) \dd \nu (x) \dd z_1 \\
& = \int_0^{+\infty} h (z_1) \dd z_1 \cdot \int_A \overline{a}_1 \Ebb_\nu (a_2 | \Ical_A) \dd \nu,
\end{align*}
so that:
\begin{align*}
\lim_{t \to + \infty} \Ebb_\mu ( \overline{f}_1 \cdot f_2 \circ g_t ) 
& = p_i \int_M \overline{d}_1 d_2 \dd \varphi_i^* \Leb \cdot \int_A \overline{a}_1 \Ebb_\nu (a_2 | \Ical_A) \dd \nu \\
& = p_i \int_M \overline{d}_1 \Ebb_{\hat{\mu}} (d_2 | \Bcal_M) \dd \varphi_i^* \Leb \cdot \int_A \overline{a}_1 \Ebb_\nu (a_2 | \Ical_A) \dd \nu \\
& = \Ebb_\mu ( \overline{f}_1 \Ebb_\mu (f_2 | \Ical)). \qedhere
\end{align*}
\end{proof}

Since the sufficient criterion in Theorem~\ref{thm:CisaillementSuspensions} is the same 
as in Theorem~\ref{thm:ClassiquePrincipal}, genericity follows (as for Proposition~\ref{prop:BaireGenericite}):

\begin{corollary}\quad

Let $(A, \nu, T)$ be a system preserving a probability measure, $M$ a $n$-dimensional manifold (with $n \geq 1$). 
Let $r \in [1, + \infty]$. For $v \in \Ccal^r (M, \R_+^*)$, let $(\Omega, (g_t^v)_{t \geq 0})$ be defined as above.

\smallskip

For $\Ccal^r$ generic roof functions $v$, the system $(\Omega, \mu, (g_t^v)_{t \geq 0})$ exhibits Keplerian shear 
for any absolutely continuous compatible measure $\mu$.
\end{corollary}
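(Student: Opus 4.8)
The plan is to mirror the genericity argument already carried out in Proposition~\ref{prop:BaireGenericite}, since the sufficient criterion in Theorem~\ref{thm:CisaillementSuspensions} is exactly $\Leb(\de v = 0) = 0$, which is formally the scalar case ($\xi$ replaced by a single direction, and $\langle \xi, v\rangle$ replaced by $v$ itself) of the criterion appearing in Theorem~\ref{thm:ClassiquePrincipal}. The topology on $\Ccal^r(M,\R_+^*)$ is the usual one: fixing a locally finite cover $(U_i,\varphi_i)_{i\in I}$ of $M$ with trivializing charts and a subordinated cover by compacts $(K_i)_{i\in I}$, a sequence $(v_n)$ converges to $v$ if all derivatives up to order $r$ of $v_n\circ\varphi_i^{-1}$ converge uniformly on $\varphi_i(K_i)$ for each $i$; this makes $\Ccal^r(M,\R_+^*)$ a Baire space. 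It suffices to exhibit a Baire generic set of roof functions satisfying the hypothesis of Theorem~\ref{thm:CisaillementSuspensions}.

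First I would reduce the global condition to countably many local conditions. Since $M$ is covered by the compacts $(K_i)$, we have $\Leb(\{\de v = 0\}) = 0$ as soon as $\Leb(\{\de v = 0\}\cap K_i) = 0$ for every $i$, so it is enough to prove that each
\begin{equation*}
A_i := \{v \in \Ccal^r (M, \R_+^*): \ \Leb (\{\de v =0\} \cap K_i) = 0\}
\end{equation*}
is Baire generic, and then intersect over the countable family $I$. Exactly as in Proposition~\ref{prop:BaireGenericite}, I would write the complement $A_i^c = \bigcup_{n\geq 1}\bigcap_{m\geq 1} B_{i,n,m}$, where $B_{i,n,m}$ is the set of $v$ for which $\Leb(\{\|\de(v\circ\varphi_i^{-1})\|\leq 1/m\}\cap\varphi_i(K_i)) \geq 1/n$, and observe that each $B_{i,n,m}$ is closed (by the same inner-regularity plus compactness argument: a function bounded away from the degenerate set on a compact forces all nearby functions to do the same). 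The task then collapses to showing that each $\bigcap_{m\geq 1} B_{i,n,m}$ has empty interior, i.e.\ that any $v$ can be perturbed, arbitrarily $\Ccal^r$-small, out of $B_{i,n,m}$ for suitable large $m$.

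The key perturbation step is where I expect the only genuine work, though it is if anything simpler than in the vector-valued case. Fix $i$ and $n$, pick a bump function $\chi_i\in\Ccal^r$ with compact support in $U_i$ and $\chi_i\equiv 1$ on $\varphi_i(K_i)$, and for $t\in\R$ set $v(t)\circ\varphi_i^{-1}(x) := v\circ\varphi_i^{-1}(x) + t\, x_1 \chi_i(x)$ on the chart (with the obvious transport to overlapping charts, as in Proposition~\ref{prop:BaireGenericite}). On $\varphi_i(K_i)$ this gives $\de(v(t)\circ\varphi_i^{-1}) = \de(v(0)\circ\varphi_i^{-1}) + t\, e_1^*$, so translating the differential in a fixed direction. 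By the pigeonhole principle, among finitely many values $v(2k/m)$, $0\leq k\leq\lceil n\Leb(\varphi_i(K_i))\rceil$, at least one lands in $B_{i,n,m}^c$; this produces a sequence $t_m\to 0$ with $v(t_m)\in B_{i,n,m}^c$, establishing empty interior. The only point needing a remark is that we must stay inside $\Ccal^r(M,\R_+^*)$, i.e.\ keep $v>0$: since the perturbation $t\,x_1\chi_i$ is uniformly small as $t\to 0$ and $v$ is bounded below on the relevant compact, this positivity is automatic for small $t$, so the construction remains within the admissible class. Once every $A_i$ is shown generic, their countable intersection is generic and any $v$ therein satisfies $\Leb(\de v = 0)=0$; Theorem~\ref{thm:CisaillementSuspensions} then yields Keplerian shear for every absolutely continuous compatible measure, which is the claim. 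The main obstacle, such as it is, is purely bookkeeping: verifying the positivity constraint is preserved and transporting the perturbation across chart overlaps, both of which are routine.
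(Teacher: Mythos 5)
Your proposal is correct and follows essentially the same route as the paper, which proves this corollary precisely by noting that the criterion $\Leb(\de v = 0) = 0$ of Theorem~\ref{thm:CisaillementSuspensions} is the scalar instance of the criterion in Theorem~\ref{thm:ClassiquePrincipal} and rerunning the Baire argument of Proposition~\ref{prop:BaireGenericite} (closed sets $B_{i,n,m}$, perturbation $v + t x_1 \chi_i$, pigeonhole). Your extra remark that the perturbation stays in $\Ccal^r(M,\R_+^*)$ for small $t$ is a worthwhile detail the paper leaves implicit; note also that, since the roof function is a genuinely global scalar function rather than a section subject to the $\GL_d(\Z)$ holonomy, no transport across chart overlaps is needed at all.
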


%
%
%
%
%
%
%
%

We shall not discuss the speed of decay of correlations for such systems: not only do the 
critical points of $v$ matter, so do the decay of correlations on $(A, \nu, T)$.

\section{Systems without Keplerian shear}
\label{sec:SansMelange}

While systems with Keplerian shear are abundant in the classes we discussed -- since the conditions 
in Theorems~\ref{thm:ClassiquePrincipal} and~\ref{thm:CisaillementSuspensions} are generic --, 
we shall finish with a couple of examples of non-ergodic systems without Keplerian shear. 
The first is the geodesic flow on the sphere, which falls in the setting of Section~\ref{sec:FibreBundle} 
but lacks asynchronicity; the second is given by a large class of $p$-adic translations.

\subsection{Geodesic flow on a sphere}
\label{subsec:FlotTS}

Let $n \geq 2$. The manifold $T^1 \Sbb_n$ is a fibre bundle over the oriented Grassmannian 
$\widetilde{Gr} (2, n+1)$ with fibre $\Sbb_1$. This comes from the fact that the orbits of the geodesic 
flow on this manifold are oriented grand circles, and the space of oriented grand circle is isomorphic 
to the space of oriented $2$-planes in $\R^{n+1}$. The geodesic flow acts by translations on 
the grand circles. Hence the dynamical system $(T^1 \Sbb_n, \Liouv, (g_t))$ belongs to the class 
of examples discussed in Section~\ref{sec:FibreBundle}. The invariant $\sigma$-algebra $\Ical$ is 
isomorphic to $\Bcal_{\widetilde{Gr} (2, n+1)}$, and thus non trivial.

\smallskip

However, all grand circles are of the same length, so $g_{t+2\pi} = g_t$. In particular, 
given any integrable function $h$ which is not $\Ical$-measurable, the sequence of functions $(h \circ g_t)_t$ 
cannot converge to a $(g_t)$-invariant function.

\smallskip

Finally, the geodesic flow on $T^1 \Sbb_1$ is isomorphic to the disjoint union of two rotations 
on $\Sbb_1$, which are ergodic but not mixing. Hence, the system $(T^1 \Sbb_n, \Liouv, (g_t))$ 
does not have Keplerian shear for any $n \geq 1$.

\subsection{$p$-adic translations}
\label{subsec:PAdique}

Until now, we have seen classes of dynamical systems for which Keplerian shear is generic, 
with the geodesic flow on $T^1 \Sbb_n$ being an exception rather than the rule. As we shall 
see now, the situation is completely different for $p$-adic translations. 
Recall that, for $p$ a prime number, the ring $\Z_p$ is the completion of $\Z$ for the $p$-adic 
norm. It is compact, and thus supports an invariant probability, which we shall denote $\Leb$.

\smallskip

We shall see that, when one replaces translations on a torus by translations on $\Z_p$, 
the system they get typically does not exhibit Keplerian shear. The reason is that, on $\Z_p$, 
errors do not accumulate: if we change a translation on $\Z_p$ by a small quantity, 
the iterates of the two translations still stay close one to another at all times.

\begin{proposition}\quad

Let $p$ be a prime number, $d \geq 1$. Let $(M, \nu)$ be a standard probability space. 
Let $v : M \to (\Z_p)^d$ be measurable. Let:
\begin{equation*}
T : \left\{
\begin{array}{lll}
M \times (\Z_p)^d & \to & M \times (\Z_p)^d \\
(x,y) & \mapsto & (x,y+v(x))
\end{array}
\right. .
\end{equation*}
Then $(M \times \Z_p, \nu \otimes \Leb, T)$ exhibits Keplerian shear if and only if $v \equiv 0$ 
almost everywhere.
\end{proposition}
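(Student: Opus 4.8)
The plan is to treat the two implications separately, the forward one being immediate and the converse carrying all the content. If $v \equiv 0$ almost everywhere, then $T$ agrees with the identity on a set of full measure, so $\Ical$ is the whole $\sigma$-algebra, $\Ebb_\mu(f | \Ical) = f$, and $f \circ T^n = f = \Ebb_\mu(f|\Ical)$ for every $f$; Keplerian shear then holds trivially. For the converse I would argue by contraposition: assuming $v \not\equiv 0$, I would exhibit a single observable whose correlations fail to converge.

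The structural fact driving everything is that the Pontryagin dual $\widehat{(\Z_p)^d} \cong (\Q_p/\Z_p)^d$ is a countable torsion group: every nontrivial character $\chi$ has finite order $N$ (a power of $p$), so $\chi(z)^N = 1$ for all $z \in (\Z_p)^d$. This is exactly what fails on the torus, where a character evaluated at $v(x)$ gives $e^{2\pi \ii \langle \xi, v(x)\rangle}$, a quantity whose powers equidistribute and, after integration in $x$, decay by Riemann--Lebesgue. Concretely, since $\{v \neq 0\}$ has positive $\nu$-measure and characters separate points, I would write $\{v \neq 0\} = \bigcup_{\chi \neq 1}\{x : \chi(v(x)) \neq 1\}$; this being a countable union, at least one nontrivial $\chi$ has $E := \{x : \chi(v(x)) \neq 1\}$ of positive measure. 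I then set $f(x,y) := \mathbf{1}_E(x)\, \chi(y)$.

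The computation is short. Because $\chi$ is a character, $f \circ T^n(x,y) = \mathbf{1}_E(x)\, \chi(v(x))^n\, \chi(y)$, so that, after integrating out $y$ (which contributes $\int_{(\Z_p)^d} |\chi|^2 \dd \Leb = 1$), one gets $\Ebb_\mu(\overline{f}\cdot f\circ T^n) = \int_E \chi(v(x))^n \dd \nu(x)$. Letting $N$ be the order of $\chi$, along the subsequence $n \in N\Z$ one has $\chi(v(x))^n = 1$ identically, hence this correlation equals $\nu(E) > 0$ for all such $n$. On the other hand, by von Neumann's ergodic theorem $\Ebb_\mu(f | \Ical)$ is the $\Lbb^2$-limit of the Cesàro averages $\frac{1}{N'}\sum_{n=0}^{N'-1} f \circ T^n$; since for every $x \in E$ the number $\chi(v(x))$ is a nontrivial root of unity, the fibrewise averages $\frac{1}{N'}\sum_{n=0}^{N'-1}\chi(v(x))^n$ tend pointwise to $0$ and are bounded by $1$, so dominated convergence gives $\Ebb_\mu(f|\Ical) = 0$. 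Thus $\Ebb_\mu(\overline{f}\cdot f\circ T^n)$ does not converge to $\Ebb_\mu(\overline{f}\cdot \Ebb_\mu(f|\Ical)) = 0$, and Keplerian shear fails.

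The main obstacle, and the only place requiring care, is pinning down the limit object $\Ebb_\mu(f|\Ical)$ without first describing the invariant $\sigma$-algebra explicitly (which would involve the fibrewise closed subgroups $\overline{\langle v(x)\rangle} \subseteq (\Z_p)^d$). Computing the Cesàro average directly sidesteps this. One should also dispatch the harmless measurability and integrability points: that $x \mapsto \chi(v(x))$ is measurable, being the composition of the measurable $v$ with the continuous $\chi$, so that $E$ is measurable and $f \in \Lbb^2$.
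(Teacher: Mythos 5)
Your proof is correct, but it takes a genuinely different route from the paper's. The paper works through the $p$-adic digit expansion: since $v \not\equiv 0$, a countable decomposition by coordinate, valuation and leading digit yields a set $A$ of positive measure, an index $i$, an integer $N \geq 0$ and a digit $k \in \{1, \ldots, p-1\}$ with $v_i(x) = kp^N + \ell(x)p^{N+1}$ on $A$; the observable $f(x,y) = \mathbf{1}_A(x)\chi(y_{N,i})$, with $\chi$ a nontrivial character of $\Z/p\Z$ and $y_{N,i}$ the $N$-th digit of $y_i$, then satisfies the exact eigenfunction relation $f \circ T^n = \chi(k)^n f$, so $(f \circ T^n)_{n \geq 0}$ is exactly $p$-periodic and non-constant, and weak convergence fails immediately, with no need to identify $\Ebb_\mu(f|\Ical)$ or to invoke any ergodic theorem. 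You instead select the character abstractly, via Pontryagin duality and the fact that the dual of $(\Z_p)^d$ is countable and torsion; the price is that your phase $\chi(v(x))$ genuinely varies with $x$, so your $f$ is not an eigenfunction, and you must identify the limit $\Ebb_\mu(f|\Ical) = 0$ separately (which you do correctly, via von Neumann plus dominated convergence) before exhibiting the subsequence $n \in N\Z$ along which the correlation sticks at $\nu(E) > 0$. What your version buys is generality: it transfers verbatim to translations on any compact abelian group whose dual is countable and torsion (products of $\Z_{p_j}$ over different primes, or more general profinite abelian groups), whereas digit manipulation is specific to $\Z_p$. What the paper's version buys is concreteness: an explicit eigenfunction and the sharpest conclusion (exact $p$-periodicity of the orbit of $f$). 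A small simplification available to you: since $\chi^N = 1$, your sequence $f \circ T^n$ is $N$-periodic, and it is not constant because $f \circ T - f = \mathbf{1}_E(x)\,\chi(y)\,(\chi(v(x))-1)$ is nonzero in $\Lbb^2$; as a periodic sequence converges weakly only if it is constant, this would let you skip the von Neumann step entirely.
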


\begin{proof}

If $v \equiv 0$ almost everywhere, then $T$ is essentially the identity, which has Keplerian shear. 
Assume that this is not the case. Then one can find $A \subset M$, $N \geq 0$, $i \in \{1, \ldots, d\}$ 
and $k \in \{1, \ldots, p-1\}$ such that $\nu(A) >0$ and $v_i (x) = k p^N+\ell(x) p^{N+1}$ for all $x \in A$.

\smallskip

Let $\chi$ be a non-trivial character on $\Z_{/p\Z}$. Let 
\begin{equation*}
f : \left\{
\begin{array}{lll}
A \times (\Z_p)^d & \to & \C \\
\left(x,\left(\sum_{\ell \geq 0} y_{\ell,i} p^\ell\right)_{1 \leq i \leq d} \right) & \mapsto & \chi(y_{N,i})
\end{array}
\right. ,
\end{equation*}
Then, for $(x,y) \in A \times (\Z_p)^d$,
\begin{equation*}
f \circ T^n (x,y) 
= \chi(y_{N,i}+nk) 
= \chi(y_{N,i})\chi(k)^n. 
\end{equation*}
The function $f$ is non-zero on a set of positive measure, and since $\chi(k)$ is a non-trivial 
$p$th root of the unit, we get that $(f \circ T^n)_{n \geq 0}$ is exactly $p$-periodic. Hence, 
the system $(M \times \Z_p, \nu \otimes \Leb, T)$ does not exhibit Keplerian shear.
\end{proof}

\end{document}